 \newcommand{\Z}{\ensuremath{\mathbb{Z}}}
 \newcommand{\N}{\ensuremath{\mathbb{N}}}
 \newcommand{\R}{\ensuremath{\mathbb{R}}}
\setlist[enumerate,1]{label=(\roman*)}
\newcommand{\norm}[1]{\left\lVert#1\right\rVert}
\newcommand{\pnorm}[1]{\left|#1\right|}
\newcommand{\norminf}[1]{\left|#1\right|_\infty}
\newcommand{\eqd}{=_d}
\newcommand{\holsp}[1][\eta]{\mathcal{C}^{#1}}
\newcommand{\Ex}[2][]{\mathbb{E}_{#1\!\!}\left[#2\right]}
\newcommand{\indic}[1]{\mathbbm{1}\left\{#1\right\}}
\newcommand{\indicd}[1]{\mathbbm{1}_{#1}}
\newcommand{\bbX}{\ensuremath{\mathbb{X}}}
\newcommand{\bbW}{\ensuremath{\mathbb{W}}}
\newcommand{\bbS}{\ensuremath{\mathbb{S}}}
\newcommand{\tf}{T}
\newcommand{\ms}{M}
\newcommand{\conv}{\rightarrow}
\newcommand{\eps}{\varepsilon}
\newcommand{\wconv}{\ensuremath{\overset{w}{\longrightarrow}}}
\newcommand{\leb}{\mathrm{Leb}}
\newcommand{\holder}{H\"{o}lder}
\newcommand{\ito}{It\^{o}}
\newcommand{\cadlag}{c{\`a}dl{\`a}g}
\newcommand{\map}[2]{\colon#1\rightarrow#2}
\newcommand{\Lip}{\mathrm{Lip}}
\newtheorem{prop}{Proposition}[section]
\newtheorem{lemma}[prop]{Lemma}
\newtheorem{theorem}[prop]{Theorem}
\newtheorem{defn}[prop]{Definition}
\newtheorem{exa}[prop]{Example}
\theoremstyle{remark}
\newtheorem{remark}[prop]{Remark}
\let\save@mathaccent\mathaccent
\newcommand*\if@single[3]{%
	\setbox0\hbox{${\mathaccent"0362{#1}}^H$}%
	\setbox2\hbox{${\mathaccent"0362{\kern0pt#1}}^H$}%
	\ifdim\ht0=\ht2 #3\else #2\fi
}
\newcommand*\rel@kern[1]{\kern#1\dimexpr\macc@kerna}
\newcommand*\widebar[1]{\@ifnextchar^{{\wide@bar{#1}{0}}}{\wide@bar{#1}{1}}}
\newcommand*\wide@bar[2]{\if@single{#1}{\wide@bar@{#1}{#2}{1}}{\wide@bar@{#1}{#2}{2}}}
\newcommand*\wide@bar@[3]{%
	\begingroup
	\def\mathaccent##1##2{%
		\let\mathaccent\save@mathaccent
		\if#32 \let\macc@nucleus\first@char \fi
		\setbox\z@\hbox{$\macc@style{\macc@nucleus}_{}$}%
		\setbox\tw@\hbox{$\macc@style{\macc@nucleus}{}_{}$}%
		\dimen@\wd\tw@
		\advance\dimen@-\wd\z@
		\divide\dimen@ 3
		\@tempdima\wd\tw@
		\advance\@tempdima-\scriptspace
		\divide\@tempdima 10
		\advance\dimen@-\@tempdima
		\ifdim\dimen@>\z@ \dimen@0pt\fi
		\rel@kern{0.6}\kern-\dimen@
		\if#31
		\overline{\rel@kern{-0.6}\kern\dimen@\macc@nucleus\rel@kern{0.4}\kern\dimen@}%
		\advance\dimen@0.4\dimexpr\macc@kerna
		\let\final@kern#2%
		\ifdim\dimen@<\z@ \let\final@kern1\fi
		\if\final@kern1 \kern-\dimen@\fi
		\else
		\overline{\rel@kern{-0.6}\kern\dimen@#1}%
		\fi
	}%
	\macc@depth\@ne
	\let\math@bgroup\@empty \let\math@egroup\macc@set@skewchar
	\mathsurround\z@ \frozen@everymath{\mathgroup\macc@group\relax}%
	\macc@set@skewchar\relax
	\let\mathaccentV\macc@nested@a
	\if#31
	\macc@nested@a\relax111{#1}%
	\else
	\def\gobble@till@marker##1\endmarker{}%
	\futurelet\first@char\gobble@till@marker#1\endmarker
	\ifcat\noexpand\first@char A\else
	\def\first@char{}%
	\fi
	\macc@nested@a\relax111{\first@char}%
	\fi
	\endgroup
}
\title{Functional correlation bounds and deterministic homogenisation of fast-slow systems}
\author{Nicholas Fleming-V\'{a}zquez\thanks{Mathematics Institute, University of Warwick, Coventry, CV4 7AL, UK.}}
\date{}
\numberwithin{equation}{section}
\newcommand{\shat}[1]{\vphantom{#1}\smash[t]{\widehat{#1}}}
\newcommand{\stilde}[1]{\vphantom{#1}\smash[t]{\widetilde{#1}}}
\newcommand{\fcb}{Functional Correlation Bound}
\newcommand{\sepdholsp}[1]{\mathcal{H}^\eta_{#1}}
\newcommand{\dsemi}[1]{[#1]_{\eta}}
\newcommand{\sdsemi}[2]{[#1]_{\eta,#2}}
\newcommand{\dhnorm}[1]{\norm{#1}_\eta}
\begin{document}
	\maketitle
		\begin{abstract}
		We give elementary and explicit sufficient conditions (in particular, a functional correlation bound) for deterministic homogenisation (convergence to a stochastic differential equation) for discrete-time fast-slow systems of the form 
		\[ x_{k+1}=x_k+n^{-1}a_n(x_k,y_k)+n^{-1/2}b_n(x_k,y_k), \quad y_{k+1}=T_n y_k. \]
		We then prove that these sufficient conditions are satisfied by various examples of nonuniformly hyperbolic dynamical systems $ T_n $.
	\end{abstract}
\section{Introduction}
Let $ \ms $ be a metric space and let $ T:M\to M $ be a transformation preserving an ergodic Borel probability measure $ \mu $. Consider a fast-slow system on $ \R^d \times M $ of the form 
\begin{equation}\label{eq:fast_slow_eqn_single}
	x^{(n)}_{k+1}=x^{(n)}_k+n^{-1}a(x^{(n)}_k,y_k)+n^{-1/2}b(x^{(n)}_k,y_k), \qquad y_{k+1}=T y_k
\end{equation}
where $ x^{(n)}_0 \equiv \xi $ is fixed and $ y_0 $ is drawn randomly from $ (M,\mu) $. Assume that $ \int_M b(x,y)d\mu(y)=0 $ for all $ x\in \R^d $. If the fast dynamics $ T:M\to M $ is chaotic enough, then we expect the stochastic process $ X_n(t)=x^{(n)}_{[nt]} $ to converge weakly to the solution of a stochastic differential equation driven by Brownian motion. This is referred to as \textit{(deterministic) homogenisation} and has attracted a great deal of interest recently \cite{dolgopyat2004limit,kelly2016smooth,kelly2017dethom,korepanov2022deterministic,chevyrev2022deterministic}.

Kelly \& Melbourne~\cite{kelly2016smooth} used rough path theory to show that homogenisation reduces to showing certain statistical properties for the fast dynamics $ T:M\to M $. The required statistical properties were recently relaxed in~\cite{chevyrev2022deterministic}. In~\cite{melbourne2016note,korepanov2022deterministic,fleming2022} these statistical properties were verified for a large class of nonuniformly hyperbolic maps.

Homogenisation results have many interesting physical applications (cf.\!~\cite[Sect.~11.8]{pavliotis_stuart}), particularly in stochastic climate theory~\cite{stochastic_climate_theory_17}. As such, it is desirable to find a sufficient condition for homogenisation that is accessible to a broad audience.

For many classes of chaotic dynamical systems (particularly ones with some hyperbolicity), it is possible to prove bounds on the correlations 
\[ C_n(v,w)=\int_M v\,w\circ T^n d\mu - \int_M v\, d\mu\int_M w \,d\mu   \]
for all \holder{} $ v$, $w:M\to \R $. Moreover, correlation bounds play a crucial role in most standard proofs of the central limit theorem, so it is natural to seek a sufficient condition for homogenisation in terms of decay of correlations.

Fast decay of the autocorrelations $ C_n(v,v) $ is not enough to guarantee that $ v:M\to \R $ satisfies the central limit theorem, see~\cite{griffin_marklof} for a counterexample. More generally,  fast decay of correlations for \holder{} observables is not thought to be sufficient to prove the central limit theorem.

Let $ v=(v_1,\dots,v_d)\in L^\infty(M,\R^d) $ with $ \int v\, d\mu =0 $. Suppose that there exists a sequence $ a_n>0 $ such that $ \sum_{n\ge 1}a_n^{1/3}<\infty $ and 
\begin{equation}\label{eq:gordin_criterion}
	\big|C_n(v_i,w)\big|\le a_n\pnorm{w}_\infty \text{ for all } w\in L^\infty(M), 1\le i\le d. 
\end{equation}
Then $ v $ satisfies the statistical properties required by \cite{kelly2016smooth}. However, if $ T $ is invertible, then this condition fails whenever $ v\ne 0 $. We now give a generalised correlation condition that suffices to prove homogenisation and can be satisfied by both invertible and noninvertible maps.

Fix $ \eta\in (0,1] $ and let $ v:M\to \R $. Let $ \dsemi{v}=\sup_{x\ne y}|v(y)-v(x)|/d(x,y)^\eta $ denote the $ \eta $-H\"{o}lder seminorm of $ v $. Let $ q\ge 1 $. For $ G\map{\ms^{q}}{\R} $ and $ 0\le i< q$ we denote
	\[\sdsemi{G}{i} =\sup_{x_0,\dots,x_{q-1}\in \ms}\dsemi{G(x_0,\dots,x_{i-1},\cdot,x_{i+1},\dots,x_{q-1})}. \]
	We call $ G $ \textit{separately $ \eta $-\holder{}}, and write $ G\in \sepdholsp{q}(\ms)$, if $ \norminf{G}+\sum_{i=0}^{q-1} \sdsemi{G}{i}<\infty. $

Note that $\sepdholsp{1}(\ms)=\holsp(\ms) $ is the space of $ \eta $-\holder{} observables.

Let $\gamma>0$. We consider dynamical systems which satisfy the following type of correlation bound:
\begin{defn}
	Let $ \tf\map{\ms}{\ms} $ be a dynamical system with ergodic invariant probability measure $ \mu $. Suppose that there exists a constant $C>0$ such that for all integers $ 0\le p< q,\ 0\le k_0\le \cdots\le k_{q-1} $,
	\begin{align}
		&\bigg|\int_\ms  G(\tf^{k_0}x,\dots,\tf^{k_{q-1}}x)d\mu(x)\nonumber\\
		&\qquad\qquad\qquad -\int_{\ms^2} G(\tf^{k_0}x_0,\dots,\tf^{k_{p-1}}x_0,\tf^{k_p}x_1,\dots,\tf^{k_{q-1}} x_1)d\mu(x_0)d\mu(x_1)\bigg|	\nonumber\\
		&\qquad\le C(k_p-k_{p-1})^{-\gamma}\biggl(\norminf{G}+\sum_{i=0}^{q-1} \sdsemi{G}{i}\biggr)	\label{eq:fcb_bd}	
	\end{align}
	for all $G\in \sepdholsp{q}(\ms)$. Then we say that $\tf$ satisfies the Functional Correlation Bound with rate $k^{-\gamma}$.
\end{defn}
Conditions of this kind were introduced by Lepp\"{a}nen in~\cite{leppanen2017functional} and were further studied in~\cite{leppanen2017billiards,leppanen2020sunklodas}. In \cite{fleming2022}, the author showed that the \fcb{} is satisfied by nonuniformly hyperbolic maps modelled by Young towers with polynomial tails. In particular, this covers many well-known examples such as Axiom A attractors, H\'enon maps with Benedicks-Carleson parameters and intermittent interval maps. Our first main result is that homogenisation holds whenever the Functional Correlation Bound is satisfied with a fast enough rate. Let $ D([0,1],\R^d) $ denote the Skorohod space with uniform topology.
\begin{theorem}\label{thm:homogenisation_single_map}Suppose that $ T $ satisfies the Functional Correlation Bound with rate $ k^{-\gamma} $, $ \gamma>1 $. Let $ a,b:\R^d \times M\to \R^d $ be regular. Then $ X_n \wconv X$ in $ D([0,1],\R^d) $, where $ X $ is an It\^{o} diffusion.
\end{theorem}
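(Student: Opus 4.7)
The plan is to deduce Theorem~\ref{thm:homogenisation_single_map} by feeding the Functional Correlation Bound into the rough-path homogenisation framework of Kelly--Melbourne~\cite{kelly2016smooth}, as refined in~\cite{chevyrev2022deterministic}. That framework reduces the weak convergence $X_n\wconv X$ to three statistical inputs for the fast dynamics $\tf$: first, a weak invariance principle for the centred path $W_n(t)=n^{-1/2}\sum_{k=0}^{[nt]-1}b(\xi,\cdot)\circ\tf^k$ (together with the \holder{} components appearing in $a$), jointly with its iterated integrals $\bbW_n(s,t)=\int_s^t (W_n(r)-W_n(s))\otimes dW_n(r)$, towards an enhanced Brownian motion with an appropriate It\^{o} correction; second, uniform moment/tightness estimates on the enhanced process $(W_n,\bbW_n)$ in a suitable $p$-variation or $\alpha$-\holder{} topology with $\alpha\in(1/3,1/2)$; third, continuity of the associated rough differential equation solution map, which is standard once $a,b$ are sufficiently smooth. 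Once these inputs are established, the continuous mapping theorem delivers the claim.

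The natural starting point is to apply the \fcb{} with $q=2$ and $G(x_0,x_1)=v(x_0)w(x_1)$, which yields the two-point bound $|C_k(v,w)|\le Ck^{-\gamma}\dhnorm{v}\dhnorm{w}$ for $v,w\in\holsp(\ms)$. Since $\gamma>1$, this is summable and furnishes, via a standard Gordin-type martingale--coboundary decomposition, the one-dimensional CLT with limiting variance $\sigma^2=\sum_{k\in\Z}\int v\cdot v\circ\tf^k\,d\mu$ for any centred $v\in\holsp(\ms)$. To upgrade this to the functional CLT for $W_n$ I would control finite-dimensional distributions by applying the \fcb{} with $q\ge 3$ and test functions depending on several time marginals, and I would obtain the Rosenthal/moment bounds needed for tightness by applying the \fcb{} with $q$ as large as $2p$ to estimate $\Ex{|W_n(t)-W_n(s)|^{2p}}$ for $p$ large enough.

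The central new step is convergence of the iterated integrals, which in discrete form read $\bbW_n(s,t)=n^{-1}\sum_{ns\le j<k\le nt}v\circ\tf^j\otimes v\circ\tf^k$ plus a negligible boundary correction. Treating the summand as a separately-\holder{} function of four variables and applying~\eqref{eq:fcb_bd} with $q=4$, $p=2$ at a well-chosen split index decouples the "early" and "late" pairs at cost $(k_p-k_{p-1})^{-\gamma}$; summing over gaps converges because $\gamma>1$, and identifies the It\^{o} correction $E=\sum_{k\ge 1}\int v\otimes v\circ\tf^k\,d\mu$ in the limit. Joint finite-dimensional convergence of $(W_n,\bbW_n)$ then follows by combining these estimates with the one-dimensional FCLT.

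The main obstacle I anticipate is the tightness of $\bbW_n$ in the $\alpha$-\holder{} (or $p$-variation) topology required by rough path theory, which demands a Kolmogorov-type estimate of the form $\Ex{|\bbW_n(s,t)|^{p}}\le C|t-s|^{p/2}$ for some $p>2/\alpha$. Establishing such a bound reduces to expanding $|\bbW_n(s,t)|^{p}$ as a $2p$-fold sum over ordered time indices and iteratively decoupling the longest gap between consecutive indices via the \fcb{} with $q=2p$, using $\gamma>1$ to sum the resulting gap weights. The combinatorial bookkeeping for these high-order decouplings, and the verification that the \fcb{} test functions one obtains genuinely lie in $\sepdholsp{q}(\ms)$ after localisation of $a,b$ in the slow variable, will be the most delicate part of the argument. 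With tightness and joint convergence in hand, continuity of the RDE solution map closes the proof and identifies $X$ as the stated It\^{o} diffusion.
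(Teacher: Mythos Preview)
Your high-level reduction to the framework of~\cite{chevyrev2022deterministic} matches the paper, and your identification of the iterated WIP together with iterated moment bounds as the two statistical inputs is correct. However, your route to the WIP contains a genuine gap.

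You invoke a ``standard Gordin-type martingale--coboundary decomposition'' to pass from summable two-point correlations to the CLT/WIP. This does not follow from the \fcb{}. The bound $|C_k(v,w)|\le Ck^{-\gamma}\dhnorm{v}\dhnorm{w}$ holds only for \holder{} $w$, not for $w\in L^\infty$, so the condition~\eqref{eq:gordin_criterion} that drives the usual Gordin decomposition is unavailable. For invertible $T$ this obstruction is essential: the Koopman operator is an $L^2$-isometry, so no $L^2$-decay of conditional expectations can occur, and the paper explicitly notes that~\eqref{eq:gordin_criterion} fails for every nonzero $v$. The \fcb{} is introduced precisely to bypass this, and your plan does not explain how a coboundary is to be produced from \holder{}-against-\holder{} decay alone.

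The paper instead proves the iterated WIP (Theorem~\ref{thm:iterated_wip}) directly, without any martingale approximation, via Bernstein's big block--small block method. The key device is the weak-dependence Lemma~\ref{lemma:fcb_weak_dep_multidim}: applying the \fcb{} to the composite test function $F\circ\pi_k$ built from big-block sums, one compares the joint law of the block contributions $(X_i,\bbX_i)$ to that of \emph{independent} copies, the error being governed by the small-block gaps. The iterated WIP for the independent array then follows from a Lindeberg-type result (Lemma~\ref{lemma:lindeberg_wip}). This replaces both your Gordin step and your proposed $q=4$ decoupling argument for $\bbW_n$.

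On tightness: the paper does not prove \holder{}-topology tightness of $(W_n,\bbW_n)$ directly. It establishes only \emph{finite-dimensional} convergence in Theorem~\ref{thm:iterated_wip} and obtains the iterated moment bounds (Lemma~\ref{lemma:moment_bds}, quoted from~\cite{fleming2022}) separately; both are then fed into~\cite[Theorem~2.17]{chevyrev2022deterministic}, which encapsulates the Kolmogorov/rough-path machinery. So the ``main obstacle'' you anticipate is sidestepped rather than confronted head-on.
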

We refer to~\cite[Sect.~2]{chevyrev2022deterministic} for a precise statement of the regularity requirements on $ a $ and $ b $, in addition to explicit expressions for the drift and diffusion coefficients of the stochastic differential equation satisfied by $ X $. 

The condition $ \gamma > 1$ in Theorem~\ref{thm:homogenisation_single_map} is sharp. For $ \gamma\le 1 $ there are examples where the central limit theorem fails for generic H\"{o}lder observables (see Example~\ref{exa:intermittent_baker} for more details).
\subsection{More general fast-slow systems}
Until now we have assumed that the fast direction $ y_k $ in \eqref{eq:fast_slow_eqn_single} is independent of $ n $. Let $ T_n:M\to M $ be a family of maps with invariant probability measures $ \mu_n $. We generalise~\eqref{eq:fast_slow_eqn_single} by considering fast-slow systems on $ \R^d\times M $ of the form
\begin{equation}\label{eq:fast_slow_family}
	x^{(n)}_{k+1}=x^{(n)}_k+n^{-1}a_n(x^{(n)}_k,y^{(n)}_k)+n^{-1/2}b_n(x^{(n)}_k,y^{(n)}_k), \quad y^{(n)}_{k+1}=T_n y^{(n)}_k.
\end{equation}
Here $ x_0^{(n)}\equiv \xi $ is fixed and $ y_0^{(n)} $ is drawn randomly from $ (M,\mu_n) $. The homogenisation problem we are interested in is the same as before, only now $ X_n $ is a stochastic process on $ (M,\mu_n) $. This homogenisation problem was previously considered by~\cite{kkm_martinale_cobdy,korepanov2022deterministic,chevyrev2022deterministic}. In particular, in~\cite{korepanov2022deterministic,chevyrev2022deterministic} this problem was settled for families of nonuniformly expanding maps.


We now give a generalisation of Theorem~\ref{thm:homogenisation_single_map} for families $ T_n $ that satisfy the Functional Correlation Bound in the following uniform sense.

Let $ \tf_n\map{\ms}{\ms}, $ $ n\in \N\cup\{\infty\}, $ be a family of dynamical systems with invariant probability measures $ \mu_n $. Suppose that $ \tf_n $ satisfies the \fcb{} with rate $ k^{-\gamma} $ for each $ n $, and that the constant $ C>0 $ can be chosen independently of $ n $. Then we say that the family $ \tf_n $ \textit{satisfies the \fcb{} uniformly with rate $ k^{-\gamma} $}. Our main result on homogenisation for fast-slow systems of the form \eqref{eq:fast_slow_family} is as follows:
\begin{theorem}\label{thm:homog_from_fcb_family}
	Let $ T_n:M\to M $ be a family of dynamical systems that satisfies the Functional Correlation Bound uniformly with rate $ k^{-\gamma} $, $ \gamma>1 $. Suppose that
\begin{equation}\label{eq:conv_correlation_fns}
	\lim_{n\rightarrow \infty}\int_M vw\circ T_n^j\, d\mu_n=\int_M vw\circ T_\infty^j \,d\mu_\infty
\end{equation}
for all $ j\ge 0 $ and all $ \eta $-H\"{o}lder $ v,\, w:M\to \R $.
	Assume that $ a_n$, $b_n:\R^d\times M\rightarrow \R^d $ satisfy mild regularity conditions. Then $ X_n\wconv X$ in $ D([0,1],\R^d) $, where $ X $ is an It\^{o} diffusion.
\end{theorem}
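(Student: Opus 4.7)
The plan is to reduce Theorem~\ref{thm:homog_from_fcb_family} to the abstract family homogenisation criterion of \cite{chevyrev2022deterministic}. That criterion expresses $X_n\wconv X$ in terms of a list of moment and iterated-sum convergences for the Birkhoff sums $S_n^{T_n}v=\sum_{k=0}^{n-1}v\circ T_n^k$ of H\"older observables $v\colon\ms\to\R$ with $\int v\, d\mu_n=0$. The proof then has two ingredients: (a) uniform moment and tightness estimates for these Birkhoff sums, coming solely from the uniform \fcb{}; and (b) identification of finite-dimensional limits and of the iterated-integral corrections in terms of $(T_\infty,\mu_\infty)$, coming from combining the uniform \fcb{} with the two-point convergence \eqref{eq:conv_correlation_fns}.

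For (a) I would apply the uniform \fcb{} to the $p$-th moments
\[
\int_\ms (S_n^{T_n}v)^p\, d\mu_n=\sum_{0\le k_0,\dots,k_{p-1}\le n-1}\int_\ms v(T_n^{k_0}x)\cdots v(T_n^{k_{p-1}}x)\, d\mu_n(x),
\]
splitting by orderings of the indices and iteratively decoupling adjacent blocks via \eqref{eq:fcb_bd}. Since $\gamma>1$ and the constant in \eqref{eq:fcb_bd} is independent of $n$, this yields, for $p$ large enough, bounds of order $n^{p/2}$ uniformly in $n$. These bounds are exactly those derived for a single map in \cite{fleming2022}; uniformity in $n$ is automatic from the assumption on the family. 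They furnish Kolmogorov tightness of $n^{-1/2}S_{[n\cdot]}^{T_n}v$ in $D([0,1],\R)$ together with uniform integrability of the relevant functionals.

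For (b) I would begin with the variance
\[
\tfrac{1}{n}\int_\ms (S_n^{T_n}v)^2\, d\mu_n=\tfrac{1}{n}\sum_{j,k=0}^{n-1}\int_\ms v\,(v\circ T_n^{|k-j|})\, d\mu_n.
\]
The two-point case of the uniform \fcb{} with $\gamma>1$ ensures that $|\int v\,(v\circ T_n^j)\, d\mu_n|$ is dominated by a summable, $n$-independent sequence, so dominated convergence together with \eqref{eq:conv_correlation_fns} passes the limit $n\to\infty$ through the sum and identifies the limiting variance as the Green--Kubo expression for $(T_\infty,\mu_\infty)$. The same strategy handles the multi-time covariances and the iterated-sum drift and L\'evy-area corrections that appear in the criterion of \cite{chevyrev2022deterministic}: each such term is rewritten, via the multi-point \fcb{}, as a finite combinatorial sum of $T_n$-correlation integrals of H\"older observables plus a tail that vanishes uniformly in $n$, and the main sum converges term-by-term by \eqref{eq:conv_correlation_fns}. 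Coupled with a Gaussian identification coming from the higher uniform moments in (a), this pins down the limit as an \ito{} diffusion whose coefficients agree with those obtained by applying Theorem~\ref{thm:homogenisation_single_map} to $T_\infty$.

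The main obstacle is the combinatorial bookkeeping in step (b): one must show that every iterated integral appearing in the Chevyrev--Melbourne criterion can, up to remainders that vanish uniformly in $n$, be rewritten as a finite linear combination, with $n$-independent structure, of ordinary correlation integrals $\int v\,(w\circ T_n^j)\, d\mu_n$ with H\"older $v,w$, so that \eqref{eq:conv_correlation_fns} is applicable. The uniform \fcb{} is precisely the right tool for this truncation and rearrangement, but matching the surviving correlation integrals to the Green--Kubo-type coefficients of the limiting diffusion requires a careful accounting of signs and orderings.
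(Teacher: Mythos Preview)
Your high-level strategy---reduce to the abstract criterion of \cite{chevyrev2022deterministic}---is exactly what the paper does, and your treatment of the moment bounds (your item (a), the paper's Lemma~\ref{lemma:moment_bds}) and of the convergence of the Green--Kubo quantities $\Sigma_n\to\Sigma_\infty$, $E_n\to E_\infty$ via dominated convergence and \eqref{eq:conv_correlation_fns} matches the paper (Proposition~\ref{prop:uniform_conv_var} and the end of Subsection~\ref{subsection:pf_main_result}).

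The gap is in the iterated WIP. The relevant assumption in \cite{chevyrev2022deterministic} demands weak convergence of the pair $(W_n,\bbW_n)$ in $D([0,1],\R^d\times\R^{d\times d})$, not merely convergence of its correlation structure. Your plan to rewrite ``every iterated integral\ldots as a finite linear combination\ldots of ordinary correlation integrals $\int v\,(w\circ T_n^j)\, d\mu_n$'' and then pass to the limit via \eqref{eq:conv_correlation_fns} would at best yield convergence of moments; the phrase ``Gaussian identification coming from the higher uniform moments'' does not do the remaining work, since moment \emph{bounds} give tightness but not identification, and the limit $\bbW(t)=\int_0^t W\otimes dW+Et$ is not Gaussian. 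A genuine method-of-moments proof is in principle possible (the limit lies in a fixed Wiener chaos and is moment-determined), but it would require matching all joint moments of $(W_n(t_i),\bbW_n(s_j))$ to the appropriate Wick-type formulas---substantially more than the bookkeeping you flag.

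The paper takes a different route: Bernstein's big block--small block technique. One alternates large blocks of length $[n^a]$ with small blocks of length $[n^b]$ for suitable $0<b<a<1$; the small-block contributions to $W_n$ and $\bbW_n$ vanish in $L^1$ by the moment bounds (Lemma~\ref{lemma:small_blocks_negligible}), and the \fcb{}---applied not to decouple single time points but to compare the joint law of the big-block vectors with that of an i.i.d.\ array (Lemma~\ref{lemma:fcb_weak_dep_multidim})---reduces the problem to a Lindeberg-type iterated WIP for triangular arrays of independent random vectors, handled via Kurtz--Protter (Lemma~\ref{lemma:lindeberg_wip}). This decoupling-to-independence step is the main idea missing from your outline.
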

Again, we refer to~\cite[Sect.~2]{chevyrev2022deterministic} for a precise statement of the regularity conditions on $ a_n $ and $ b_n $. Note that Theorem~\ref{thm:homogenisation_single_map} follows from Theorem~\ref{thm:homog_from_fcb_family} by taking $ T_n\equiv T $, $ \mu_n\equiv \mu $. 
\begin{remark}\label{rk:sufficient_conditions_conv}
	It is straightforward to check that condition~\eqref{eq:conv_correlation_fns} holds provided that 
		\begin{enumerate}[label=(A\arabic*),ref=(A\arabic*)]
		\item $ T_\infty $ is continuous $ \mu_\infty $-a.e.\ and $ \mu_n $ is statistically stable, i.e.\ $ \mu_n \wconv \mu_\infty $.
		\item $\lim_{n\rightarrow \infty}\mu_n(x\in M: d(T_n^j x, T_\infty^j x)>a)=0$ for all $ a>0 $ and $ j\ge 0 $.
	\end{enumerate}
\end{remark}
Conditions (A1)-(A2) are similar to condition (7.2) in~\cite{kkm_martinale_cobdy}. Due to strong statistical stability, these conditions are easy to verify for families of nonuniformly expanding maps. However, condition~\eqref{eq:conv_correlation_fns} is easier to check in certain situations. In particular, in Subsection~\ref{subsection:sinai_billiard_external} we show that it is also possible to verify condition~\eqref{eq:conv_correlation_fns} for an example where statistical stability is proved by Keller-Liverani perturbation theory.

We end this introduction by giving a simple example of a family of dynamical systems covered by Theorem~\ref{thm:homog_from_fcb_family}.
\begin{exa}\label{exa:intermittent_baker}
	Let $ \alpha\in (0,1) $. Define $ g_\alpha:[0,\frac{1}{2}]\to [0,1] $ by $ g_\alpha(x)=x(1+2^\alpha x^\alpha) $. In~\cite{lsvmaps} Liverani, Saussol \textsl{\&} Vaienti introduced an intermittent map of the form
\begin{equation*}
	\bar T_\alpha:[0,1]\to [0,1], \quad \bar T_\alpha(x)=\begin{cases}
		g_\alpha(x), & x< \frac{1}{2},\\
		2x - 1, & x\ge\frac{1}{2}.
	\end{cases}
\end{equation*}
The map $ \bar T_\alpha $ is an archetypal example of a nonuniformly expanding map with slow decay of correlations. Let $ M=[0,1]^2 $. Consider an intermittent Baker's map~\cite{melbourne2016note} given by
\begin{equation}\label{eq:intermittent_baker}
	T_\alpha:M\to M, \quad T_\alpha(x,y)=\begin{cases}
		(\bar T_\alpha(x), g_\alpha^{-1}(y)), & x< \frac{1}{2},\\
		(\bar T_\alpha(x), \frac{1}{2}(y+1)), & x \ge \frac{1}{2}.
	\end{cases}
\end{equation}
The map $ T_\alpha $ is invertible and has a unique physical measure $ \mu_\alpha $. By~\cite{fleming2022}, $ T_\alpha $ satisfies the Functional Correlation Bound with rate $ k^{1-1/\alpha} $. For $ \alpha\ge \frac{1}{2} $, the central limit theorem fails for generic H\"older observables even for the $ \bar T_\alpha $ dynamics~\cite{gouezel2004central}. Hence it is natural to restrict to the range $ \alpha<\frac{1}{2} $ when considering homogenisation.

Let $ \alpha_n\in (0,\frac{1}{2}) $ satisfy $ \alpha_n\to \alpha_\infty\in (0,\frac{1}{2})  $. Let $ \bar T_n = \bar T_{\alpha_n} $, $ T_n = T_{\alpha_n} $ and $ \mu_n = \mu_{\alpha_n} $. In \cite{korepanov2022deterministic} homogenisation is obtained for the family $ \bar T_n $. By Theorem~\ref{thm:homog_from_fcb_family}, we obtain homogenisation for the family $ T_n $; see Subsection~\ref{subsection:intermittent_baker} for more details.
\end{exa}

The remainder of this article is structured as follows. In Section~\ref{section:pf_main_result}, we prove Theorem~\ref{thm:homog_from_fcb_family}. In Section~\ref{section:families_nuh_maps}, we consider nonuniformly hyperbolic maps in the sense of Young and introduce regularity conditions on families of such maps. In particular, in Theorem~\ref{thm:uniform_fcb} we show that such families satisfy the Functional Correlation Bound with uniform rate. Finally, in Section~\ref{section:examples}, we demonstrate how our results apply to externally forced dispersing billiards and the family of intermittent Baker's maps from Example~\ref{exa:intermittent_baker}.

\quad
\textbf{Notation: }We write $ \rightarrow_{\mu_n} $ to denote weak convergence with respect to a specific family of probability measures $ \mu_n $ on the left-hand side. So $ X_n\rightarrow_{\mu_n} X $ means that $ X_n $ is a family of random variables on $ (\ms, \mu_n) $ and $ X_n \wconv X $.

We write $a_n=O(b_n) $ or $ a_n\ll b_n $ if there exists a constant $ C>0 $ such that $ a_n\le Cb_n $ for all $ n\ge 1. $ We write $ a_n=o(b_n) $ if $ \lim_n a_n/b_n=0 $ and $ a_n\sim b_n $ if $ \lim_n a_n/b_n=1. $

For $ d\ge 1 $ and $ a,b\in \R^d $ denote $ a\otimes b=ab^T. $ We endow $ \R^d $ with the norm $ |y|=\sum_{i=1}^d |y_i|. $

For real-valued functions $ f $ and $ g $, we let $\int f \, dg $ denote the \ito{} integral (where defined). For $ \R^d $-valued functions, $ \int f\otimes dg $ denotes the matrix of \ito{} integrals.

For $ \eta\in (0,1] $ and $ v\map{\ms}{\R} $ we denote the $ \eta $-\holder{} norm by $ \dhnorm{v}=\norminf{v}+\dsemi{v} $. If $ \eta=1 $ we call $ v $ Lipschitz and write $ \Lip(v)=[v]_1 $.

\quad\textbf{Acknowledgements:} The author is supported by the Warwick Mathematics Institute Centre for Doctoral Training, and gratefully acknowledges funding from the University of Warwick. The author would also like to thank his supervisor Ian Melbourne for the extensive support and feedback that he provided whilst this article was being written. Finally, he would like to thank Alexey Korepanov for helpful discussions and comments.
\section{Proof of the main result}\label{section:pf_main_result}
This section is dedicated to the proof of Theorem~\ref{thm:homog_from_fcb_family}. We proceed by applying~\cite[Theorem~2.17]{chevyrev2022deterministic}. In Subsection~\ref{subsection:iwip_preliminaries}, we recall some consequences of the Functional Correlation Bound from \cite{fleming2022}, including iterated moment bounds. In Subsection~\ref{subsection:iterated_wip}, we prove the iterated WIP, which is the other main hypothesis of~\cite[Theorem~2.17]{chevyrev2022deterministic}. Finally, in Subsection~\ref{subsection:pf_main_result}, we prove Theorem~\ref{thm:homog_from_fcb_family}.
\subsection{Preliminaries}\label{subsection:iwip_preliminaries}
Let $ T:M\to M $ be a map that satisfies the Functional Correlation Bound with rate $ k^{-\gamma} $, $ \gamma > 0 $.
We repeatedly use the following weak dependence lemma.

Let $ e,q\ge 1 $ be integers. For $ G=(G_1,\dots,G_e)\map{\ms^q}{\R^e}$ and $ 0\le i<q $ we define $ \sdsemi{G}{i}=\sum_{j=1}^e \sdsemi{G_j}{i}. $ We write $ G\in \sepdholsp{q}(\ms,\R^e) $ if $ \norminf{G}+\sum_{i=0}^{q-1}\sdsemi{G}{i}<\infty. $
Let $k\ge 1$ and consider $ k $ disjoint blocks of integers $\{\ell_i,\ell_i+1,\dots,u_i\}$, $1\le i\le k$ with $ \ell_i\le u_i<\ell_{i+1}. $ Consider $ \R^e $-valued random vectors $ X_i $ on $ (\ms,\mu) $ of the form
\begin{equation*}
	X_i(x)=\varPhi_i(\tf^{\ell_i}x,\dots,\tf^{u_i}x)
\end{equation*}
where $ \varPhi_i\in \sepdholsp{u_i-\ell_i+1}(\ms,\R^e) $, $ 0\le i< k. $

When the gaps $\ell_{i+1}-u_i$ between blocks are large, the random vectors $ X_1,\dots,X_k $ are weakly dependent. Let $\widehat{X}_1,\dots,\widehat{X}_k$ be independent random vectors with $\widehat{X}_i{\eqd X_i}$.
\begin{lemma}\label{lemma:fcb_weak_dep_multidim}
Let $R=\max_i \norminf{\varPhi_i}$. Then for all Lipschitz $ F\map{B(0,R)^k}{\R} $,
	\begin{multline*}
		\big|\Ex[\mu]{F(X_1,\dots,X_k)}-\Ex{F(\shat{X}_1,\dots,\shat{X}_k)}\big|\\
		\le C\sum_{r=1}^{k-1}(\ell_{r+1}-u_r)^{-\gamma}\biggl(\norminf{F}+\Lip(F)\sum_{i=1}^{k}\sum_{j=0}^{u_i-\ell_i}\sdsemi{\varPhi_i}{j}\biggr),
	\end{multline*}
	where $ C>0 $ is the constant given by \eqref{eq:fcb_bd}.
\end{lemma}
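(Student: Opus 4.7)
The plan is to proceed by induction on the number of blocks $k$. The base case $k=1$ is immediate since $X_1$ and $\widehat X_1$ are equidistributed and the sum on the right is empty. For the inductive step, the key observation is that $F(X_1(x),\ldots,X_k(x))$ can be written as $G(T^{k_0}x,\ldots,T^{k_{q-1}}x)$, where $q = \sum_{i=1}^{k}(u_i - \ell_i + 1)$, the indices $k_0 \le \cdots \le k_{q-1}$ enumerate $\bigcup_{i=1}^k\{\ell_i,\ldots,u_i\}$, and $G\map{M^q}{\R}$ is the composition that groups the $q$ coordinates into $k$ consecutive blocks, passes the $i$-th block through $\Phi_i$, and then applies $F$. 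Varying only the coordinate associated with $T^{\ell_i+j}x$ alters only the $j$-th argument of $\Phi_i$, so $G\in \sepdholsp{q}(\ms)$ with $\norminf{G}\le \norminf{F}$ and with separately $\eta$-H\"older seminorm in that coordinate bounded by $\Lip(F)\cdot \sdsemi{\Phi_i}{j}$.

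I would then apply the Functional Correlation Bound \eqref{eq:fcb_bd} with $p$ chosen so that the split occurs between $u_{k-1}$ and $\ell_k$; since $u_{k-1}<\ell_k$, the corresponding gap equals $\ell_k - u_{k-1}\ge 1$. Writing $I$ for the resulting double integral over $(x_0,x_1)\in \ms^2$, this yields
\[
\bigl|\Ex[\mu]{F(X_1,\ldots,X_k)} - I\bigr| \le C(\ell_k-u_{k-1})^{-\gamma}\Bigl(\norminf{F} + \Lip(F)\sum_{i=1}^k\sum_{j=0}^{u_i-\ell_i}\sdsemi{\Phi_i}{j}\Bigr).
\]
Because $T$ preserves $\mu$, integrating out $x_1$ identifies $I$ with $\Ex[\mu]{\widetilde F(X_1,\ldots,X_{k-1})}$, where $\widetilde F(y_1,\ldots,y_{k-1}) = \Ex{F(y_1,\ldots,y_{k-1},\widehat X_k)}$. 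One checks directly that $\norminf{\widetilde F}\le \norminf{F}$ and $\Lip(\widetilde F)\le \Lip(F)$, and by independence $\Ex{\widetilde F(\widehat X_1,\ldots,\widehat X_{k-1})} = \Ex{F(\widehat X_1,\ldots,\widehat X_k)}$. Applying the inductive hypothesis to $\widetilde F$ on the first $k-1$ blocks and combining with the display above via the triangle inequality gives precisely the claimed estimate, since $\sum_{i=1}^{k-1}\sum_j \sdsemi{\Phi_i}{j}\le \sum_{i=1}^{k}\sum_j \sdsemi{\Phi_i}{j}$.

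There is no serious technical obstacle; the argument is an induction plus careful bookkeeping. The two facts one must verify with some care are (i) the separately $\eta$-H\"older seminorm bound on $G$, which follows by estimating $|G(z)-G(z')|$ when $z,z'$ differ in a single coordinate, invoking Lipschitz continuity of $F$ on $B(0,R)^k$ together with the separate H\"older continuity of the relevant $\Phi_i$, and (ii) the inheritance of the Lipschitz constant by $\widetilde F$, which follows from pulling the Lipschitz estimate through the expectation defining $\widetilde F$.
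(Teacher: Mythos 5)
Your proof is correct, and the induction-on-blocks argument --- composing $F$ with the $\Phi_i$'s to form a separately H\"older $G$, applying the Functional Correlation Bound to split off the last block, and recursing on the conditional expectation $\widetilde F$ --- is the standard route and is essentially the one the paper relies on (it simply cites \cite[Lemma~4.1]{fleming2022} for $e=1$ and remarks that the multidimensional case is the same). The two bookkeeping points you flag, the bound $\sdsemi{G}{\cdot}\le\Lip(F)\sdsemi{\Phi_i}{j}$ (which uses that the $\ell^1$-norm on $\R^e$ matches the definition $\sdsemi{\Phi_i}{j}=\sum_m\sdsemi{\Phi_{i,m}}{j}$) and the inheritance $\Lip(\widetilde F)\le\Lip(F)$, are precisely the checks that make the induction close, and both are handled correctly.
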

\begin{proof}
	This is proved for $ e=1 $ in \cite[Lemma~4.1]{fleming2022}. The proof of the general case follows the same lines.
\end{proof}
\begin{lemma}\label{lemma:moment_bds}
	Let $ \gamma > 1 $. Then there exists a constant $ C>0 $ such that for all $ k\ge 1 $, for any mean zero $ v,\, w\in \holsp(\ms) $, 
	\begin{enumerate}
		\item $ \pnorm{\sum_{0\le r<k}v\circ \tf^r}_{2\gamma}\le Ck^{1/2}\dhnorm{v}. $
		\item $ \pnorm{\sum_{0\le r<s<k}v\circ \tf^r\ w\circ \tf^s}_{\gamma}\le Ck\dhnorm{v}\dhnorm{w}. $
	\end{enumerate}
\end{lemma}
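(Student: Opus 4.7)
The plan for both parts is to bound a suitable integer moment via direct expansion, using the Functional Correlation Bound to quantify the decay of the resulting multi-point correlations, and then descend to the stated norm by monotonicity of $L^q$ norms on a probability space.

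For (i), I would fix an even integer $m$ with $m\ge 2\gamma$ and aim to show $\mathbb{E}[(S_k v)^m]\le Ck^{m/2}\dhnorm{v}^m$, where $S_k v=\sum_{0\le r<k}v\circ T^r$; the stated bound then follows from $\pnorm{\cdot}_{2\gamma}\le\pnorm{\cdot}_m$ by Jensen's inequality. Expanding gives
\[
\mathbb{E}[(S_k v)^m]=\sum_{0\le r_1,\dots,r_m<k}\mathbb{E}\bigl[v\circ T^{r_1}\cdots v\circ T^{r_m}\bigr].
\]
For each ordered tuple $r_1\le\cdots\le r_m$, the plan is to apply the Functional Correlation Bound to the separately-H\"older observable $G=v^{\otimes m}$ on $M^m$ at a chosen split position $j$ to write
\[
\mathbb{E}\Bigl[\prod_{i=1}^m v\circ T^{r_i}\Bigr]=\mathbb{E}\Bigl[\prod_{i\le j}v\circ T^{r_i}\Bigr]\,\mathbb{E}\Bigl[\prod_{i>j}v\circ T^{r_i}\Bigr]+O\bigl((r_{j+1}-r_j)^{-\gamma}\dhnorm{v}^m\bigr).
\]
Iterating this splitting recursively on each factor produces, after at most $m-1$ rounds, an expression whose leading terms are products of single-point integrals $\int v\,d\mu=0$ (hence vanish) and whose residual terms are products of gap-decay factors. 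The remaining task is a combinatorial count: show that the sum over ordered tuples of these error products is $O(k^{m/2})$. This is the standard gap-counting estimate for weakly dependent sequences, and the critical series converges precisely because $\gamma>1$.

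For (ii), the same scheme applies to $D_k=\sum_{0\le r<s<k}(v\circ T^r)(w\circ T^s)$: fix an integer $m\ge\gamma$ and expand $\mathbb{E}[D_k^m]$ as a sum over $m$-tuples of pairs, obtaining a $2m$-point correlation with alternating $v$ and $w$ factors. Iteratively factorise at maximal gaps via the Functional Correlation Bound, use mean-zero of $v$ and $w$ to annihilate fully decoupled products, and obtain an $O(k^m)$ bound from the gap-counting estimate, giving $\pnorm{D_k}_\gamma\le\pnorm{D_k}_m\le Ck\dhnorm{v}\dhnorm{w}$.

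The main technical obstacle I expect is the combinatorial bookkeeping of the gap decomposition. Unlike the easy $L^2$ expansion, higher-order moments involve multiple pairing-like structures, and one must carefully choose the split position (typically at a maximal gap) at each stage to ensure the summed errors come out to $O(k^{m/2})$ rather than some uncontrolled higher power; in (ii) the pair structure doubles the number of indices at each stage and further complicates the counting. The argument is a standard but lengthy exercise in the CLT theory for weakly dependent sequences, and the condition $\gamma>1$ is what makes the key gap-counting series converge.
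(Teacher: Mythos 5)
Your proposal has a genuine gap. The plan is to pick an even integer $m\ge 2\gamma$, prove $\mathbb{E}\bigl[(S_kv)^m\bigr]\le Ck^{m/2}\dhnorm{v}^m$ by direct expansion and max-gap splitting, and then descend via $\pnorm{\cdot}_{2\gamma}\le\pnorm{\cdot}_m$. But the gap-counting step pulls in exactly the opposite direction. Ordering $r_1\le\cdots\le r_m$ and always splitting at the largest consecutive gap, the residual errors contribute
\[
\sum_{r_1\le\cdots\le r_m<k}\bigl(\max_i\,g_i\bigr)^{-\gamma}
\;\ll\;
k\sum_{G=1}^{k}G^{m-2-\gamma}
\;\ll\;
k^{\,m-\gamma}\quad(\text{for }\gamma<m-1),
\]
which is $O(k^{m/2})$ only when $\gamma\ge m/2$, i.e.\ $m\le 2\gamma$. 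That is incompatible with the requirement $m\ge 2\gamma$ needed for the $L^p$-monotonicity step, except in the degenerate case where $2\gamma$ is itself an even integer (and even then the estimate is critical). For generic $\gamma>1$ no even integer $m$ serves both purposes. The obstruction is not merely technical: for the systems in scope here (e.g.\ intermittent maps with polynomial tails giving $\gamma=1/\alpha-1$), the $L^{2\gamma}$ moment bound for Birkhoff sums is essentially sharp and the $L^m$ bound with $m>2\gamma$ genuinely fails, so the intermediate statement you are trying to prove is false. The same issue affects your treatment of part (ii). What the paper actually does is cite \cite[Theorem~2.4]{fleming2022}, where the $L^{2\gamma}$ (non-integer exponent) moment bound is derived from the Functional Correlation Bound by a more delicate argument than naive moment expansion, and then observes that that proof goes through verbatim with separately H\"older in place of separately dynamically H\"older functions. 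Any correct self-contained proof would have to reproduce that finer mechanism rather than route through an even integer moment.
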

\begin{proof}
	By \cite[Theorem~2.4]{fleming2022}, a functional correlation bound for separately \textit{dynamically} \holder{} functions implies analogous moment bounds for dynamically \holder{} observables. It is easily checked that the same arguments apply with separately \holder{} functions in place of separately dynamically \holder{} ones.
\end{proof}
\subsection{The Iterated WIP}\label{subsection:iterated_wip}
Let $ \gamma>1 $. Throughout this subsection, $ \tf_n,\, n\ge 1 $ is a family of maps that satisfies the \fcb{} uniformly with rate $ k^{-\gamma} $.

\sloppy Fix $ d\ge 1. $ Let $ v_n\map{\ms}{\R^d} $, $ n\ge 1 $ be a family of observables with $ \sup_{n\ge1}\dhnorm{v_n}<\infty $ and $ \int v_n\, d\mu_n=0 $. For $ t\ge 0 $ define
\[ W_n(t)=n^{-1/2}\sum_{0\le r<[nt]}v_n\circ \tf_n^r, \qquad \bbW_n(t)=n^{-1}\sum_{0\le r<s<[nt]}v_n\circ \tf_n^r\otimes v_n\circ \tf_n^s. \]
Let $ v\map{\ms}{\R^d} $ and $ k,\,n\ge 1 $. Define $ S_v(k,n)=\sum_{0\le r<k}v\circ \tf_n^r $ and $ \bbS_v(k,n)=\sum_{0\le r<s<k}v\circ \tf_n^r \otimes v\circ \tf_n^s. $
\begin{prop}\label{prop:uniform_conv_var}
	For each $ n\ge 1 $, the limits 
	\[\Sigma_n=\lim_{k\rightarrow\infty}k^{-1}\Ex[\mu_n]{S_{v_n}(k,n)\otimes S_{v_n}(k,n)}, \quad E_n=\lim_{k\rightarrow\infty}k^{-1}\Ex[\mu_n]{\bbS_{v_n}(k,n)}  \] 
	exist and are given by 
	\begin{align}\label{eq:covar_drift_formulas}
		\begin{split}
		\Sigma_n &= \Ex[\mu_n]{v_n\otimes v_n}+\sum_{\ell\ge 1}\big(\Ex[\mu_n]{v_n\otimes v_n\circ \tf_n^\ell}+\Ex[\mu_n]{v_n\circ \tf_n^\ell\otimes v_n}\! \big),\\
		E_n &= \sum_{\ell\ge 1}\Ex[\mu_n]{v_n\otimes v_n\circ \tf_n^\ell}.
		\end{split}
	\end{align}
	Moreover, the convergence is uniform in $ n $.
\end{prop}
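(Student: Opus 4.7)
The plan is to reduce both limits to the autocorrelation matrices $c_n(\ell) := \Ex[\mu_n]{v_n \otimes v_n\circ \tf_n^\ell}$ for $\ell \ge 0$, for which the uniform Functional Correlation Bound will give summable decay independently of $n$. Using $\tf_n$-invariance of $\mu_n$, a short calculation shows that $\Ex[\mu_n]{v_n\circ\tf_n^r\otimes v_n\circ\tf_n^s}$ equals $c_n(s-r)$ when $r\le s$ and $c_n(r-s)^T$ when $r>s$, where the transpose corresponds exactly to $\Ex[\mu_n]{v_n\circ\tf_n^\ell\otimes v_n}$ in the statement. Grouping the resulting double sums by $\ell=|s-r|$ yields the identities
\[ k^{-1}\Ex[\mu_n]{S_{v_n}(k,n)\otimes S_{v_n}(k,n)}=\Ex[\mu_n]{v_n\otimes v_n}+\sum_{\ell=1}^{k-1}\big(1-\tfrac{\ell}{k}\big)\big(c_n(\ell)+c_n(\ell)^T\big) \]
and $k^{-1}\Ex[\mu_n]{\bbS_{v_n}(k,n)}=\sum_{\ell=1}^{k-1}(1-\ell/k)\,c_n(\ell)$.

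Next I would establish a uniform decay bound on $c_n(\ell)$ by applying the Functional Correlation Bound with $q=2$, $p=1$, $k_0=0$, $k_1=\ell$ componentwise to the function $G_{ij}(x,y)=v_{n,i}(x)\,v_{n,j}(y)$. This function lies in $\sepdholsp{2}(M)$ with separately Hölder norm bounded by a constant multiple of $\dhnorm{v_n}^2$. Since $\int v_n\,d\mu_n=0$, the two-point integral on the right-hand side of \eqref{eq:fcb_bd} vanishes, and summing over $i,j$ gives
\[ |c_n(\ell)|\le C_1\,\ell^{-\gamma}\sup_{m\ge 1}\dhnorm{v_m}^2\qquad(\ell\ge1,\ n\ge1), \]
with $C_1$ depending only on the uniform FCB constant and the dimension $d$. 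Because $\gamma>1$, the series $\sum_{\ell\ge1}|c_n(\ell)|$ converges absolutely with bound independent of $n$, so the series defining $\Sigma_n$ and $E_n$ in \eqref{eq:covar_drift_formulas} converge and the limits exist.

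For the uniform convergence of the Cesàro means I would bound the error by
\[ \bigg|\sum_{\ell=1}^{k-1}(1-\tfrac{\ell}{k})c_n(\ell)-\sum_{\ell\ge1}c_n(\ell)\bigg|\le k^{-1}\sum_{\ell=1}^{k-1}\ell\,|c_n(\ell)|+\sum_{\ell\ge k}|c_n(\ell)|, \]
and substitute the uniform estimate from the previous step. The tail is $O(k^{1-\gamma})$ uniformly in $n$, and the Cesàro correction is dominated by a constant multiple of $k^{-1}\sum_{\ell=1}^{k-1}\ell^{1-\gamma}$, which tends to zero with a rate depending only on $\gamma$. The same estimate applied to $c_n(\ell)+c_n(\ell)^T$ handles $\Sigma_n$. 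I do not anticipate a substantive obstacle here; the only care needed is in the bookkeeping of transposes coming from the non-commutativity of $\otimes$ and in checking that the scalar componentwise application of the FCB indeed yields a bound on $|c_n(\ell)|$ with a constant independent of $n$, both of which are routine given the uniform hypothesis and $\sup_n\dhnorm{v_n}<\infty$.
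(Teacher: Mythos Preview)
Your proposal is correct and follows essentially the same approach as the paper: both expand the finite sums as Ces\`aro-weighted autocorrelations $c_n(\ell)$, bound $|c_n(\ell)|\ll \ell^{-\gamma}$ uniformly in $n$ via the FCB applied componentwise to $G(x,y)=v_{n,i}(x)v_{n,j}(y)$ (using $\int v_n\,d\mu_n=0$ to kill the product term), and then control the error by the same splitting into the Ces\`aro correction $k^{-1}\sum_{\ell<k}\ell^{1-\gamma}$ and the tail $\sum_{\ell\ge k}\ell^{-\gamma}$. The only cosmetic difference is that the paper treats $E_n$ in detail and says $\Sigma_n$ is similar, whereas you also spell out the transpose bookkeeping for $\Sigma_n$.
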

\begin{proof}
	We prove the existence of the limit $E_n $. The proof of the existence of the limit $ \Sigma_n $ is similar. Note that
	\begin{align*}
		\Ex[\mu_n]{\bbS_{v_n}(k,n)}=\sum_{\ell=1}^{k-1}\sum_{r=0}^{k-\ell-1}\Ex[\mu_n]{v_n\otimes v_n\circ \tf_n^\ell}=\sum_{\ell=1}^{k-1}(k-\ell)\Ex[\mu_n]{v_n\otimes v_n\circ \tf_n^\ell}.
	\end{align*}
	Let $ 1\le i,j\le d $ and $ \ell\ge 1 $. Define $ G\map{\ms^2}{\R} $ by $ G(x,y)=v_n^i(x)v_n^j(y) $. By the \fcb{},
	\begin{align*}
		|\Ex[\mu_n]{v_n^i v_n^j \circ \tf_n^\ell}|&=\bigg|\int G(x,\tf_n^\ell x)d\mu_n(x)\bigg|\\
		&\ll \ell^{-\gamma}\dhnorm{v_n^i}\dhnorm{v_n^j}+\bigg|\int v_n^i d\mu_n \int v_n^j d\mu_n\bigg|=\ell^{-\gamma}\dhnorm{v_n^i}\dhnorm{v_n^j}.
	\end{align*}
	It follows that for all $ n\ge 1, $
	\begin{flalign*}
		\quad &\bigg|\sum_{\ell\ge 1}\Ex[\mu_n]{v_n\otimes v_n\circ \tf_n^\ell}-k^{-1}\Ex[\mu_n]{\bbS_{v_n}(k,n)}\!\bigg| &\\
		&\qquad\qquad\le k^{-1}\sum_{\ell=1}^{k-1}\ell\big|\Ex[\mu_n]{v_n\otimes v_n\circ \tf_n^\ell}\!\big|+\sum_{\ell\ge k}\big|\Ex[\mu_n]{v_n\otimes v_n\circ \tf_n^\ell}\!\big|\\
		& 
		\qquad\qquad\ll k^{-1}\sum_{\ell=1}^{k-1}\ell^{1-\gamma}+\sum_{\ell\ge k}\ell^{-\gamma}\ll k^{-1}(1+k^{2-\gamma})+k^{1-\gamma}=o(1),
	\end{flalign*}
	as required.
\end{proof}
We are now ready to state the main result of this subsection:
\begin{theorem}[Iterated WIP]\label{thm:iterated_wip}
	Suppose that $ \lim_{n\rightarrow\infty}\Sigma_n=\Sigma $ and $ \lim_{n\rightarrow\infty}E_n=E $. Then $ (W_n,\bbW_n)\rightarrow (W,\bbW) $ in the sense of finite-dimensional distributions, where $ W $ is a Brownian motion with covariance $ \Sigma $ and $ \bbW(t)=\int_0^t W\otimes dW+Et $. This means that for all $ \ell \ge 1 $ and $ 0\le t_1,\dots,t_\ell\le 1 $,
	\[ ((W_n,\bbW_n)(t_1),\dots,(W_n,\bbW_n)(t_\ell))\rightarrow_{\mu_n}((W,\bbW)(t_1),\dots,(W,\bbW)(t_\ell)). \]
\end{theorem}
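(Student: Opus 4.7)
The plan is to use the classical big block / small block decomposition, a standard technique that precisely matches our weak-dependence hypothesis. Choose block sizes $p=p_n$ and $q=q_n$ with $p,q\to\infty$, $q=o(p)$, $p=o(n)$, and $(n/p)q^{-\gamma}\to 0$; since $\gamma>1$ this is achievable, for example by taking $p\asymp n^{2/3}$ and $q\asymp n^\delta$ for a suitable small $\delta>0$. Fix times $t_1\le\cdots\le t_\ell$ in $[0,1]$ and partition $\{0,1,\ldots,[nt_\ell]-1\}$ into alternating big blocks $B_i$ of length $p$ and small blocks of length $q$, producing $K\asymp n/p$ big blocks. Define block contributions $U_i^n=n^{-1/2}\sum_{r\in B_i}v_n\circ \tf_n^r$ and $V_i^n=n^{-1}\sum_{\substack{r<s\\ r,s\in B_i}}v_n\circ \tf_n^r\otimes v_n\circ \tf_n^s$.

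The first step is to decouple the big blocks. I would apply Lemma~\ref{lemma:fcb_weak_dep_multidim} to the vector $\bigl((U_i^n,V_i^n)\bigr)_{i=1}^K$: since consecutive big blocks are separated by at least $q$ iterates, the error in replacing this vector by an independent copy $\bigl((\hat U_i^n,\hat V_i^n)\bigr)_{i=1}^K$ with matching marginals, tested against any bounded Lipschitz functional $F$ of the partial sums $(W_n(t_j),\bbW_n(t_j))_{j=1}^\ell$, is bounded by $CKq^{-\gamma}$ times a constant depending on $\norminf{F}$, $\Lip(F)$ and $\sup_n\dhnorm{v_n}$. This vanishes by our choice of $q$.

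The second step is to show that the small-block contributions are asymptotically negligible. The small-block part of $W_n$ is an $n^{-1/2}$-scaled sum of disjoint block sums of length $q$; using Lemma~\ref{lemma:moment_bds}(i) together with a further application of Lemma~\ref{lemma:fcb_weak_dep_multidim} to control cross-correlations between well-separated small blocks, its $L^2(\mu_n)$ norm is $\ll\sqrt{Kq/n}=\sqrt{q/p}=o(1)$. For $\bbW_n$ the analogous small-small and big-small cross terms are handled similarly using Lemma~\ref{lemma:moment_bds}(ii) and Cauchy-Schwarz. Thus, up to a term vanishing in probability under $\mu_n$, $(W_n(t_j),\bbW_n(t_j))$ equals $\bigl(\sum_{i\in I_j}\hat U_i^n,\ \sum_{i\in I_j}\hat V_i^n+\sum_{i<i',\, i,i'\in I_j}\hat U_i^n\otimes\hat U_{i'}^n\bigr)$, where $I_j$ is the set of big blocks contained in $[0,nt_j]$.

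The final step is a classical iterated CLT for independent triangular arrays. By Proposition~\ref{prop:uniform_conv_var} and the hypotheses $\Sigma_n\to\Sigma$, $E_n\to E$, one obtains $\sum_{i\in I_j}\Ex{\hat U_i^n\otimes\hat U_i^n}\to\Sigma\, t_j$ and $\sum_{i\in I_j}\Ex{\hat V_i^n}\to E\,t_j$. A Lindeberg condition is supplied by the $(2\gamma)$-th moment bound from Lemma~\ref{lemma:moment_bds}(i) with $2\gamma>2$. The standard multidimensional triangular-array CLT then gives joint convergence of the linear parts $(\sum_{i\in I_j}\hat U_i^n)_{j=1}^\ell$ to $(W(t_j))_{j=1}^\ell$, where $W$ is a Brownian motion with covariance $\Sigma$; the $\hat V_i^n$ sums concentrate at $Et_j$ by a variance bound obtained from Lemma~\ref{lemma:moment_bds}(ii); and the cross-product sum $\sum_{i<i'}\hat U_i^n\otimes\hat U_{i'}^n$ converges jointly with the linear part to the \ito{} integral $\int_0^{t_j}W\otimes dW$ by standard arguments for discrete stochastic integrals built from piecewise-constant martingales with independent increments. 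The main obstacle will be the careful bookkeeping for the bilinear object $\bbW_n$, in particular handling the big-small cross terms and ensuring that the linear and quadratic parts converge jointly; the key analytic inputs --- $\gamma>1$, Proposition~\ref{prop:uniform_conv_var}, and $\Sigma_n\to\Sigma$, $E_n\to E$ --- are precisely what is needed to close the argument.
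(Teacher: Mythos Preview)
Your overall strategy---big/small blocks, decoupling the big-block contributions via Lemma~\ref{lemma:fcb_weak_dep_multidim}, then an iterated CLT for the resulting independent triangular array---is precisely the paper's. However, there is a genuine gap in your decoupling step. To test convergence of $\bbW_n$ you must compose your bounded Lipschitz test function $F$ with the map $(U_1^n,\dots,U_K^n)\mapsto\sum_{i<i'}U_i^n\otimes U_{i'}^n$, and this map is \emph{not} uniformly Lipschitz: on the range $B(0,R)^K$ with $R=\max_i\norminf{U_i^n}\ll pn^{-1/2}$ its Lipschitz constant is of order $KR\ll\sqrt{n}$. In addition, the factor $\sum_{i}\sum_{j}\sdsemi{\varPhi_i}{j}$ appearing in Lemma~\ref{lemma:fcb_weak_dep_multidim} is itself of order $Kp\,n^{-1/2}\ll\sqrt{n}$. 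Thus the true decoupling error is of order $Kq^{-\gamma}\cdot n$, not $Kq^{-\gamma}$ times a fixed constant; the condition you actually need is $n^2/(pq^\gamma)\to 0$, not merely $(n/p)q^{-\gamma}\to 0$. With your choice $p\asymp n^{2/3}$, $q\asymp n^\delta$ this forces $\gamma\delta>4/3$, which together with $\delta<2/3$ is impossible for $1<\gamma\le 2$.

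The remedy is to push both block exponents towards $1$. Writing $p=[n^a]$, $q=[n^b]$ with $0<b<a<1$, one needs $a+\gamma b>2$ for the bilinear decoupling above, $b>\gamma^{-1}$ for the concentration of $\sum_i V_i^n$ (the decoupling error for that \emph{linear} functional is $\sim n q^{-\gamma}$, not $Kq^{-\gamma}$, for the same seminorm reason), and $a>(b+1)/2$ for the small-block remainder in $\bbW_n$. These constraints are simultaneously satisfiable precisely because $\gamma>1$. With the parameters corrected, the rest of your outline goes through and matches the paper's argument (Lemmas~\ref{lemma:small_blocks_negligible}, \ref{lemma:lln_diagonal} and~\ref{lemma:iterated_clt_non_diagonal}).
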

Our proof of the Iterated WIP (Theorem~\ref{thm:iterated_wip}) is inspired by the proof of the central limit theorem in \cite[Chap.\ 7]{chernov_markarian}, which is based on Bernstein's `big block-small block' technique. Let $ 0<b<a<1 $. We split $ \{0,\dots,n-1\} $ into alternating big blocks of length $ p=[n^a] $ and small blocks of length $ q=[n^b] $. Let $ k $ denote the number of big blocks, which is equal to the number of small blocks. Then $ k=[n/(p+q)]=O(n^{1-a}). $ The last remaining block is of length at most $ p+q $.

Let $ \mathcal{B}\subset \{0,\dots,n-1\} $ denote the set of terms contained in big blocks. Let $ t\in [0,1] $. Then $ W_n(t)=I_1(t)+I_2(t) $ and $ \bbW_n(t)=J_1(t)+J_2(t)+J_3(t) $, where
\begin{align}
	\begin{split}\label{eq:iterated_big_block_cont}
		I_1(t)&=\frac{1}{n^{1/2}}\sum_{0\le r<[nt]\colon r\in \mathcal{B}}v_n\circ\tf_n^r,\qquad I_2(t)=\frac{1}{n^{1/2}}\sum_{0\le r<[nt] \colon r\notin \mathcal{B}}v_n\circ\tf_n^r,  \\
		J_1(t)&=\frac{1}{n}\sum_{0\le r<s<[nt]\colon r,s\in \mathcal{B}}v_n\circ \tf_n^r \otimes v_n\circ \tf_n^s,\\
		J_2(t)&=\frac{1}{n}\sum_{0\le r<s<[nt]\colon r\notin \mathcal{B},s\in \mathcal{B}}v_n\circ \tf_n^r \otimes v_n\circ \tf_n^s, \\ J_3(t)&=\frac{1}{n}\sum_{0\le r<s<[nt]\colon s\notin \mathcal{B}}v_n\circ \tf_n^r \otimes v_n\circ \tf_n^s.
	\end{split}
\end{align}
\begin{remark}
	In \cite[Chap.\ 7]{chernov_markarian} the central limit theorem is proved under a hypothesis on decay of multiple correlations, where the Functional Correlation Bound~\eqref{eq:fcb_bd} is only assumed for functions $ G:M^q\to \R $ of the form $ G(x_0,\dots,x_{q-1})=\prod_{i=0}^{q-1}v_i(x_i) $. This hypothesis is strong enough to control the characteristic function of $ I_1(t) $. However, functions $ G$ which are not of the above form arise naturally when we consider the characteristic function of $ J_1(t) $.
\end{remark}
We first show that the terms $ I_2(t) $, $ J_2(t) $, $ J_3(t) $ that involve small blocks can be neglected.
\begin{lemma}\label{lemma:small_blocks_negligible}
	Suppose that $ a>\frac{b+1}{2} $. Let $ t\in [0,1] $. Then $I_2(t)\rightarrow_{\mu_n}0$, $ J_2(t)\rightarrow_{\mu_n}0 $ and $ J_3(t)\rightarrow_{\mu_n}0 $ as $ n\rightarrow\infty $.
\end{lemma}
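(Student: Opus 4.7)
The plan is to show that $I_2(t)$ converges to $0$ in $L^2(\mu_n)$, and that $J_2(t)$ and $J_3(t)$ converge to $0$ in $L^1(\mu_n)$; in all cases convergence in probability follows. Write $V=\sup_n\dhnorm{v_n}<\infty$. Note that the number of big blocks is $k=O(n^{1-a})$ and the total number of small-block indices in $[0,n)$ is $(k+1)q=O(n^{1-a+b})$.

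For $I_2(t)$, I expand the second moment and apply the Functional Correlation Bound to $G(x,y)=v_n(x)v_n(y)$, using $\int v_n\,d\mu_n=0$, to obtain $|\Ex[\mu_n]{v_n\circ T_n^r\,v_n\circ T_n^s}|\ll V^2(|s-r|+1)^{-\gamma}$. Since $\gamma>1$, summing over $s$ for fixed $r$ is bounded by a constant, so
\[
\Ex[\mu_n]{|I_2(t)|^2}\ll n^{-1}\cdot n^{1-a+b}= n^{b-a}=o(1),
\]
using $a>b$ (which is implied by $a>(b+1)/2$).

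For $J_3(t)$, I group by the small block containing $s$. Writing the $i$-th small block as $B^s_i=\{u_i,\dots,u_i+q-1\}$ and $\hat B^s_i(t)=B^s_i\cap[0,[nt])$, and splitting $\sum_{r<s}v_n\circ T_n^r=S_{v_n}(u_i,n)+\sum_{u_i\le r<s}v_n\circ T_n^r$ for $s\in\hat B^s_i(t)$, I obtain
\begin{align*}
J_3(t)=n^{-1}\sum_i \tilde Q_i(t)\otimes S_{v_n}(u_i,n)+n^{-1}\sum_i\sum_{\substack{u_i\le r<s \\ s\in\hat B^s_i(t)}}v_n\circ T_n^r\otimes v_n\circ T_n^s,
\end{align*}
where $\tilde Q_i(t)=\sum_{s\in\hat B^s_i(t)}v_n\circ T_n^s$. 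The intra-block term has $L^1$ norm $\ll q$ per block by Lemma~\ref{lemma:moment_bds}(2) together with the $T_n$-invariance of $\mu_n$, summing over $i$ to $O(n^{b-a})=o(1)$. For the main term, Cauchy-Schwarz and Lemma~\ref{lemma:moment_bds}(1) give $\|\tilde Q_i(t)\otimes S_{v_n}(u_i,n)\|_1\ll q^{1/2}\cdot n^{1/2}$, so summing over $k=O(n^{1-a})$ blocks yields $\|J_3(t)\|_1\ll n^{(1+b-2a)/2}=o(1)$ precisely under $a>(b+1)/2$.

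The argument for $J_2(t)$ is analogous: I group by the small block containing $r$ and observe that, since big and small blocks alternate, the set of big-block indices $s>r$ with $s<[nt]$ does not depend on $r\in B^s_i$, giving $J_2(t)=n^{-1}\sum_i\tilde Q_i(t)\otimes \tilde R_i(t)$ where $\tilde R_i(t)$ is a big-block sum on $[u_i+q,[nt])$. Decomposing $\tilde R_i(t)$ as the consecutive partial sum on $[u_i+q,[nt])$ minus its small-block residue, and bounding both in $L^2$ by $O(n^{1/2})$ (the consecutive sum via Lemma~\ref{lemma:moment_bds}(1), the residue via the $I_2$-style estimate), Cauchy-Schwarz again yields $\|J_2(t)\|_1\ll n^{(1+b-2a)/2}$. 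The decisive ingredient throughout is the square-root partial-sum bound from Lemma~\ref{lemma:moment_bds}(1): the crude bound $\|S_{v_n}(u_i,n)\|_2=O(u_i)$ would yield only $a>1+b/2$, which is incompatible with $a<1$.
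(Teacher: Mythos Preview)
Your approach is essentially the paper's: split by the block containing $s$ (for $J_3$) or $r$ (for $J_2$), factor, and bound each piece by Cauchy--Schwarz together with the moment bounds of Lemma~\ref{lemma:moment_bds}. The one genuine gap is that you have forgotten the ``last remaining block'' of length $n-k(p+q)\le p+q$, which is not in $\mathcal{B}$: your count $(k+1)q=O(n^{1-a+b})$ for the non-big-block indices is wrong, since the remainder contributes up to $p+q\sim n^a$ further indices. In fact, under the hypothesis $a>\tfrac{b+1}{2}$ one has $a>1-a+b$, so the remainder \emph{dominates} the union of the genuine small blocks. Fortunately your method absorbs it without change: for $I_2$ the extra contribution to the second moment is $\ll n^{-1}\cdot n^{a}=n^{a-1}=o(1)$, and for $J_3$ the extra block contributes $\ll n^{-1}\bigl(n^{a/2}\cdot n^{1/2}+n^{a}\bigr)=n^{(a-1)/2}+n^{a-1}=o(1)$, both using only $a<1$. (For $J_2$ the remainder does not contribute, since if $r$ lies in it there are no big-block $s>r$.) The paper handles this explicitly by writing $\{0,\dots,[nt]-1\}\setminus\mathcal{B}=\bigcup_{i=1}^{k+1}C_i$ with $\#C_{k+1}=O(n^{a})$. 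A minor point: in your $J_3$ decomposition the tensor factors are reversed; since $r<s$ with $s$ in the small block, the cross term is $S_{v_n}(u_i,n)\otimes\tilde Q_i(t)$, not $\tilde Q_i(t)\otimes S_{v_n}(u_i,n)$.
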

\begin{proof}
	We show that $ \pnorm{J_3(t)}_{L^1(\mu_n)}\rightarrow 0$. By the same line of argument, $ \pnorm{I_2(t)}_{L^1(\mu_n)}\conv 0 $ and $ \pnorm{J_2(t)}_{L^1(\mu_n)}\rightarrow 0$.
	
	Write $ \{0,\dots,[nt]-1\}\setminus \mathcal{B}=\bigcup_{i=1}^{k+1} C_i $ where $ C_i$ denotes the intersection of $ \{0,\dots,[nt]-1\} $ with the $ i $th small block for $ 1\le i\le k $. Also, $ C_{k+1} $ denotes the intersection of $ \{0,\dots,[nt]-1\} $ with the last remaining block. Write $ C_i=\{\ell,\ell+1,\dots,u\} $. Then
	\begin{multline*}
		\sum_{0\le r<s<[nt]\colon s\in C_i}v_n\circ \tf_n^r\otimes v_n\circ \tf_n^s=\sum_{r=0}^{\ell-1}\sum_{s=\ell}^{u} v_n\circ \tf_n^r \otimes v_n\circ \tf_n^s\\
		+\sum_{\ell\le r<s\le u}v_n\circ \tf_n^r\otimes v_n\circ \tf_n^s.
	\end{multline*}
	Hence by Lemma~\ref{lemma:moment_bds},
	\begin{flalign}
		\pnorm{\sum_{0\le r<s<[nt]\colon s\in C_i}\!\! v_n\circ \tf_n^r\otimes v_n\circ \tf_n^s}_{L^1(\mu_n)}
		&\le \pnorm{\sum_{r=0}^{\ell-1} v_n\circ \tf_n^r}_{L^2(\mu_n)}\! \pnorm{\sum_{s=\ell}^{u} v_n\circ \tf_n^s}_{L^2(\mu_n)}\nonumber\\
		&\qquad\qquad \quad+\pnorm{\sum_{\ell\le r\le s\le u}v_n\circ \tf_n^r\otimes v_n\circ \tf_n^s}_{L^1(\mu_n)}\nonumber\\
		&\ll \ell^{1/2}\# C_i^{1/2}+\# C_i\ll \# C_i^{1/2}n^{1/2}\label{eq:individual_small_block_bd}.
	\end{flalign}
	Let $ 1\le i\le k $. Then $ \#C_i\le q=[n^b] $. Also, $ k=O(n^{1-a}) $ and $ \# C_{k+1}=O(n^a) $. Thus
	\begin{align*}
		\pnorm{J_3(t)}_{L^1(\mu_n)}&\le \frac{1}{n}\sum_{i=1}^{k+1} \pnorm{\sum_{0\le r<s<[nt]\colon s\in C_i}\!\!v_n\circ \tf_n^r\otimes v_n\circ \tf_n^s}_{L^1(\mu_n)}\\
		&\ll \frac{1}{n}(n^{1-a}n^{\frac{1}{2}(b+1)}+n^{\frac{1}{2}(a+1)})\ll n^{\frac{1}{2}(b+1)-a}+n^{\frac{1}{2}(a-1)}=o(1),
	\end{align*}
	as required.
\end{proof}
For $ 1\le i\le k $ let
\begin{align*}
	X_i&=n^{-1/2}\sum_{0\le r<p}v_n\circ\tf_n^{r+(i-1)(p+q)},\\
	\bbX_i&=n^{-1}\sum_{0\le r<s< p}(v_n\circ \tf_n^r \otimes v_n\circ \tf_n^s) \circ \tf_n^{(i-1)(p+q)}.
\end{align*}
For $ 0\le t\le 1 $ define 
\[ \widetilde W_n(t)=\sum_{1\le i\le[kt]} X_i,\quad \widetilde \bbW_n(t)=\sum_{1\le i<j\le [kt]}X_i\otimes X_j. \]
\begin{prop}\label{prop:W_n_tilde_W_n_approx}
	Suppose that $ a>\frac{b+1}{2} $. Let $ t\in[0,1] $. Then 
	\[ W_n(t)-\widetilde W_n(t)\rightarrow_{\mu_n} 0,\qquad \bbW_n(t)-\widetilde \bbW_n(t)-\sum_{i=1}^{[kt]}\bbX_i\rightarrow_{\mu_n} 0. \]
\end{prop}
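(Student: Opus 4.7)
The plan is to reduce both assertions to boundary estimates on a single (partial) big block, using Lemma~\ref{lemma:small_blocks_negligible} to discard the small-block contributions and Lemma~\ref{lemma:moment_bds} to control what remains.

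For the linear part, I would write
\[ W_n(t)-\widetilde W_n(t)=\bigl(I_1(t)-\widetilde W_n(t)\bigr)+I_2(t), \]
and observe that the first bracket is $n^{-1/2}$ times a sum over the set $A$ of big-block indices in $\mathcal{B}\cap\{[kt](p+q),\dots,[nt]-1\}$, together with a possible missing final block if $[kt](p+q)$ overshoots $[nt]$. Since $|A|\le p+q=O(n^{a})$ and $A$ is a union of at most two intervals, splitting and applying Lemma~\ref{lemma:moment_bds}(i) to each interval gives $\|I_1(t)-\widetilde W_n(t)\|_{L^2(\mu_n)}\ll n^{-1/2}p^{1/2}=n^{(a-1)/2}=o(1)$ since $a<1$, and $I_2(t)\to_{\mu_n}0$ by Lemma~\ref{lemma:small_blocks_negligible}.

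For the iterated part, I would write
\[ \bbW_n(t)-\widetilde \bbW_n(t)-\sum_{i=1}^{[kt]}\bbX_i=\Bigl(J_1(t)-\widetilde \bbW_n(t)-\sum_{i=1}^{[kt]}\bbX_i\Bigr)+J_2(t)+J_3(t), \]
where $J_2(t),J_3(t)\to_{\mu_n}0$ by Lemma~\ref{lemma:small_blocks_negligible}. Observe next that $\widetilde \bbW_n(t)+\sum_{i=1}^{[kt]}\bbX_i$ is exactly the contribution to $J_1(t)$ from pairs $(r,s)$ with $r<s$ both lying in $B_1\cup\dots\cup B_{[kt]}$. Hence the first bracketed term comes from pairs with at least one index in the leftover set $P=\mathcal{B}\cap\{[kt](p+q),\dots,[nt]-1\}$ (plus, if necessary, one extra big block), so $|P|=O(p)$. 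Split this contribution into (a) the diagonal piece with both $r,s\in P$ and (b) the cross piece with $r$ in the first $[kt]$ big blocks and $s\in P$. Part (a) is bounded componentwise by Lemma~\ref{lemma:moment_bds}(ii) after further decomposing $P$ into its $O(1)$ sub-intervals, yielding $L^1(\mu_n)$ norm $\ll n^{-1}p=n^{a-1}=o(1)$. Part (b) is estimated by Cauchy-Schwarz,
\[ n^{-1}\Bigl\|\sum_{r\in \mathcal{B}\cap[0,[nt]-1]\setminus P}v_n\circ T_n^r\Bigr\|_{L^2(\mu_n)}\cdot\Bigl\|\sum_{s\in P}v_n\circ T_n^s\Bigr\|_{L^2(\mu_n)}, \]
with the first factor controlled by $\|\sum_{r=0}^{[nt]-1}v_n\circ T_n^r\|_{L^2(\mu_n)}\ll n^{1/2}$ from Lemma~\ref{lemma:moment_bds}(i), plus the analogous bound for the subtracted small-block sum, and the second factor $\ll p^{1/2}$; this gives $\ll n^{-1}n^{1/2}p^{1/2}=n^{(a-1)/2}=o(1)$.

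The main obstacle is bookkeeping: one must keep careful track of the at most $O(1)$ incomplete big blocks that arise because $[kt](p+q)$ need not equal $[nt]$ and because $[nt]$ may land inside either a big or a small block. Once this is done, the cross term in (b) is the only nontrivial estimate, and it hinges on combining Cauchy-Schwarz with the subadditivity bound from Lemma~\ref{lemma:moment_bds}(i) applied to the near-contiguous set of non-leftover indices.
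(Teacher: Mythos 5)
Your argument is correct, but it takes a genuinely different route from the paper. You work directly at time $t$, identify the leftover index set $P=\mathcal{B}\cap\{[kt](p+q),\dots,[nt]-1\}$ of size $O(p)$ (a union of $O(1)$ sub-intervals), and bound the within-$P$ and cross contributions separately via Lemma~\ref{lemma:moment_bds} and Cauchy--Schwarz; the only slightly delicate step is bounding $\bigl\|\sum_{r\in\mathcal{B}\cap[0,[kt](p+q))}v_n\circ T_n^r\bigr\|_{L^2(\mu_n)}\ll n^{1/2}$, which you correctly obtain by writing it as the full contiguous sum minus the first $[kt]$ small-block sums (the latter is $\ll kq^{1/2}=n^{1-a+b/2}=o(n^{1/2})$ precisely under $a>\tfrac{b+1}{2}$). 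The paper avoids this bookkeeping by a small change of time point: since $[kt](p+q)$ is exactly the start of the $([kt]+1)$th big block, $W_n$ and $\bbW_n$ evaluated at $t'=[kt](p+q)/n$ satisfy $W_n(t')=\widetilde W_n(t)+I_2(t')$ and $\bbW_n(t')=\widetilde\bbW_n(t)+\sum_{i\le[kt]}\bbX_i+J_2(t')+J_3(t')$ \emph{exactly}, so Lemma~\ref{lemma:small_blocks_negligible} handles the block structure, and all that remains is $W_n(t)-W_n(t')\to_{\mu_n}0$ and $\bbW_n(t)-\bbW_n(t')\to_{\mu_n}0$, which follow from the estimate~\eqref{eq:individual_small_block_bd} applied to the interval $\{[nt'],\dots,[nt]-1\}$ together with $t'\to t$. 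So the paper's decomposition dispenses with the leftover set $P$ altogether and reuses the single interval estimate already established in Lemma~\ref{lemma:small_blocks_negligible}'s proof, whereas yours re-derives essentially that estimate in parts (a)--(b); both lean on the same moment bounds. One minor point: your worry about $[kt](p+q)$ ``overshooting'' $[nt]$ is unnecessary — since $k(p+q)\le n$, one always has $[kt](p+q)\le nt$, hence $[kt](p+q)\le[nt]$.
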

\begin{proof}
	Recall the definitions of $ I_1 $ and $ J_1$ from \eqref{eq:iterated_big_block_cont}. Let $ t\in [0,1] $. 
	Since $ [kt](p+q) $ is the first term of the $([kt]+1)$th big block,
	\begin{equation*}
		I_1\left(\frac{[kt](p+q)}{n}\right)=\sum_{1\le i\le[kt]}X_i,\quad J_1\left(\frac{[kt](p+q)}{n}\right)=\sum_{1\le i\le [kt]}\bbX_i+\sum_{1\le i<j\le [kt]}X_i\otimes X_j.
	\end{equation*}
	Hence
	\[ W_n([kt]\tfrac{p+q}{n})=\sum_{1\le i\le[kt]}X_i+I_2([kt]\tfrac{p+q}{n})=\widetilde{W_n}(t)+I_2([kt]\tfrac{p+q}{n}) \]
	and similarly 
	\[  \bbW_n([kt]\tfrac{p+q}{n})=\stilde \bbW_n(t)+\sum_{1\le i\le[kt]}\bbX_i+J_2([kt]\tfrac{p+q}{n})+J_3([kt]\tfrac{p+q}{n}). \]
	By Lemma~\ref{lemma:small_blocks_negligible}, it follows that $ W_n([kt]\frac{p+q}{n})-\stilde W_n(t)\rightarrow_{\mu_n} 0 $ and $ \bbW_n([kt]\tfrac{p+q}{n})-\stilde \bbW_n(t)-\sum_{i=1}^{[kt]}\bbX_i\rightarrow_{\mu_n} 0. $ It remains to show that $ \bbW_n(t)-\bbW_n([kt]\frac{p+q}{n})\rightarrow_{\mu_n} 0$ and $ W_n(t)-W_n([kt]\frac{p+q}{n}) \rightarrow_{\mu_n} 0.$ Let $ 0\le t'\le t $. Let $ C=\{[nt'],\dots,[nt]-1\} $. By~\eqref{eq:individual_small_block_bd},
	\begin{align*}
		\pnorm{\bbW_n(t)-\bbW_n(t')}_{L^1(\mu_n)}&=\pnorm{n^{-1}\sum_{0\le r<s<[nt] \colon s\in C}v_n\circ \tf_n^r \otimes v_n\circ \tf_n^s}_{L^1(\mu_n)}\\
		&\ll n^{-1/2}\# C^{1/2}\ll n^{-1/2}([nt]-[nt'])^{1/2}\\
		&\ll (n^{-1}+t-t')^{1/2}.
	\end{align*}
	Now, $ [kt]\frac{p+q}{n}=\big[\big[\frac{n}{p+q}\big]t\big]\frac{p+q}{n}\rightarrow t $ as $ n\rightarrow \infty $ so $ \pnorm{\bbW_n(t)-\bbW_n([kt]\frac{p+q}{n})}_{L^1(\mu_n)}\rightarrow 0. $ By a similar argument, $ W_n(t)-W_n([kt]\frac{p+q}{n}) \rightarrow_{\mu_n} 0.$
\end{proof}
\begin{lemma}\label{lemma:lln_diagonal}
	Suppose that $ b>\gamma^{-1} $. Let $ t\in [0,1] $. Then $ \sum_{i=1}^{[kt]}\bbX_i\rightarrow_{\mu_n} tE$ as $ n\rightarrow \infty $.
\end{lemma}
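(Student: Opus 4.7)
The plan is a law-of-large-numbers argument: first couple the weakly dependent $\bbX_i$ to an i.i.d.\ sequence via the functional correlation bound, then apply the moment bound in Lemma~\ref{lemma:moment_bds}(ii) to obtain the LLN for the i.i.d.\ sum. Write $\bbX_i = \Phi(\tf_n^{\ell_i}x,\ldots,\tf_n^{\ell_i + p - 1}x)$ with $\ell_i = (i-1)(p+q)$ and
\[ \Phi(x_0,\ldots,x_{p-1}) = n^{-1}\sum_{0 \le r < s < p} v_n(x_r) \otimes v_n(x_s). \]
By $\tf_n$-invariance of $\mu_n$, the $\bbX_i$ are identically distributed with common mean $n^{-1}\E_{\mu_n}[\bbS_{v_n}(p,n)]$. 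Combining the uniform convergence in Proposition~\ref{prop:uniform_conv_var} with $E_n \to E$ and $p \to \infty$ gives $p^{-1}\E_{\mu_n}[\bbS_{v_n}(p,n)] \to E$. Since $[kt]p/n \to t$ (using $k(p+q)/n \to 1$ and $q/p \to 0$), this yields $\E_{\mu_n}[\sum_{i=1}^{[kt]}\bbX_i] \to tE$.

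Next, let $\widehat{\bbX}_1,\ldots,\widehat{\bbX}_m$ (with $m = [kt]$) be i.i.d.\ copies of $\bbX_1$. Using $\sup_n \dhnorm{v_n} < \infty$, a direct estimate gives $\norminf{\Phi} \ll p^2/n$ and $\sdsemi{\Phi}{j} \ll p/n$ uniformly in $j$, so $\sum_{j=0}^{p-1}\sdsemi{\Phi}{j} \ll p^2/n$. For any bounded Lipschitz $\phi\map{\R^{d\times d}}{\R}$ with $\norminf{\phi}, \Lip(\phi) \le 1$, apply Lemma~\ref{lemma:fcb_weak_dep_multidim} to $F(y_1,\ldots,y_m) = \phi(\sum_i y_i)$; the gaps between consecutive big blocks are $\ell_{i+1}-u_i = q+1$, so
\[ \Bigl|\E_{\mu_n}\bigl[\phi\bigl(\textstyle\sum_{i=1}^m \bbX_i\bigr)\bigr] - \E\bigl[\phi\bigl(\textstyle\sum_{i=1}^m \widehat{\bbX}_i\bigr)\bigr]\Bigr| \ll mq^{-\gamma}\bigl(1 + mp^2/n\bigr) \ll n^{1-a-b\gamma}(1+n^a) \ll n^{1-b\gamma}, \]
which vanishes because $b\gamma > 1$. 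This is the decisive step: the hypothesis $b\gamma > 1$ is tuned precisely so that the weak-dependence gain beats the loss from the separately \holder{} seminorms of $\Phi$.

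Finally, for the LLN on the i.i.d.\ side, Lemma~\ref{lemma:moment_bds}(ii) gives $\pnorm{\bbX_1}_\gamma = n^{-1}\pnorm{\bbS_{v_n}(p,n)}_\gamma \ll p/n \ll n^{a-1}$. When $\gamma \ge 2$, the variance of $\sum_i(\widehat{\bbX}_i - \E\bbX_1)$ is at most $m\pnorm{\bbX_1}_2^2 \le m\pnorm{\bbX_1}_\gamma^2 \ll n^{a-1} \to 0$; when $1 < \gamma < 2$, the von Bahr--Esseen inequality yields $\E|\sum_i(\widehat{\bbX}_i - \E\bbX_1)|^\gamma \ll m\pnorm{\bbX_1}_\gamma^\gamma \ll n^{-(\gamma-1)(1-a)} \to 0$. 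In either case $\sum_{i=1}^m \widehat{\bbX}_i \to tE$ in probability. Combining with the independent approximation gives $\E_{\mu_n}[\phi(\sum_i\bbX_i)] \to \phi(tE)$ for every bounded Lipschitz $\phi$, which is convergence in $\mu_n$-distribution to the constant $tE$ and hence $\sum_i\bbX_i \rightarrow_{\mu_n} tE$. The main subtlety is the regime $\gamma \in (1,2)$, where direct $L^2$ bounds on $\bbX_1$ are not available from Lemma~\ref{lemma:moment_bds} and the von Bahr--Esseen inequality must be invoked instead.
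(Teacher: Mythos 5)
Your proof is correct and follows essentially the same strategy as the paper's: convergence of the mean via Proposition~\ref{prop:uniform_conv_var}, coupling $(\bbX_i)$ to an i.i.d.\ sequence via Lemma~\ref{lemma:fcb_weak_dep_multidim} with gap $q+1$ (giving the error $\ll n^{1-b\gamma}$), and control of the i.i.d.\ sum by von Bahr--Esseen together with the $L^\gamma$ bound from Lemma~\ref{lemma:moment_bds}. The only cosmetic difference is that the paper bounds $\Ex[\mu_n]{|\sum_i(\bbX_i-\Ex[\mu_n]{\bbX_i})|}$ directly by taking $F(z)=|\sum_i(z_i-\Ex[\mu_n]{\bbX_i})|$, whereas you test against an arbitrary bounded Lipschitz $\phi$ applied to the sum and split into the cases $\gamma\ge 2$ and $1<\gamma<2$ in place of the paper's ``without loss of generality take $\gamma\le 2$''.
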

\begin{proof}First note that $ [kt]/n\sim t/p $. Hence by Proposition~\ref{prop:uniform_conv_var} and the fact that $ \lim_{n\rightarrow\infty}E_n=E $,
	\begin{align*}
		\lim_{n\rightarrow\infty}\sum_{i=1}^{[kt]}\Ex[\mu_n]{\bbX_i}&=\lim_{n\rightarrow\infty}[kt]\Ex[\mu_n]{\bbX_1}=\lim_{n\rightarrow\infty}\Ex[\mu_n]{\frac{t}{p}\sum_{0\le r<s< p}v_n\circ \tf_n^r \otimes v_n\circ \tf_n^s}\\
		&=\lim_{n\rightarrow \infty}tp^{-1}\Ex[\mu_n]{\bbS_{v_n}(p,n)} = tE.
	\end{align*}
	It remains to show that
	\[ \lim_{n\rightarrow \infty}\Ex[\mu_n]{\Bigg|\sum_{i=1}^{[kt]}(\bbX_i-\Ex[\mu_n]{\bbX_i})\Bigg|}=0. \]
	Write $ \bbX_i(x)=\varPhi_i(\tf_n^{\ell_i}x,\dots,\tf_n^{u_i}x) $ where $ \ell_i=(i-1)(p+q)$, $u_i=\ell_i+p-1$ and
	\[ \varPhi_i(y_0,\dots,y_{u_i-\ell_i})=\frac{1}{n}\sum_{0\le r<s\le u_i-\ell_i}v_n(y_r)\otimes v_n(y_s). \]
	Let $ R=\max_i \norminf{\varPhi_i} $ and define $ F\map{B_{\R^{d\times d}}(0,R)^{[kt]}}{\R} $ by $ F(z_1,\dots,z_{[kt]})=|\sum_{i=1}^{[kt]}(z_i - \Ex[\mu_n]{\bbX_i})| $. 
	
	Let $ (\widehat\bbX_i) $ be independent copies of $ (\bbX_i) $. By Lemma~\ref{lemma:fcb_weak_dep_multidim},
	\[ \Ex[\mu_n]{\bigg|\sum_{i=1}^{[kt]}(\bbX_i-\Ex[\mu_n]{\bbX_i})\bigg|}\le A+\Ex{\bigg|\sum_{i=1}^{[kt]}(\widehat\bbX_i-\Ex[\mu_n]{\bbX_i})\bigg|} \]
	where 
	\[ |A|\le C\sum_{r=1}^{[kt]-1}(\ell_{r+1}-u_r)^{-\gamma}\biggl(\norminf{F}+\Lip(F)\sum_{i=1}^{[kt]}\sum_{j=0}^{u_i-\ell_i}\sdsemi{\varPhi_i}{j}\biggr). \]
	Note that $ \norminf{\varPhi_i}\le \frac{p^2}{n}\norminf{v_n}^2 $. By a similar calculation to the bound on $ \sdsemi{\Phi_i}{j} $ in the proof of \cite[Lemma 5.5]{fleming2022}, $ \sdsemi{\varPhi_i}{j}\le \frac{1}{n}(u_i-\ell_i)\dhnorm{v_n}^2=\frac{p-1}{n}\dhnorm{v_n}^2$. 
	
	Let $ z=(z_1,\dots,z_{[kt]}), z'=(z'_1,\dots,z'_{[kt]})\in (\R^{d\times d})^{[kt]} $. Then
	\[ |F(z)-F(z')|=\bigg|\sum_{i=1}^{[kt]}z_i-z'_i \bigg|\le \sum_{i=1}^{[kt]}|z_i-z'_i|=|z-z'| \]
	so $ \Lip(F)\le 1 $. Moreover, 
	\[ \norminf{F}\le \sum_{i=1}^{[kt]}(R+|\Ex[\mu_n]{\bbX_i}|)\le \sum_{i=1}^{[kt]}(R+\norminf{\varPhi_i})\le \frac{2kp^2}{n}\norminf{v_n}^2 . \]
	Now $ pk\le n $ and $ \ell_{r+1}-u_r =  q+1\ge n^{b} $ so
	\begin{align*}
		|A|&\le Ckq^{-\gamma}\left(\frac{2kp^2}{n}\norminf{v_n}^2+\frac{kp^2}{n}\dhnorm{v_n}^2\right)\ll \frac{k^2 p^2}{n} q^{-\gamma}\le  nq^{-\gamma}\\
		&\ll n^{1-b\gamma}=o(1).
	\end{align*}
	It remains to prove that $ \Ex{|\sum_{i=1}^{[kt]}(\widehat\bbX_i-\Ex[\mu_n]{\bbX_i})|}\rightarrow 0 $. Without loss of generality take $ \gamma\le 2. $ By von Bahr-Esseen's inequality,
	\[ \Ex{\bigg|\sum_{i=1}^{[kt]}(\widehat\bbX_i-\Ex[\mu_n]{\bbX_i})\bigg|}\le \pnorm{\sum_{i=1}^{[kt]}(\widehat{\bbX}_i-\Ex[\mu_n]{\bbX_i})}_\gamma
	\ll\left(\sum_{i=1}^{[kt]}\pnorm{\widehat{\bbX}_i-\Ex{\bbX_i}}_\gamma^\gamma\right)^{1/\gamma}. \]
	Now by Lemma~\ref{lemma:moment_bds}, 
	\begin{align*}
		\pnorm{\widehat{\bbX}_i-\Ex[\mu_n]{\bbX_i}}_\gamma&=\pnorm{\bbX_i-\Ex[\mu_n]{\bbX_i}}_\gamma\le 2\pnorm{\bbX_i}_\gamma\\
		&=\frac{2}{n}\pnorm{\sum_{0\le r<s< p}v_n\circ \tf_n^r \otimes v_n\circ \tf_n^s}_\gamma=O(p/n)
	\end{align*}
	so
	\[ \Ex{\bigg|\sum_{i=1}^{[kt]}(\widehat\bbX_i-\Ex[\mu_n]{\bbX_i})\bigg|}\ll ([kt](p/n)^{\gamma})^{1/\gamma}\ll k^{(1-\gamma)/\gamma}=o(1), \]
	as required.
\end{proof}
\begin{lemma}\label{lemma:iterated_clt_non_diagonal}
	Suppose that $ a+\gamma b>2$. Then
	\[ (\widetilde W_n,\widetilde \bbW_n)\rightarrow_{\mu_n} \left(W,\int W\otimes dW\right) \text { in }D([0,1],\R^d\times \R^{d\times d}). \]
\end{lemma}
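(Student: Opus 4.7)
The strategy is to decouple the big blocks using the Functional Correlation Bound, reducing to an i.i.d.\ setting, and then invoke the classical iterated invariance principle. Let $\widehat X_1,\dots,\widehat X_{[kt_\ell]}$ be independent copies of $X_1,\dots,X_{[kt_\ell]}$; by $T_n$-invariance of $\mu_n$ all $X_i$ share a common law, so the $\widehat X_i$ are i.i.d. Let $\widehat W_n,\widehat \bbW_n$ denote the processes obtained by substituting $\widehat X_i$ for $X_i$ throughout. It suffices to show the decoupling error vanishes and then to prove the finite-dimensional convergence $(\widehat W_n(t_j),\widehat \bbW_n(t_j))_{j\le \ell}\to_d(W(t_j),\int_0^{t_j}W\otimes dW)_{j\le\ell}$.

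For the decoupling step, fix a bounded Lipschitz test function $F$ on $(\R^d)^\ell\times(\R^{d\times d})^\ell$ and times $0\le t_1\le\cdots\le t_\ell\le 1$. Form the composite $\tilde F\colon B(0,R)^{[kt_\ell]}\to\R$,
\[ \tilde F(z_1,\dots,z_{[kt_\ell]})=F\!\left(\bigl(\textstyle\sum_{i\le [kt_j]}z_i\bigr)_j,\bigl(\textstyle\sum_{i_1<i_2\le [kt_j]}z_{i_1}\otimes z_{i_2}\bigr)_j\right), \]
with $R=\max_i\norminf{\varPhi_i}\le p\,n^{-1/2}\norminf{v_n}$. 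A direct estimate gives $\norminf{\tilde F}\le\norminf{F}$ and $\Lip(\tilde F)\le C\Lip(F)(1+kR)$, the factor $kR$ arising from the bilinear iterated-integral component. Writing each $X_i=\varPhi_i(\tf_n^{\ell_i}x,\dots,\tf_n^{u_i}x)$ with $\sdsemi{\varPhi_i}{j}\le n^{-1/2}\dhnorm{v_n}$, Lemma~\ref{lemma:fcb_weak_dep_multidim} bounds the decoupling error by
\[ Ck q^{-\gamma}\bigl(\norminf{F}+\Lip(F)(1+kR)kp\,n^{-1/2}\dhnorm{v_n}\bigr). \]
Substituting $k\sim n^{1-a}$, $q\sim n^b$, $kR\sim n^{1/2}$ and $kpn^{-1/2}\sim n^{1/2}$, the dominant term has order $n^{2-a-b\gamma}$, which is $o(1)$ exactly under $a+\gamma b>2$.

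Convergence in the i.i.d.\ case is then straightforward. The single-integral marginals converge via a multidimensional Lindeberg--Feller CLT: Proposition~\ref{prop:uniform_conv_var} and the hypothesis $\Sigma_n\to\Sigma$ yield $\sum_{i\le[kt]}\Var(\widehat X_i)=[kt]p\Sigma_n/n+o(1)\to t\Sigma$, while the Lindeberg condition follows from $|\widehat X_i|\le R=O(n^{a-1/2})\to 0$. Rewriting
\[ \widehat \bbW_n(t)=\sum_{j=1}^{[kt]}\widehat W_n\bigl((j-1)/k\bigr)\otimes \widehat X_j \]
identifies $\widehat \bbW_n$ as a left-endpoint Riemann sum of the piecewise-constant process $\widehat W_n$; joint convergence to the It\^o integral $\int_0^\cdot W\otimes dW$ then follows from the classical iterated invariance principle for i.i.d.\ random walks, the absence of a Stratonovich correction being guaranteed by the independence and centring of the increments. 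The principal technical obstacle lies in the decoupling step: the Lipschitz constant $\Lip(\tilde F)$ blows up like $O(kR)$ because of the bilinear iterated-integral term, and it is the careful accounting of this blow-up against the decay rate $q^{-\gamma}$ that pins down precisely the hypothesis $a+\gamma b>2$.
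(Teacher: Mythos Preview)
Your decoupling step via Lemma~\ref{lemma:fcb_weak_dep_multidim} is essentially the paper's argument, and the bookkeeping yielding the dominant error term $n^{2-a-b\gamma}$ is correct. The substantive problem is in the i.i.d.\ half of the argument: you verify Lindeberg's condition by asserting $|\widehat X_i|\le R=O(n^{a-1/2})\to 0$, but this requires $a<\tfrac12$, which is not assumed. Under the sole hypothesis $a+\gamma b>2$ with $0<b<a<1$, one has $a<\tfrac12$ only if $\gamma>3$; and in the actual application (Theorem~\ref{thm:iterated_wip}) one also needs $a>\tfrac{b+1}{2}>\tfrac12$ from Proposition~\ref{prop:W_n_tilde_W_n_approx}, so $R\to\infty$ there. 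The paper bypasses this by verifying a Lyapunov condition instead: using the moment bound of Lemma~\ref{lemma:moment_bds}(i), one gets $\sum_{i=1}^{k}\Ex{|\widehat X_i|^{2\gamma}}\ll k n^{-\gamma}p^\gamma\le k^{1-\gamma}=o(1)$, which feeds into Lemma~\ref{lemma:lindeberg_wip}. Your $L^\infty$ bound on $X_i$ is not tight enough for this purpose.

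There is also a smaller gap relative to the \emph{statement} of the lemma: you only decouple and then argue at the level of finite-dimensional distributions (your test function $F$ lives on $(\R^d)^\ell\times(\R^{d\times d})^\ell$), whereas the lemma asserts convergence in $D([0,1],\R^d\times\R^{d\times d})$. The paper takes the test function $G$ directly on $D$, composes with the map $\pi_k\colon B(0,R)^k\to D$, and then invokes Lemma~\ref{lemma:lindeberg_wip} (which already gives functional convergence via Kurtz--Protter). Your decoupling estimate transfers to this setting without change, so this is easily repaired; but as written your argument only delivers convergence of finite-dimensional distributions, and your appeal to ``the classical iterated invariance principle for i.i.d.\ random walks'' for the joint functional limit is left unjustified.
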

\begin{proof}
	By the portmanteau theorem \cite[p.53]{bogachev_weak_conv}, it suffices to show that 
	\[ \lim_{n\rightarrow \infty}\Ex[\mu_n]{G(\widetilde W_n,\widetilde \bbW_n)}=\Ex{G\left(W,\int W\otimes dW\right)} \]
	for all Lipschitz, bounded $ G\map{D([0,1],\R^d\times \R^{d\times d})}{\R} $. 
	
	Note that
	\[ X_i(x)=\varPhi_i(\tf_n^{\ell_i}x,\dots,\tf_n^{u_i}x) \]
	where $ \ell_i=(i-1)(p+q)$, $u_i=\ell_i+p-1 $ and \[ \varPhi_i(y_0,\dots,y_{u_i-\ell_i})=n^{-1/2}\sum_{r=0}^{u_i-\ell_i}v_n(y_r). \]
	Let $ 0\le r\le u_i-\ell_i $. Then $ \sdsemi{\varPhi_i}{r}\le n^{-1/2}\dsemi{v_n}.$ Let $ R=\max_i \norminf{\varPhi_i}\le pn^{-1/2}\norminf{v_n}.$ Define $ \pi_k\map{B(0,R)^k}{D([0,1],\R^d\times \R^{d\times d})} $ by
	\[ \pi_k(x_1,\dots,x_k)(t)=\left(\sum_{i=1}^{[kt]}x_i,\sum_{1\le i<j\le {[kt]}}x_i\otimes x_j\right) \]
	for $ t\in [0,1] $. Then $ (\stilde W_n,\stilde \bbW_n)=\pi_k(X_1,\dots,X_k) $. 
	
	Now for all $ (x_1,\dots,x_k),(x'_1,\dots,x'_k)\in B(0,R)^k $,
	\[ \sup_{t\in[0,1]}\Bigg|\sum_{i=1}^{[kt]}x_i-\sum_{i=1}^{[kt]}x'_i\Bigg|\le \sum_{i=1}^k|x_i-x_i'| \]
	and 
	\begin{align*}
		\sup_{t\in[0,1]}\Bigg|\sum_{1\le i<j\le {[kt]}}(x_i\otimes x_j-x'_i\otimes x'_j)\Bigg|
		&\le\sum_{1\le i<j\le k}|x_i \otimes x_j-x'_i\otimes x'_j|\\
		&\le \sum_{1\le i<j\le k}|x_i\otimes (x_j-x'_j)|+|(x_i-x'_i)\otimes x'_j|\\
		&\le 2kR\sum_{j=1}^{k}|x_j-x'_j|.
	\end{align*}
	Thus $ \Lip(\pi_k)\le 1+2kR. $
	
	Let $ (\shat X_i) $ be independent copies of $ (X_i) $ and define 
	\[  (\shat W_n,\shat \bbW_n)(t)=\pi_k(\shat X_1,\dots,\shat X_k)(t)=\left(\sum_{1\le i\le[kt]}\shat X_i,\sum_{1\le i<j\le {[kt]}}\shat X_i\otimes \shat X_j\right).  \]
	By Lemma~\ref{lemma:fcb_weak_dep_multidim},
	$|\Ex[\mu_n]{G(\stilde W_n,\stilde \bbW_n)}-\Ex{G(\shat W_n,\shat \bbW_n)}|\le A,$
	where
	\begin{align*}
		A &= C\sum_{r=1}^{k-1}(\ell_{r+1}-u_r)^{-\gamma}\biggl(\norminf{G\circ \pi_k}+\Lip(G\circ \pi_k)\sum_{i=1}^{k}\sum_{j=0}^{u_i-\ell_i}\sdsemi{\varPhi_i}{j}\biggr)\\
		&\le C\sum_{r=1}^{k-1}(\ell_{r+1}-u_r)^{-\gamma}\biggl(\norminf{G}+(1+2kR)\Lip(G)\sum_{i=1}^{k}\sum_{j=0}^{u_i-\ell_i}\sdsemi{\varPhi_i}{j}\biggr)\\
		&\le Ck(q+1)^{-\gamma}(\norminf{G}+(1+2kpn^{-1/2}\norminf{v_n})\Lip(G) kp n^{-1/2}\dsemi{v_n})\\
		&\ll n^{1-a}n^{-b\gamma}n=n^{2-a-b\gamma}=o(1).
	\end{align*}
	
	It remains to show that 
	\[(\shat W_n,\shat\bbW_n)\rightarrow_{\mu_n} \left(W,\int W\otimes dW\right) \text { in }D([0,1],\R^d\times \R^{d\times d}).  \]
	Indeed, once we have proved this it follows that
	\begin{align*}
		\lim_{n\rightarrow \infty}\Ex[\mu_n]{G(\stilde W_n,\stilde\bbW_n)}=\lim_{n\rightarrow \infty}\Ex{G(\shat W_n,\shat\bbW_n)}=\Ex{G\left(W,\int W\otimes dW\right)},
	\end{align*}
	completing the proof of this lemma.
	
	Note that $ n/k\sim p $. Thus by Proposition~\ref{prop:uniform_conv_var} and the fact that $ \lim_{n\rightarrow\infty}\Sigma_n=\Sigma $, 
	\begin{align*}
		\lim_{n\rightarrow \infty}\sum_{i=1}^{[kt]}\Ex{\shat X_i \otimes \shat X_i}&=\lim_{n\rightarrow \infty}[kt]\Ex{\shat X_1\otimes \shat X_1}\\
		&=\lim_{n\rightarrow \infty}\frac{[kt]}{n}\Ex[\mu_n]{\sum_{r=0}^{p-1}v_n\circ \tf_n^r\otimes \sum_{r=0}^{p-1}v_n\circ \tf_n^r}\\
		&=\lim_{n\rightarrow \infty}\frac{t}{p}\Ex[\mu_n]{S_{v_n}(p,n)\otimes S_{v_n}(p,n)}=\Sigma t.
	\end{align*}
	Hence hypothesis (i) of Lemma~\ref{lemma:lindeberg_wip} is satisfied with $ \chi_{n,i}=\shat X_i $. Now by Lemma~\ref{lemma:moment_bds},
	\begin{align*}
		\sum_{i=1}^{k}\Ex{|\shat X_i|^{2\gamma}}&=kn^{-\gamma}\Ex[\mu_n]{\bigg|\sum_{r=0}^{p-1}v_n\circ\tf_n^r\bigg|^{2\gamma}}\ll kn^{-\gamma}p^\gamma \\
		&\le k(kp)^{-\gamma}p^\gamma=k^{1-\gamma}=o(1)
	\end{align*}
	so hypothesis~(ii) of Lemma~\ref{lemma:lindeberg_wip} is satisfied with $ p=\gamma $. This completes the proof.
\end{proof}
We are now ready to prove the iterated WIP (Theorem~\ref{thm:iterated_wip}).
\begin{proof}[Proof of Theorem~\ref{thm:iterated_wip}]
	Since $ \gamma>1 $, we can choose $ 0<b<a<1 $ such that $ b>\gamma^{-1} $, $ a>\frac{b+1}{2} $ and $ a+\gamma b>2 $. Then the conditions of Proposition~\ref{prop:W_n_tilde_W_n_approx} and Lemmas~\ref{lemma:lln_diagonal} and~\ref{lemma:iterated_clt_non_diagonal} are satisfied. Let $ 0\le t_1, t_2, \dots , t_\ell\le 1$, $ \ell\ge 1 $. Write
	\[ \big((W_n,\bbW_n)(t_1),(W_n,\bbW_n)(t_2),\dots,(W_n,\bbW_n)(t_\ell)\big)=K_1+K_2+K_3, \]
	where
	\begin{align*}
		K_1&=(A(t_1),A(t_2),\dots,A(t_\ell)),\\
		K_2&=\Biggl(\biggl(0,\sum_{i=1}^{[kt_1]}\bbX_i\biggr),\biggl(0,\sum_{i=1}^{[kt_2]}\bbX_i\biggr),\dots,\biggl(0,\sum_{i=1}^{[kt_{\ell}]}\bbX_i\biggr)\Biggr),\\
		K_3&=\big((\stilde W_n,\stilde\bbW_n)(t_1),(\stilde W_n,\stilde \bbW_n)(t_2),\dots,(\stilde W_n,\stilde \bbW_n)(t_\ell)\big).
	\end{align*}
	Here $ A(t)=(W_n(t)-\stilde W_n(t),\bbW_n(t)-\stilde\bbW_n(t)-\sum_{i=1}^{[kt]}\bbX_i). $
	
	By Proposition~\ref{prop:W_n_tilde_W_n_approx}, $ K_1\rightarrow_{\mu_n} 0 $ and by Lemma~\ref{lemma:lln_diagonal}, 
	\[ K_2\rightarrow_{\mu_n} ((0,t_1 E),(0,t_2 E),\dots,(0,t_\ell E)). \]
	Moreover, by Lemma~\ref{lemma:iterated_clt_non_diagonal},
	\[ K_3 \rightarrow_{\mu_n}\! \Big(\Big(W(t_1),\int_0^{t_1}W\otimes dW\Big),\Big(W(t_2),\int_0^{t_2}W\otimes dW\Big),\ldots,\Big(W(t_\ell),\int_0^{t_\ell}W\otimes dW\Big)\Big). \]
	Hence by Slutsky's theorem,
	\[ K_1+K_2+K_3 \rightarrow_{\mu_n}\big((W,\bbW)(t_1),(W,\bbW)(t_2),\dots,(W,\bbW)(t_\ell)\big), \]
	as required.
\end{proof}
\subsection{Proof of Theorem~\ref{thm:homog_from_fcb_family}}\label{subsection:pf_main_result}
We now have all the ingredients needed to prove Theorem~\ref{thm:homog_from_fcb_family}. 
\begin{proof}[Proof of Theorem~\ref{thm:homog_from_fcb_family}]
We proceed by applying~\cite[Theorem~2.17]{chevyrev2022deterministic}, so we need to check Assumptions 2.11 and 2.12 from~\cite{chevyrev2022deterministic}. 

By the arguments in the proof of~\cite[Proposition 3.9]{korepanov2022deterministic}, Assumptions 2.11 and 2.12(ii)(a) follow from~\eqref{eq:conv_correlation_fns} and Lemma~\ref{lemma:moment_bds}. Since $ \mu_n $ is $ T_n $-invariant for all $ n $, Assumption 2.12(i) also follows from Lemma~\ref{lemma:moment_bds} (cf.\ \cite[Remark 2.13]{chevyrev2022deterministic}). 

It remains to verify Assumption 2.12(ii)(b). Let $ v_n\in \holsp(M,\R^d) $, $ n\in \N\cup\{\infty\} $, with $ \Ex[\mu_n]{v_n}=0 $ and $\sup_{n\ge1}\dhnorm{v_n}<\infty$. We assume that $ \lim_{n\to \infty}\norminf{v_n-v_\infty}=0$. Define $ \Sigma_n $ and $ E_n $ as in~\eqref{eq:covar_drift_formulas}. It suffices to prove that $ \lim_{n\to\infty}\Sigma_n=\Sigma_\infty $ and $ \lim_{n\to\infty}E_n=E_\infty $. Assumption 2.12(ii)(b) then follows from Theorem~\ref{thm:iterated_wip}, with
\[ \mathfrak{B}_1(v,w)=\Ex[\mu_\infty]{vw},\quad \mathfrak{B}_2(v,w)=\sum_{\ell\ge 1}^\infty \Ex[\mu_\infty]{vw\circ T_\infty^\ell}.  \]
We show that $ \lim_{n\to\infty}E_n=E_\infty $; the proof that $ \lim_{n\to\infty}\Sigma_n = \Sigma_\infty $ is similar. By Proposition~\ref{prop:uniform_conv_var}, the series $ E_n = \sum_{\ell\ge 1}\Ex[\mu_n]{v_n\otimes v_n\circ \tf_n^\ell} $ is convergent uniformly in $ n $. Hence it suffices to show that $ \lim_{n\rightarrow \infty}\Ex[\mu_n]{v_n\otimes v_n\circ \tf_n^\ell}=\Ex[\mu_\infty]{v_\infty\otimes v_\infty\circ \tf_\infty^\ell} $ for each fixed $ \ell\ge 1 $. Write $ \Ex[\mu_n]{v_n\otimes v_n\circ \tf_n^\ell}-\Ex[\mu_\infty]{v_\infty\otimes v_\infty\circ \tf_\infty^\ell}=A(n)+B(n) $, where
\begin{align*}
	A(n)&=\Ex[\mu_n]{v_n\otimes v_n\circ \tf_n^\ell-v_\infty\otimes v_\infty\circ \tf_n^\ell},\\
	 \quad B(n)&=\Ex[\mu_n]{v_\infty\otimes v_\infty\circ \tf_n^\ell}-\Ex[\mu_\infty]{v_\infty\otimes v_\infty\circ \tf_\infty^\ell}.
\end{align*}
Now, 
\[ \norminf{v_n\otimes v_n\circ \tf_n^\ell-v_\infty\otimes v_\infty\circ \tf_n^\ell}\le \norminf{v_n-v_\infty}\norminf{v_n}+\norminf{v_\infty}\norminf{v_n-v_\infty} \]
so $ A(n)\to 0 $ as $ n\to\infty $. Finally, by~\eqref{eq:conv_correlation_fns}, we have $ B(n)\to 0 $.
\end{proof}
%
%
\section{Families of nonuniformly hyperbolic maps}\label{section:families_nuh_maps}
In this section, we consider uniform families of nonuniformly hyperbolic maps. Our main result, Theorem~\ref{thm:uniform_fcb}, is that such families satisfy the \fcb{} with uniform rate. In Subsection~\ref{subsection:nuh_map_def}, we recall the definition of a nonuniformly hyperbolic map. In Subsection~\ref{subsection:explicit_fcb}, we prove some estimates for a fixed nonuniformly hyperbolic map. In Subsection~\ref{subsection:uniform_families}, we describe our uniformity criteria for families of nonuniformly hyperbolic maps and prove Theorem~\ref{thm:uniform_fcb}.
\subsection{Nonuniformly hyperbolic maps}\label{subsection:nuh_map_def}
In this subsection, we recall the notion of a nonuniformly hyperbolic map in the sense of Young \cite{young1998statistical,young1999recurrence}. The definition we use is based on \cite{korepanov2019explicit} and \cite{bmtmaps}. In particular, we do not assume uniform contraction along stable manifolds.

\quad\textbf{Gibbs-Markov maps}\quad Let \((\bar Y,\bar\mu_Y)\) be a probability space and let $\bar F\map{\bar Y}{\bar Y}$ be ergodic and measure-preserving. Let $\alpha$ be an at most countable, measurable partition of $\bar Y$. We assume that there exist constants $D_0>0,\ \theta\in (0,1)$ such that for all elements $a\in \alpha$:
\begin{itemize}
	\item (Full-branch condition) The map $\bar F|_{a}\map{a}{\bar Y}$ is a measurable bijection.
	\item For all distinct $y,y'\in \bar Y$ the separation time 
	\begin{equation*}
			s(y,y')=\inf\{n\ge 0: \bar F^n y,\, \bar F^n y' \text{ lie in distinct elements of }\alpha\}<\infty.
		\end{equation*}
	\item \label{item:GM_bdd_distortion} Define $\zeta\map{a}{\R^+}$ by $\zeta=d\bar\mu_Y/(d\, (F|_a^{-1})_* \bar\mu_Y)$. We have $ |\log \zeta(y)-\log \zeta(y')|\le D_0\theta^{s(y,y')} $ for all $ y,y'\in a $.
\end{itemize}
Then we call $\bar{F}\map{\bar Y}{\bar Y}$ a full-branch Gibbs-Markov map.

\quad\textbf{Two-sided Gibbs-Markov maps}\quad Let $(Y,d)$ be a bounded metric space with Borel probability measure $\mu_Y$ and let $F\map{Y}{Y}$ be ergodic and measure-preserving. Let $\bar{F}\map{\bar{Y}}{\bar{Y}}$ be a full-branch Gibbs-Markov map with associated measure $\bar\mu_Y$.

We suppose that there exists a measure-preserving semi-conjugacy $\bar{\pi}\map{Y}{\bar{Y}}$, so $\bar{\pi}\circ F=\bar{F}\circ \bar{\pi}$ and $\bar{\pi}_{*}\mu_Y=\bar{\mu}_Y.$ The separation time $s(\cdot,\cdot)$ on $\bar Y$ lifts to a separation time on $Y$ given by $s(y,y')=s(\bar\pi y,\bar\pi y')$. Suppose that there exist constants $D_0>0$, $\theta\in(0,1)$ such that
\begin{equation}\label{eq:two_sided_GM_contraction}
	d(F^k y,F^k y')\le D_0(\theta^k+\theta^{s(y,y')-k}) \text{ for all }y,y'\in Y,k\ge 0.
\end{equation}
Then we call $F\map{Y}{Y}$ a \textit{two-sided Gibbs-Markov map.}

\quad\textbf{One-sided Young towers}\quad Let $\bar\phi\map{\bar Y}{\Z^+}$ be integrable and constant on partition elements of $\alpha$. We define the one-sided Young tower $\bar\Delta=\bar Y^{\bar\phi}$ and tower map $\bar f\map{\bar \Delta}{\bar \Delta}$ by  
\begin{equation}\label{eq:one-sided_yt_def}
	\bar\Delta=\{(\bar y,\ell)\in \bar Y\times \Z: 0\le \ell<\bar \phi(y)\},\ \bar{f}(\bar y,\ell)=\begin{cases}
			(\bar y,\ell+1),& \ell<\bar\phi(y)-1,\\
			(\bar{F}\bar{y},0),& \ell=\bar\phi(y)-1.
		\end{cases}
\end{equation}
We extend the separation time $s(\cdot,\cdot)$ to $\bar\Delta$ by defining
\[s((\bar y,\ell),(\bar y',\ell'))=\begin{cases}
		s(\bar{y},\bar{y}'),& \ell=\ell',\\
		0,& \ell\ne \ell'.
	\end{cases}\]
	Note that for $\theta\in (0,1)$ we can define a metric by $d_\theta(\bar{p},\bar{q})=\theta^{s(\bar{p},\bar{q})}$.
	
	Now, $\bar{\mu}_\Delta=(\bar{\mu}_Y \times \text{counting})/\int_{\bar{Y}}\bar{\phi} d\bar{\mu}_Y$ is an ergodic $\bar{f}$-invariant probability measure on $\bar{\Delta}$.
	\vspace{1em}
	
	\textbf{Two-sided Young towers}\quad Let $F\map{Y}{Y}$ be a two-sided Gibbs-Markov map and let $\phi\map{Y}{\Z^+}$ be an integrable function that is constant on $\bar{\pi}^{-1}a$ for each $a\in \alpha$. In particular, $\phi$ projects to a function $\bar{\phi}\map{\bar{Y}}{M}$ that is constant on partition elements of $\alpha$.
	
	Define the one-sided Young tower $\bar{\Delta}=\bar{Y}^{\bar{\phi}}$ as in \eqref{eq:one-sided_yt_def}. Using $\phi$ in place of $\bar{\phi}$ and $F\map{Y}{Y}$ in place of $\bar{F}\map{\bar{Y}}{\bar{Y}}$, we define the \textit{two-sided Young tower} $\Delta=Y^{\phi}$ and tower map $f\map{\Delta}{\Delta}$ in the same way. Likewise, we define an ergodic $ f $-invariant probability measure on $\Delta$ by $\mu_\Delta=(\mu_Y \times \text{counting})/\int_{Y}\phi\, d\mu_Y$. 
	
	We extend $\bar{\pi}\map{Y}{\bar{Y}}$ to a map $\bar{\pi}\map{\Delta}{\bar{\Delta}}$ by setting $\bar{\pi}(y,\ell)=(\bar{\pi}y,\ell)$ for all $(y,\ell)\in \Delta$. Note that $\bar{\pi}$ is a measure-preserving semi-conjugacy; $\bar{\pi}\circ f=\bar{f}\circ \bar{\pi}$ and $\bar{\pi}_{*}\mu_\Delta=\bar{\mu}_\Delta$. The separation time $s$ on $\bar{\Delta}$ lifts to $\Delta$ by defining $s(y,y)=s(\bar{\pi}y,\bar{\pi}y').$
	
	Let $ \tf\map{\ms}{\ms} $ be a measure-preserving transformation on a probability space $ (\ms,\mu) $. Suppose that there exists $Y\subset M$ measurable with $\mu(Y)>0$ such that:
	\begin{itemize}
		\item $F=T^{\phi}\map{Y}{Y}$ is a two-sided Gibbs-Markov map with respect to some probability measure $\mu_Y$.
		\item $\phi$ is constant on partition elements of $\bar{\pi}^{-1}\alpha$, so we can define Young towers $\Delta=Y^\phi$ and $\bar\Delta=\bar{Y}^{\bar{\phi}}$.
		\item There exist constants $ D_0>0 $ and $ \theta\in(0,1) $ such that for all $ y,y'\in Y $, $ 0\le \ell <\phi(y)$ we have
		\begin{equation}\label{eq:yt_intermediate_iterate_bd}
				d(\tf^\ell y,\tf^\ell y')\le D_0(d(y,y')+\theta^{s(y,y')})
			\end{equation}
		\item The map $\pi_M\map{\Delta}{M}$, $\pi_M(y,\ell)=T^\ell y$ is a measure-preserving semiconjugacy.
	\end{itemize}
	Then we call $\tf\map{\ms}{\ms}$ a \textit{nonuniformly hyperbolic map.}
	\begin{remark}
		Note that we have not assumed that $ \gcd\{\phi(y)\colon y\in Y\}=1 $. By assuming that $ \mu $ is mixing we are able to reduce to this case, as we briefly explain in the next subsection.
	\end{remark}
	\subsection{Explicit estimates for nonuniformly hyperbolic maps}\label{subsection:explicit_fcb}
	In this subsection, we consider a fixed nonuniformly hyperbolic map $ \tf\map{\ms}{\ms} $. We assume that $ \tf $ is mixing and that there exist constants $ \beta>1 $ and $ C_\phi>0 $ such that $ \mu_Y(\phi\ge k)\le C_\phi k^{-\beta} $ for all $ k\ge 1 $.
	\begin{defn}
		For $ v\map{\ms}{\R} $ define
		\[ [v]_{\mathcal{H}}=\sup_{y\ne y'}\sup_{0\le \ell<\phi(y)}\frac{|v(\tf^\ell y)-v(\tf^\ell y')|}{d(y,y')+\theta^{s(y,y')}}. \]
	\end{defn}
	\begin{prop}\label{prop:holder_implies_dyn_holder}
		Let $ v\map{\ms}{\R} $ be Lipschitz. Then $ [v]_{\mathcal{H}}\le D_0 \Lip(v) $.
	\end{prop}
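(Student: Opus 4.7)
The statement looks essentially immediate from the definitions, so my plan is short. The key ingredient is the intermediate iterate bound from the definition of a nonuniformly hyperbolic map, namely
\[ d(\tf^\ell y,\tf^\ell y')\le D_0\bigl(d(y,y')+\theta^{s(y,y')}\bigr) \]
for all $y,y'\in Y$ and $0\le \ell<\phi(y)$, together with the Lipschitz property of $v$.

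The plan is to fix $y,y'\in Y$ with $y\ne y'$ and $0\le \ell<\phi(y)$. Then I would simply chain the two estimates:
\[ |v(\tf^\ell y)-v(\tf^\ell y')|\le \Lip(v)\,d(\tf^\ell y,\tf^\ell y')\le D_0\Lip(v)\bigl(d(y,y')+\theta^{s(y,y')}\bigr). \]
Dividing by $d(y,y')+\theta^{s(y,y')}>0$ and taking the supremum over all admissible $y,y',\ell$ then yields $[v]_{\mathcal{H}}\le D_0\Lip(v)$, which is exactly the claim.

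There is no real obstacle here, since the bound on $d(\tf^\ell y,\tf^\ell y')$ is already built into the definition of a nonuniformly hyperbolic map and the Lipschitz estimate is a one-line application. The only thing worth remarking is that the constant $D_0$ appearing in the conclusion is the same constant from the intermediate iterate condition, so no reshuffling of constants is required. Consequently the proof should fit into a single short display.
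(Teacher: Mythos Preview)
Your proposal is correct and matches the paper's proof essentially line for line: fix $y,y'\in Y$ and $0\le \ell<\phi(y)$, apply the Lipschitz bound, then the intermediate iterate estimate \eqref{eq:yt_intermediate_iterate_bd}, and take the supremum. There is nothing to add.
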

	\begin{proof}
		Let $ y,y'\in Y $, $ 0\le \ell<\phi(y) $. By \eqref{eq:yt_intermediate_iterate_bd},
		\begin{align*}
				|v(\tf^\ell y)-v(\tf^\ell y')|&\le \Lip(v) d(\tf^\ell y,\tf^\ell y')\\
				&\le D_0 \Lip(v)(d(y,y')+\theta^{s(y,y')}).\qedhere
			\end{align*}
	\end{proof}
	Let $ q\ge 1 $. Given a function $ G\map{\ms^q}{\R} $ and $ 0\le i<q $ we denote
	\[ [G]_{\mathcal{H},i}=\sup_{x_0,\dots,x_{q-1}\in \ms}[G(x_0,\dots,x_{i-1},\cdot,x_{i+1},\dots,x_{q-1})] _{\mathcal{H}}. \]
	We call $ G $ separately dynamically \holder{} if $ \norminf{G}+\sum_{i=0}^{q-1}[G]_{\mathcal{H},i}<\infty $.
	
	We are now ready to state the main result of this subsection. Note that there exist $ \delta>0 $ and a finite set $ I\subset \mathbb{N} $ with $ \gcd \{I\}=\gcd\{\phi(y):y\in Y\} $ and $ {\mu_Y(\phi=k)}\ge \delta $ for all $ k\in I $.
	\begin{lemma}\label{lemma:fcb_explicit}There exists a constant $ C>0 $ depending continuously on $ D_0,\theta, \delta$, $\max\{I\}, \beta $ and $ C_\phi$ such that 
		for all $ 0\le p<q $, $ 0\le k_0\le \cdots\le k_{q-1} $,
		\begin{align}\label{eq:dyn_holder_fcb}
			&\bigg|\int_\ms  G(\tf^{k_0}x,\dots,\tf^{k_{q-1}}x)d\mu(x)\nonumber\\
			&\qquad\qquad\qquad -\int_{\ms^2} G(\tf^{k_0}x_0,\dots,\tf^{k_{p-1}}x_0,\tf^{k_p}x_1,\dots,\tf^{k_{q-1}} x_1)d\mu(x_0)d\mu(x_1)\bigg|	\nonumber\\
			&\qquad\le C(k_p-k_{p-1})^{-(\beta-1)}\biggl(\norminf{G}+\sum_{i=0}^{q-1} [G]_{\mathcal{H},i}\biggr)
			\end{align}
		for any separately dynamically \holder{} $ G\map{\ms^q}{\R} $.
	\end{lemma}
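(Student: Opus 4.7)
The plan is to reduce the functional correlation bound on $ \ms $ to the corresponding estimate on the one-sided quotient Young tower $ \bar\Delta $, where classical polynomial decay of correlations at rate $ k^{-(\beta-1)} $ follows from the tail assumption on $ \phi $. There are three stages: lift from $ \ms $ to the two-sided tower $ \Delta $ via the semiconjugacy $ \pi_M $, pass from $ \Delta $ to $ \bar\Delta $ via the forward-quotient projection $ \bar\pi $, and apply a two-function correlation bound on $ \bar\Delta $ after splitting the $ q $ coordinates at the large gap $ k_p - k_{p-1} $.

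First I would lift. Setting $ \widetilde G = G\circ \pi_M^{\otimes q}\colon \Delta^q \to \R $, the semiconjugacy identifies both sides of \eqref{eq:dyn_holder_fcb} with the analogous integrals on the two-sided tower. The definition of $ [\,\cdot\,]_{\mathcal{H}} $ together with the intermediate iterate bound \eqref{eq:yt_intermediate_iterate_bd} show that $ \widetilde G $ inherits separate dynamical Hölder regularity with seminorms controlled by those of $ G $.

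Next, to pass from $ \Delta $ to $ \bar\Delta $, I would approximate $ \widetilde G $ by a function that factors through $ \bar\pi^{\otimes q} $. The two-sided contraction \eqref{eq:two_sided_GM_contraction} gives $ d(f^k y, f^k y')\le D_0\theta^k $ for $ y,y' $ with $ \bar\pi y=\bar\pi y' $, so choosing intermediate times proportional to $ k_i $ (and proportional to $ (k_p-k_{p-1})/2 $ near the separating gap) the total replacement error is bounded by a geometric series in $ \theta $ times $ \sum_i [G]_{\mathcal{H},i} $. After this approximation, the two integrals in \eqref{eq:dyn_holder_fcb} are reduced to the standard two-function correlation
\[ \int \Phi\cdot\Psi\circ \bar f^{\tau}\,d\bar\mu_\Delta - \int\Phi\,d\bar\mu_\Delta\int\Psi\,d\bar\mu_\Delta, \qquad \tau=k_p-k_{p-1}, \]
where $ \Phi $ is a dynamically Hölder function on $ \bar\Delta $ arising from the ``past'' coordinates $ 0,\dots,p-1 $ and $ \Psi $ is the corresponding ``future'' observable arising from coordinates $ p,\dots,q-1 $; the norms of $ \Phi $ and $ \Psi $ are controlled by $ \norminf{G}+\sum_i [G]_{\mathcal{H},i} $.

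The remaining estimate is the standard two-point decay bound for a one-sided Young tower with polynomial tails $ \bar\mu_Y(\bar\phi\ge k)\le C_\phi k^{-\beta} $; the renewal-theoretic argument of \cite{young1999recurrence} yields decay $ O(\tau^{-(\beta-1)}) $. The main obstacle, and the reason the lemma is stated with explicit continuous dependence on $ D_0,\theta,\delta,\max I,\beta,C_\phi $, is the bookkeeping of constants. In particular the renewal estimate requires aperiodicity $ \gcd\{\phi(y):y\in Y\}=1 $; in general one reduces to this case by a blocking argument whose rate is controlled by $ \delta $ and $ \max I $. The dependence on $ (D_0,\theta) $ comes from the first two stages, on $ (\beta,C_\phi) $ from the renewal rate, and all constants combine multiplicatively along the three-step reduction.
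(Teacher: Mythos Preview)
Your three-stage reduction (lift to $\Delta$ via $\pi_M$, quotient to $\bar\Delta$ via $\bar\pi$ using \eqref{eq:two_sided_GM_contraction}, then reduce to a two-observable correlation across the gap $k_p-k_{p-1}$) is precisely the architecture of the proof in \cite{fleming2022} that the paper invokes, and the paper confirms that the constants arising in the first two stages are explicit in $D_0,\theta,\beta,C_\phi$ and in the gcd-reduction step in $\beta$ and $\max I$. So far so good.

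The gap is in your third stage. You write that ``the renewal-theoretic argument of \cite{young1999recurrence} yields decay $O(\tau^{-(\beta-1)})$'' and then treat the remaining issue as bookkeeping. This is exactly where the argument breaks: the operator renewal proof of decay on $\bar\Delta$ proceeds via spectral analysis of a perturbed family of transfer operators, and the resulting constant is \emph{not} known to depend continuously on the tower data $(D_0,\theta,\delta,\max I,\beta,C_\phi)$. The paper singles this out as the one place in \cite{fleming2022} where the constant is not explicit. What is actually needed, and what the paper supplies, is a replacement for the renewal argument: an explicit pointwise bound for $\mathbbm{1}_{\bar\Delta_0}(L^k v-\int v\,d\bar\mu_\Delta)$ obtained by combining a distortion-and-averaging inequality of the form $\sup_{\bar\Delta_0}|L^n w|\le C(\Lip(w)\,|\theta^{\psi_n}|_1+|w|_1)$ with the explicit $L^1$ decay and $|\theta^{\psi_n}|_1$ estimates from \cite{korepanov2019explicit} and the Lipschitz propagation bound from \cite{dedecker_prieur_intermittent}. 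This is where the parameters $\delta$ and $\max I$ (controlling effective aperiodicity) genuinely enter the rate constant, not merely in the gcd reduction. Without this replacement lemma your proposal does not establish the continuous dependence claimed in Lemma~\ref{lemma:fcb_explicit}.
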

In \cite[Theorem~2.3]{fleming2022} we proved the estimate \eqref{eq:dyn_holder_fcb} without showing that $ C $ depends continuously on the constants mentioned above. We now briefly outline how the arguments in~\cite{fleming2022} can be modified to prove Lemma~\ref{lemma:fcb_explicit}.

In \cite[Sect.~3.2]{fleming2022} the proof of~\eqref{eq:dyn_holder_fcb} is reduced to the case where $ \gcd\{\phi(y):y\in Y\}=1 $. It is easy to see that the implicit constant in \cite[Sect.~3.2]{fleming2022} depends continuously on $ \beta $ and $ \gcd\{\phi(y):y\in Y\}\le \max\{I\} $. Similarly, it is straightforward to check that most of the constants that appear in~\cite[Sects.~3.3 and 3.4]{fleming2022} can be written explicitly and vary continuously with $ D_0,\theta, \beta $ and $ C_\phi $. The sole exception to this is \cite[Lemma~3.4]{fleming2022}\footnote{There is a typo in \cite[Lemma~3.4]{fleming2022} where $ \norminf{\indicd{\bar\Delta_0}L^n v-\int_{\bar\Delta} v\, d\bar\mu_\Delta} $ should be replaced by $ \norminf{\indicd{\bar\Delta_0}\big(L^n v-\int_{\bar\Delta} v\, d\bar\mu_\Delta\big)} $.}. (Indeed, the proof of that lemma uses operator renewal theory, so it is unclear how the bound obtained depends on the data associated with $ \tf $.)

	Let $ L $ denote the transfer operator corresponding to $ f:\bar\Delta\to \bar\Delta $. Let $ \bar{\Delta}_0=\{(y,\ell)\in\bar\Delta\colon\ell=0\} $ denote the base of $ \bar\Delta $. For any $ d_\theta $-Lipschitz $ v\map{\bar\Delta}{\R} $ write $ \norm{v}_\theta=\norminf{v}+\Lip(v) $. We now complete the proof of Lemma~\ref{lemma:fcb_explicit} by providing the following uniform version of \cite[Lemma~3.4]{fleming2022}:
	\begin{lemma}Suppose that $ \gcd\{\phi(y)\colon y\in Y\}=1 $. 
	Then there exists a constant $ C>0 $ depending continuously on $ D_0,\theta, \delta, \max\{I\}, \beta $ and $ C_\phi$ such that for all $ d_\theta $-Lipschitz $ v\map{\bar\Delta}{\R} $ and for any $ k\ge 1 $,
	\[ \norminf{\indicd{\bar\Delta_0}\biggl(L^k v-\int v\, d\bar\mu_\Delta\biggr)}\le C \norm{v}_\theta k^{-(\beta-1)}. \]
\end{lemma}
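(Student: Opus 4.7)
The plan is to replace the operator renewal theory used in the original proof of \cite[Lemma~3.4]{fleming2022} by an explicit coupling argument in the spirit of Young~\cite{young1999recurrence} and Korepanov~\cite{korepanov2019explicit}, yielding constants that are transparently continuous in the data. First, by splitting $v = v_+ - v_-$ and normalising (at the cost of an overall factor $\|v\|_\theta$), we reduce to the case where $\nu := v\, d\bar\mu_\Delta$ is a probability measure on $\bar\Delta$. The lemma then becomes $\sup_{p \in \bar\Delta_0}|L^k v(p) - 1| \le C k^{-(\beta-1)}$.

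The core step is to build, on a single probability space, a pair of trajectories $X_n \sim \bar f^n_*\nu$ and $Y_n \sim \bar\mu_\Delta$ together with a coupling time $\tau$ such that $X_n = Y_n$ for $n \ge \tau$. The coupling waits for both copies to return simultaneously to the base $\bar Y = \bar\Delta_0$; when they do, a positive fraction of the remaining mass can be paired using the Gibbs--Markov full-branch structure. On each $a \in \alpha$ the distortion of $\bar F|_a$ is bounded by $\exp(D_0/(1-\theta))$, which yields a uniform lower bound $c = c(D_0,\theta) > 0$ on the fraction paired per attempt. The aperiodicity quantifier---$\mu_Y(\phi = k) \ge \delta$ for $k \in I$ with $\gcd I = 1$---combined with a B\'ezout argument ensures that simultaneous base returns occur with probability bounded below by an explicit function of $\delta$ and $\max\{I\}$ within any window of $O(\max\{I\}^{|I|})$ consecutive base visits. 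Combining these pieces with the tail bound $\mu_Y(\phi \ge k) \le C_\phi k^{-\beta}$ and the standard renewal overshoot estimate for sums of i.i.d.\ return times yields $P(\tau > k) \le C_1 k^{-(\beta-1)}$, with $C_1$ continuous in $D_0, \theta, \delta, \max\{I\}, \beta, C_\phi$.

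Finally, one converts the coupling estimate into the sup-norm bound. The coupling gives immediately the $L^1$ bound $\|L^k v - 1\|_{L^1(\bar\mu_\Delta|_{\bar\Delta_0})} \le 2 P(\tau > k)$. To upgrade to $L^\infty$, one uses bounded distortion: expressing $L^k v(p)$ as a weighted sum over preimages of $p$ under $\bar f^k$ and grouping them by their last visit to the base before time $k$, one sees that the sup-norm difference $|L^k v(p) - 1|$ at $p \in \bar\Delta_0$ is dominated (up to a factor depending only on $D_0$ and $\theta$) by the mass of the uncoupled portion at time $k$, which is $P(\tau > k)$. The main obstacle is the construction in the core step: one must produce explicit lower bounds on the mass matched per coupling attempt in terms of $D_0, \theta, \delta$ and $\max\{I\}$, and verify that the renewal constants in the overshoot estimate for $\tau$ depend only on the enumerated parameters and are continuous in them. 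This is a standard bookkeeping exercise, but the essential point---and what allows the argument to bypass operator renewal theory---is that aperiodicity is handled explicitly through $\delta$ and $\max\{I\}$ rather than via a spectral gap on a Banach space, so that every constant can be written as a continuous function of $D_0, \theta, \delta, \max\{I\}, \beta, C_\phi$.
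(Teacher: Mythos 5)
Your route is genuinely different from the paper's, though both bypass the operator renewal theory used in \cite[Lemma~3.4]{fleming2022}. The paper first proves the base sup-norm estimate $\sup_{\bar\Delta_0}|L^n w|\le C(\Lip(w)\pnorm{\theta^{\psi_n}}_1+\pnorm{w}_1)$ by pairing preimages of $x,x'\in\bar\Delta_0$ under $\bar f^n$ and invoking bounded distortion, then applies it with $w=L^{k-[k/2]}v$ and $n=[k/2]$, citing Korepanov's explicit (and themselves coupling-based) bounds for $\pnorm{L^{k-[k/2]}v}_1$ and $\pnorm{\theta^{\psi_{[k/2]}}}_1$. You instead propose to build the coupling from scratch and read the sup-norm bound directly off the coupling-time tail $P(\tau>k)$.

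There are two problems. First, the reduction $v=v_+-v_-$ followed by normalisation does not produce densities controlled by $\norm{v}_\theta$: $\int v_\pm\,d\bar\mu_\Delta$ may be arbitrarily small, so the normalised parts can have uncontrolled Lipschitz constant. The correct reduction is to shift, e.g.\ replace $v$ by $v+2\norminf{v}$, which leaves $L^kv-\int v\,d\bar\mu_\Delta$ unchanged, is nonnegative, and has $\norm{\cdot}_\theta\le 3\norm{v}_\theta$. Second, and this is the genuine gap, the $L^\infty$ upgrade is the heart of the lemma and your sketch does not establish it. The coupling tail $P(\tau>k)$ directly controls only the $L^1$ distance; to get a pointwise bound on $\bar\Delta_0$ one must show that the residual (uncoupled) measure has density $O(P(\tau>k))$ on $\bar\Delta_0$, which requires verifying that the residual stays in a class of uniformly regular densities throughout the coupling iteration and is spread over the whole base by bounded distortion --- precisely the content the paper's preimage-pairing estimate supplies, and not a bookkeeping afterthought. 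Note also that the paper's bound has two separately-decaying terms (an $L^1$ term and a term $\Lip(w)\pnorm{\theta^{\psi_{[k/2]}}}_1$ reflecting the frequency of base returns); it is not apparent that both collapse into a single $P(\tau>k)$ bound in your framing. Until the $L^\infty$ step is carried out in the same detail as the coupling construction itself, the argument is incomplete exactly where this lemma is nontrivial.
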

	\begin{proof}
	Recall that $ L $ is given pointwise by 
	\[(Lv)(x)=\sum_{\bar fz=x}g(z)v(z), \text{ where } g(y,\ell)=\begin{cases}
		\zeta(y),& \ell=\phi(y)-1,\\
		1,& \ell<\phi(y)-1
	\end{cases}.\]
	Hence $ (L^k v)(x)=\sum_{\bar f^k z=x}g_k(z)v(z) $, where $ g_k = \prod_{i=0}^{k-1}g \circ \bar f^i $. 
	
	Throughout this proof, $ C>0 $ denotes a constant depending continuously on $ D_0,\,\theta,\, \delta,\, \max\{I\},\, \beta $ and $ C_\phi$. We first show that for all $ d_\theta $-Lipschitz $ w\map{\bar\Delta}{\R} $ and all $ n\ge 0 $,
	\begin{equation}\label{eq:sup_transfer_op_base_bd}
		\sup_{\bar\Delta_0}|L^n w|\le C(\Lip(w)|\theta^{\psi_n}|_1+|w|_1),
	\end{equation}
	where $ \psi_n(x)= \#\{j=1,\dots,n\colon \bar f^j x\in \bar\Delta_0 \}$ denotes the number of returns to $ \bar\Delta_0 $ by time $ n $.
	
	Let $ x,x'\in \bar\Delta_0 $. Then we can pair preimages $ z,z' $ of $ x,x' $ so that $ \bar f^j z, \bar f^j z' $ lie in the same partition element of $ \bar\Delta $ for $ 0\le j<n $, so $ s(z,z')=\psi_n(z')+s(x,x') $. Write $ (L^n w)(x)-(L^n w)(x')=I_1+I_2 $, where
	\begin{align*}
		I_1=\sum_{\bar f^n z=x}g_n(z)(w(z)-w(z')), \quad I_2=\sum_{\bar f^n z'=x'}w(z')(g_n(z)-g_n(z')).
	\end{align*}
	Note that
	\begin{align*}
		|I_1|\le \Lip(w)\sum_{\bar f^n z=x}g_n(z)\theta^{\psi_n(z')}d_\theta(x,x')=\Lip(w)(L^n \theta^{\psi_n})(x')d_\theta(x,x').
	\end{align*}
	By bounded distortion (see for example~\cite[Proposition~5.2]{korepanov2019explicit}),
	\begin{align*}
		|I_2|\le C\sum_{\bar f^n z'=x'}|w(z')|g_n(z')d_\theta(x,x')=C(L^n |w|)(x')d_\theta(x,x').
	\end{align*}
	It follows that
	$ |(L^n w)(x)|\le |(L^n w)(x')|+ \Lip(w)(L^n \theta^{\psi_n})(x') +C(L^n |w|)(x'). $
	Hence integrating over $ x'\in \bar\Delta_0 $ gives
	\begin{align*}
		|(L^n w)(x)|&\le \bar\mu_\Delta(\bar\Delta_0)^{-1}\int_{\bar\Delta_0}(|L^n w|+\Lip(w)|L^n\theta^{\psi_n}|+CL^n|w|)d\bar\mu_\Delta\\
		&\le \bar\mu_\Delta(\bar\Delta_0)^{-1}\bigl((1+C)\pnorm{w}_1+\Lip(w)\pnorm{\theta^{\psi_n}}_1\bigr).
	\end{align*}
	The proof of \eqref{eq:sup_transfer_op_base_bd} follows by noting that 
	$ \bar\mu_\Delta(\bar\Delta_0)^{-1}=\int_{\bar Y} \phi\, d\bar\mu_Y\le C_\phi \sum_{k\ge 1}k^{-\beta} $.
%
	
	Finally, let $ v\map{\bar\Delta}{\R} $ be $ d_\theta $-Lipschitz and let $ k\ge 1 $. Without loss of generality take $ \int v \, d\bar\mu_\Delta=0 $ and set $ w=L^{k-[k/2]}v $. By \cite[Lemma~2.2]{dedecker_prieur_intermittent}, $ \Lip(w)\le C\norm{v}_\theta$. By \eqref{eq:sup_transfer_op_base_bd}, it follows that
	\begin{align*}
		\sup_{\bar\Delta_0}|L^k v|=\sup_{\bar\Delta_0}|L^{[k/2]} w|&\le C(\Lip(w)|\theta^{\psi_{[k/2]}}|_1+|w|_1)\\
		&\le C(\norm{v}_\theta |\theta^{\psi_{[k/2]}}|_1+|L^{k-[k/2]}v|_1 ).
	\end{align*}
	Now by~\cite[Theorem~2.7]{korepanov2019explicit}, $ \pnorm{L^{k-[k/2]}v}_1\le C(k-[k/2])^{-(\beta-1)}\norm{v}_\theta $. By \cite[Lemma~5.5]{korepanov2019explicit},
	\begin{align*}
		\pnorm{\theta^{\psi_k}}_1 \le \frac{2}{\int \phi d\bar\mu_Y}\sum_{j>k/3}\bar\mu_Y(\phi>j)+k \sum_{\ell= 0}^\infty\theta^{\ell+1}\bar\mu_Y\biggl(\sum_{j=0}^{\ell-1}\phi \circ \bar F^j \ge \tfrac{1}{3}k\biggr).
	\end{align*} 
Thus $ |\theta^{\psi_{[k/2]}}|_1\le C[k/2]^{-(\beta-1)} $. It follows that
	$ \sup_{\bar\Delta_0}|L^k v|\le Ck^{-(\beta-1)}\norm{v}_\theta $,
	as required.
\end{proof}
	\subsection{Uniform families of nonuniformly hyperbolic maps}\label{subsection:uniform_families}
Let $ \tf_n\map{\ms}{\ms},\ n\ge 1 $ be a family of mixing nonuniformly hyperbolic maps as defined in Subsection~\ref{subsection:nuh_map_def} with invariant measures $ \mu_n $.
\begin{defn}\label{def:nuh_family}
	Let $ \beta>1 $. We call $ \tf_n\map{\ms}{\ms} $ a \textit{uniform family} of nonuniformly hyperbolic maps with $ O(k^{-\beta}) $ tails if:
	\begin{enumerate}
		\item The constants $ D_0>0$, $\theta\in(0,1) $ can be chosen independent of $ n\ge 1 $.
		\item There exists $ C_\phi>0 $ such that the return time functions $ \phi_n\map{Y_n}{\Z^+} $ satisfy $ \mu_{Y_n}(\phi_n\ge k)\le C_\phi k^{-\beta} $ for all $ n,k\ge 1 $.
		\item There exist $ \delta>0 $ and $ K>0 $ such that for all $ n $ there exists $ I_n\subset [1,K] $ with $ \mu_{Y_n}(\phi_n=k)\ge \delta $ for $ k\in I_n $ and $ \gcd\{I_n\}=\gcd\{\phi_n(y)\colon y \in Y_n\} $.
	\end{enumerate}
\end{defn}
We are now ready to state the main result of this section:
\begin{theorem}\label{thm:uniform_fcb}
	Let $ \tf_n $ be a uniform family of nonuniformly hyperbolic maps with  $ O(k^{-\beta}) $ tails. Then the family $ \tf_n $ satisfies the \fcb{} uniformly with rate $ k^{-(\beta-1)} $.
\end{theorem}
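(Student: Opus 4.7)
Fix $\eta \in (0,1]$, the H\"older exponent implicit in the FCB. The plan is to reduce to Lemma~\ref{lemma:fcb_explicit} via a change of metric that converts $\eta$-H\"older functions into Lipschitz functions, and then to extract a uniform constant from the explicit continuous-dependence statement in Lemma~\ref{lemma:fcb_explicit}. This bookkeeping strategy avoids revisiting the operator-theoretic proof of the FCB for a single map and instead leverages Lemma~\ref{lemma:fcb_explicit} as a black box.

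The key preliminary observation is that if $\tf_n\map{\ms}{\ms}$ is a nonuniformly hyperbolic map with data $(d, \theta, D_0, \delta, K, \beta, C_\phi)$, then $\tf_n$ is also a nonuniformly hyperbolic map after replacing $d$ by $d^\eta$ and $\theta$ by $\theta^\eta$, with $D_0$ replaced by $D_0^\eta$ and all other data unchanged. Indeed, the Gibbs--Markov contraction~\eqref{eq:two_sided_GM_contraction} and the intermediate iterate bound~\eqref{eq:yt_intermediate_iterate_bd} both have the form $d(\cdot,\cdot) \le D_0(\cdot + \theta^{\cdot})$, and the subadditivity $(a+b)^\eta \le a^\eta + b^\eta$ (valid for $\eta \in (0,1]$) lets one raise them to the $\eta$-th power to recover the corresponding inequalities for $d^\eta$ and $\theta^\eta$. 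The Gibbs--Markov property on $\bar Y_n$, the tail bound on $\phi_n$, and the gcd/lower-mass condition in Definition~\ref{def:nuh_family}(iii) depend only on the separation time and the measure $\mu_{Y_n}$, so they are unaffected by the metric change. Hence the uniform family assumption continues to hold with respect to the modified data $(D_0^\eta, \theta^\eta, \delta, K, \beta, C_\phi)$.

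Now let $G\map{\ms^q}{\R}$ be separately $\eta$-H\"older with respect to $d$. Then for each $0\le i<q$, the slice $G(x_0,\dots,x_{i-1},\cdot,x_{i+1},\dots,x_{q-1})$ is Lipschitz with respect to $d^\eta$ with Lipschitz constant at most $\sdsemi{G}{i}$. Applying Proposition~\ref{prop:holder_implies_dyn_holder} (the proof of which carries over verbatim with $(d^\eta,\theta^\eta,D_0^\eta)$ in place of $(d,\theta,D_0)$) to each slice yields $[G]_{\mathcal{H},i} \le D_0^\eta \sdsemi{G}{i}$, where the dynamical H\"older seminorm is computed with the modified data. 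Feeding this into Lemma~\ref{lemma:fcb_explicit} applied to $\tf_n$ in the modified setting gives the desired bound~\eqref{eq:fcb_bd} with a constant $C_n$ that depends continuously on $(D_0^\eta, \theta^\eta, \delta, \max I_n, \beta, C_\phi)$. By the uniform family hypothesis this tuple lies in a compact subset of the parameter space, independent of $n$, so $C_n$ is uniformly bounded. I do not foresee a substantial obstacle: the argument is essentially a reduction to Lemma~\ref{lemma:fcb_explicit} combined with the Lipschitz-to-dynamically-H\"older step; the main technical care lies in verifying that the passage to $(d^\eta, \theta^\eta)$ preserves every NUH ingredient and that the uniformity constants in Definition~\ref{def:nuh_family} transport to the modified data.
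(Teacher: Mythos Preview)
Your proposal is correct and follows essentially the same approach as the paper: reduce to the Lipschitz case by replacing the metric $d$ with $d^\eta$, observe that the uniform family structure is preserved (with $D_0,\theta$ replaced by $D_0^\eta,\theta^\eta$), apply Proposition~\ref{prop:holder_implies_dyn_holder} to pass from separately Lipschitz to separately dynamically H\"older, and then invoke Lemma~\ref{lemma:fcb_explicit} together with the continuous dependence of its constant on $(D_0,\theta,\delta,\max I_n,\beta,C_\phi)$ to obtain a bound independent of $n$. Your write-up is somewhat more explicit than the paper's (you spell out the subadditivity $(a+b)^\eta\le a^\eta+b^\eta$ and the compactness of the parameter set), but the structure is identical.
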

\begin{proof}
	Without loss of generality take $ \eta=1 $. Indeed, let $ d_\eta(x,y)=d(x,y)^\eta $. If $ \tf_n $ is a uniform family on $ (\ms,d) $, then $ \tf_n $ is a uniform family on $ (\ms,d_\eta) $ with slightly different constants, and separately $ \eta $-\holder{} functions with respect to $ d $ are separately Lipschitz with respect to $ d_\eta $.
	
By Lemma~\ref{lemma:fcb_explicit}, there exists a constant $ C>0 $ such that $ T=T_n $ satisfies \eqref{eq:dyn_holder_fcb} for all $ n\ge 1 $. Let $ G:\ms^q\to \R$, $ q\ge 1 $ be separately Lipschitz. Then by Proposition~\ref{prop:holder_implies_dyn_holder}, we have $ [G]_{\mathcal{H},i}\le D_0 [G]_{1,i} $ for all $ 0\le i<q $. Hence, for all $ n\ge 1 $, $ T_n $ satisfies the \fcb{} with constant $ C\max\{D_0,1\} $.
\end{proof}
\section{Examples}\label{section:examples}
In this section we consider examples of families of dynamical systems for which we can verify the hypotheses of Theorem~\ref{thm:homog_from_fcb_family}. 
\subsection{Intermittent Baker's maps}\label{subsection:intermittent_baker}
Let $ I=[0,1],$ $M=I^2 $. Fix a family of intermittent Baker's maps $ T_n:M\to M $, $ n\in \N\cup\{\infty\} $, as in~\eqref{eq:intermittent_baker} with parameters $ \alpha_n\in(0,\frac{1}{2}) $ such that $ \lim_{n\to\infty}\alpha_n = \alpha_\infty \in (0,\frac{1}{2})$. Recall that $ T_n $ is a skew product map of the form 
\[ T_n(x,z)=(\bar T_n(x),h_n(x,z)),\quad h_n(x,z)=\begin{cases}
	g_{n,0}(z), & x<\frac{1}{2},\\
	g_{n,1}(z), & x\ge\frac{1}{2},
\end{cases} \]
where $ \bar T_n:I\to I $ is the Liverani-Saussol-Vaienti map with parameter $ \alpha_n $ and $ g_{n,0}$, $g_{n,1} $ are the inverse branches of $\bar T_n $. The projection $ \pi:M\to I $, $ \pi(x,z)=x $ defines a semiconjugacy between $ T_n $ and $ \bar T_n $. By~\cite{lsvmaps}, there is a unique $\bar T_n $-invariant ergodic probability measure $ \bar\mu_n $ which is absolutely continuous with respect to Lebesgue. Let $ \mu_n $ be the unique $ T_n $-invariant ergodic probability measure such that $ \pi_*\mu_n =\bar\mu_n $ (we construct this measure in Proposition~\ref{prop:lifted_meas_skew}).

\begin{lemma}
	$ T_n $, $ n\in \N\cup\{\infty\} $, is a uniform family of nonuniformly hyperbolic maps with $ O(k^{-1/\alpha}) $ tails, where $ \alpha=\sup_n \alpha_n  $.
\end{lemma}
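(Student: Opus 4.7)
My approach is to construct a parameter-uniform two-sided Young tower for the family by exploiting the skew-product structure of $T_n$. For each $n \in \mathbb{N}\cup\{\infty\}$, I would take the inducing domain $Y_n = [\tfrac{1}{2},1] \times [0,1]$, with $\phi_n$ the first return time of $T_n$ to $Y_n$; since the dynamics is skew-product, $\phi_n$ depends only on the $x$-coordinate and coincides with the first-return time of $\bar T_n$ to $[\tfrac{1}{2},1]$. The first return $F_n = T_n^{\phi_n}\colon Y_n \to Y_n$ projects via $\bar\pi(x,z) = x$ to the classical one-sided Gibbs-Markov inducing scheme $\bar F_n = \bar T_n^{\phi_n}$ for the LSV map. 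The nonuniformly hyperbolic structure of $T_n$ for each fixed $n$ is already established in~\cite{melbourne2016note,fleming2022}, so the content of the lemma is the uniformity of the constants $D_0, \theta, C_\phi, \delta, K$ from Definition~\ref{def:nuh_family}.

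I would take $\theta = \tfrac{1}{2}$ throughout. Standard LSV estimates give a Gibbs-Markov distortion bound for $\bar F_n$ with constants depending continuously on $\alpha_n$; since $\{\alpha_n\} \cup \{\alpha_\infty\}$ is a compact subset of $(0,\tfrac{1}{2})$, these constants are uniformly bounded, providing a uniform $D_0$ and $\theta = \tfrac{1}{2}$ for the one-sided Gibbs-Markov structure. For the two-sided contraction~\eqref{eq:two_sided_GM_contraction}, the key observation is that each application of $F_n$ begins with the branch $g_{n,1}(z) = (z+1)/2$, contracting the $z$-coordinate by exactly $\tfrac{1}{2}$ independently of $n$, while subsequent applications of $g_{n,0}$ can only contract further; this gives the $\theta^k$ factor, and $\theta^{s(y,y')-k}$ is inherited from $\bar F_n$. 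For the intermediate iterate bound~\eqref{eq:yt_intermediate_iterate_bd}, boundedness of $M$ combined with $|z_\ell - z_\ell'| \leq |z - z'|$ (since $h_n(x,\cdot)$ has derivative $\leq 1$) handles the case $s(y,y') < \infty$ directly, while $s(y,y') = \infty$ forces $x = x'$ and reduces to the $z$-fiber contraction.

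For condition~(ii), the classical explicit LSV tail bound $\bar\mu_n(\phi_n \geq k) \leq C(\alpha_n) k^{-1/\alpha_n}$ has $C(\alpha_n)$ continuous in $\alpha_n$, hence uniformly bounded; combined with $\alpha_n \leq \alpha$ (so that $k^{-1/\alpha_n} \leq k^{-1/\alpha}$ for $k \geq 1$), this yields $\mu_{Y_n}(\phi_n \geq k) \leq C_\phi k^{-1/\alpha}$ uniformly in $n$. For condition~(iii), note that $\phi_n = 1$ precisely on $[\tfrac{3}{4},1]$, and that the invariant density of $\bar T_n$ is uniformly bounded below on $[\tfrac{1}{2},1]$, so that $\bar\mu_n([\tfrac{3}{4},1]) \geq \delta > 0$ uniformly; take $I_n = \{1\}$ with $K = 1$. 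The main obstacle is the bookkeeping required to verify that all explicit constants appearing in the standard LSV Gibbs-Markov construction depend continuously on $\alpha_n$; this is routine but is the only technical step that is not an immediate consequence of the skew-product structure.
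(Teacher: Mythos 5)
Your approach follows the paper's proof almost step for step — same inducing domain $Y_n = [\tfrac12,1]\times I$, same first-return scheme projecting to the one-sided LSV tower, $\theta=\tfrac12$, same strategy for conditions (ii) and (iii) — but there is a genuine gap in your verification of the intermediate-iterate bound~\eqref{eq:yt_intermediate_iterate_bd}.

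You claim that when $s(y,y')<\infty$ the bound follows ``directly'' from boundedness of $M$ together with $|z_\ell-z'_\ell|\le|z-z'|$. This does not work: boundedness of $M$ only gives the trivial estimate $|x_\ell-x'_\ell|\le \operatorname{diam}(M)$, while the right-hand side $D_0\bigl(d(y,y')+\theta^{s(y,y')}\bigr)$ can be arbitrarily small when $s(y,y')$ is large and $d(y,y')$ is small. The $x$-coordinate must in fact be controlled by $\theta^{s(y,y')}$: since $\bar T'_n\ge 1$ everywhere and $\bar T'_n=2$ on $\bar Y$, one has $|\bar T_n^\ell x-\bar T_n^\ell x'|\le|\bar F_n x-\bar F_n x'|$ for $0\le\ell<\phi$, and the expansion of $\bar F_n$ by at least $2$ per return gives $|\bar F_n x-\bar F_n x'|\le(\tfrac12)^{s(x,x')-1}\operatorname{diam}(\bar Y)$. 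The paper packages exactly this together with the fibre contraction into the single estimate~\eqref{eq:intermittent_baker_yt_bound}, from which both~\eqref{eq:two_sided_GM_contraction} and~\eqref{eq:yt_intermediate_iterate_bd} are read off. You sketch the correct expansion idea when discussing~\eqref{eq:two_sided_GM_contraction} (``$\theta^{s(y,y')-k}$ is inherited from $\bar F_n$''), so the fix is to invoke that same argument for the intermediate iterates rather than appealing to boundedness of $M$.

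The remaining ingredients — uniform Gibbs--Markov constants, tail bound $\mu_{Y_n}(\phi_n\ge k)\ll k^{-1/\alpha_n}\le k^{-1/\alpha}$, and condition (iii) via $\{\phi_n=1\}=[\tfrac34,1]$ with $I_n=\{1\}$ — agree with the paper's proof, which supports them by citing~\cite{young1999recurrence}, \cite[Example 5.1]{kkmaveraging} and \cite[Lemma 2.4]{lsvmaps} rather than a compactness-of-parameters argument, but that choice is inessential.
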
 
\begin{proof}
For each $ n $, we take $ \bar Y=[1/2,1]$ and let $ \bar{\phi}_n:\bar Y\to \Z^+ $ be the first return time to $\bar Y $, i.e.\ $\bar \phi_n(y)=\inf\{k\ge 1:\bar T_n^k(y)\in \bar Y \} $. Then by~\cite[Section 6]{young1999recurrence}, the first return map $ \bar F_n=\bar T_n^{\bar \phi_n}:\bar Y\to \bar Y $ is a Gibbs-Markov map and there exists a constant $ C>0 $ such that
$\bar\mu_n(\bar\phi_n>k)\le Ck^{-1/\alpha_n}$ for all $ k\ge 1$.
 Moreover, by~\cite[Example 5.1]{kkmaveraging} both the constant $ C $ and the constants that appear in the definition of Gibbs-Markov map can be chosen independently of $ n $. It follows that condition (ii) in Definition~\ref{def:nuh_family} is satisfied.
 
 Note that $ \{\bar \phi_n = 1\}=[3/4,1] $. By~\cite[Lemma 2.4]{lsvmaps}, $ \inf_n \inf_I d\bar \mu_n/d\leb>0 $. Hence $ \inf_n \bar\mu_n|_{\bar Y}(\bar \phi_n = 1)>0 $, and so condition (iii) in Definition~\ref{def:nuh_family} is satisfied.

Let $ Y=\bar Y\times I $ and let $ \phi_n:Y\to \Z^+ $ be the first return time to $ Y $.  Then $ \phi_n=\bar\phi_n\circ \pi|_Y $ and $ \pi|_Y $ defines a semiconjugacy between $ F_n = T_n^{\phi_n}:Y\to Y $ and $ \bar F_n $.

We now complete the proof of condition (i) in Definition~\ref{def:nuh_family} by verifying that~\eqref{eq:two_sided_GM_contraction} and~\eqref{eq:yt_intermediate_iterate_bd} hold with constants $ D_0, \theta $ that are uniform in $ n $. Denote $ \psi_{n,k}(x)=\#\{j=0,\dots,k-1:\bar T_n^j x\in \bar Y\} $. We claim that 
\begin{equation}\label{eq:intermittent_baker_yt_bound}
	d(T_n^k(x_1,z_1),T_n^k(x_2,z_2))\le 2(\tfrac{1}{2})^{s(x_1,x_2)-\psi_{n,k}(x_1)}+(\tfrac{1}{2})^{\psi_{n,k}(x_1)}d(z_1,z_2).
\end{equation}
for all $ k\ge 1 $, $ n\in \N\cup\{\infty\} $, $ (x_1,z_1), (x_2,z_2)\in Y $. It is straightforward to check that both~\eqref{eq:two_sided_GM_contraction} and \eqref{eq:yt_intermediate_iterate_bd} follow from this claim with $ D_0=2 $, $ \theta=\frac{1}{2} $.

Let $ x_1, z_1, z_2\in I $. Then for $ i=1,2 $,
\begin{equation}\label{eq:skew_product_iterates}
	T^k_n(x_1,z_i)=(\bar T_n^k x_1, g_{n,a(k-1)}\circ \dots \circ g_{n,a(0)}(z_i)),
\end{equation}
where $ a(j)=\indic{\bar T_n^j x_1\in \bar Y} $. Since $ \norminf{g'_{n,0}}\ge 1 $ and $ \norminf{g'_{n,1}}=\frac{1}{2} $, by the mean value theorem it follows that 
\begin{align}\label{eq:intermittent_baker_stable_bd}
	d(T_n^k(x_1,z_1),T_n^k(x_1,z_2))\le \prod_{j=0}^{k-1}\big|g'_{n,a(j)}\big|_\infty d(z_1,z_2)\le (\tfrac{1}{2})^{\psi_{n,k}(x_1)}d(z_1,z_2).
\end{align}
Let $ x_1, x_2 \in \bar{Y} $. Without loss of generality assume that $ s(x_1,x_2)>\psi_{n,k}(x_1) $, for otherwise~\eqref{eq:intermittent_baker_yt_bound} is satisfied trivially. Since $ \phi_n $ is the first return time to $ \bar Y $, it follows that for all $ 0\le j<k $ we have $ \bar T_n^j x_1\in \bar Y $ if and only if $ \bar T_n^j x_2 \in \bar Y $. Hence by~\eqref{eq:skew_product_iterates},
$ d(T_n^k(x_1,z_1),T_n^k(x_2,z_1))=d(\bar T_n^k x_1,\bar T_n^k x_2). $ Note that $ \bar F_n^{\psi_{n,k}+1}=\bar T_n^r $, where $ r = \sum_{\ell=0}^{\psi_{n,k}}\bar\phi_n \circ \bar F_n^\ell $. Since $ r(x_1)=r(x_2)>k $ and $ \bar T'_n\ge 1 $, it follows that 
\[ d(\bar T_n^k x_1,\bar T_n^k x_2)\le d(\bar T_n^{r(x_1)} x_1,\bar T_n^{r(x_1)} x_2)=d(\bar F_n^{\psi_{n,k}+1} x_1,\bar F_n^{\psi_{n,k}+1} x_2). \]
Now $\bar T_n'\ge 1 $ and $\bar T_n'=2 $ on $ \bar Y $ so $ d(\bar F_n y, \bar F_n y')\ge 2d(y,y') $ whenever $ y,y'\in \bar Y $ belong to the same partition element. By~\cite[Lemma 3.2]{alves_book}, it follows that $ d(\bar F_n^{\psi_{n,k}+1} x_1,\bar F_n^{\psi_{n,k}+1} x_2)\le (\frac{1}{2})^{s(x_1,x_2)-(\psi_{n,k}(x_1)+1)} $. The claim~\eqref{eq:intermittent_baker_yt_bound} follows by combining this with~\eqref{eq:intermittent_baker_stable_bd}. 
\end{proof}
By Theorem~\ref{thm:uniform_fcb}, it follows that the family $ T_n $ satisfies the Functional Correlation Bound uniformly with rate $ k^{-(1/\alpha - 1)} $. In the remainder of this subsection we verify that condition~\eqref{eq:conv_correlation_fns} is satisfied by verifying the conditions (A1), (A2) from Remark~\ref{rk:sufficient_conditions_conv}. It follows that our main result, Theorem~\ref{thm:homog_from_fcb_family}, applies to the family $ T_n $.

By~\cite{korepanovlinearresponse,baladitodd}, $ \bar\mu_n $ is \textit{strongly statistically stable}, i.e.\ $ d\bar \mu_n/d\leb\to d\bar\mu_\infty/d\leb $ in $ L^1(\leb) $.  The following proposition immediately implies that (A2) holds and will also be useful in the proof that $ \mu_n$ is statistically stable, as required by (A1).
\begin{prop}\label{prop:T_n_conv_prob}
	For all $ a>0 $, for all $ j\in \N $,
	\begin{equation}\label{eq:mu_bar_n_conv_in_prob}
		\lim_{n\to\infty}\bar\mu_n\Big(x:\sup_{z\in I}d(T_n^j(x,z),T_\infty^j(x,z))>a \Big)=0.
	\end{equation}
\end{prop}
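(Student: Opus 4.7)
The plan is to exploit the skew product structure $T_n^j(x,z) = (\bar T_n^j x, h_{n,j}(x,z))$, where $h_{n,j}(x,z) = g_{n,a_{j-1}(x)}\circ\cdots\circ g_{n,a_0(x)}(z)$ and $a_i(x) = \indic{\bar T_n^i x \ge 1/2}$ (cf.~\eqref{eq:skew_product_iterates}). By the triangle inequality, \eqref{eq:mu_bar_n_conv_in_prob} reduces to controlling the $x$-coordinate distance $d(\bar T_n^j x, \bar T_\infty^j x)$ and the $z$-coordinate distance $\sup_z |h_{n,j}(x,z) - h_{\infty,j}(x,z)|$ separately in $\bar\mu_n$-probability.

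Two preliminary uniform convergence facts drive everything. First, $\norminf{\bar T_n - \bar T_\infty}\to 0$ on $[0,1]$: the two maps agree on $[1/2,1]$, and on $[0,1/2]$ we have $|\bar T_n(x)-\bar T_\infty(x)| \le \tfrac12|(2x)^{\alpha_n}-(2x)^{\alpha_\infty}|$, which vanishes uniformly since $(\alpha,u)\mapsto u^\alpha$ is uniformly continuous on the compact set $[\inf_n\alpha_n,\sup_n\alpha_n]\times [0,1]\subset (0,\infty)\times [0,1]$. Second, $\norminf{g_{n,a}-g_{\infty,a}}\to 0$ for $a\in\{0,1\}$: this is trivial for $a=1$ since $g_{n,1}(z)=(z+1)/2$, while for $a=0$ the mean value theorem applied to $\bar T_n(g_{n,0}(z))=z=\bar T_\infty(g_{\infty,0}(z))$, combined with $\bar T_n' \ge 1$ on $[0,1/2]$, yields $|g_{n,0}(z)-g_{\infty,0}(z)|\le \norminf{\bar T_n - \bar T_\infty}$. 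Each $g_{n,a}$ is $1$-Lipschitz, and each branch of $\bar T_n$ is $3$-Lipschitz.

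I would next induct on $j$ to show $\bar\mu_n\{d(\bar T_n^j x, \bar T_\infty^j x)>\epsilon\}\to 0$ for every $\epsilon>0$. The inductive step bounds
\[ d(\bar T_n^j x, \bar T_\infty^j x) \le |\bar T_n(\bar T_n^{j-1}x) - \bar T_n(\bar T_\infty^{j-1}x)| + \norminf{\bar T_n-\bar T_\infty}, \]
where the second term is uniformly negligible. The first term is $\le 3\, d(\bar T_n^{j-1}x, \bar T_\infty^{j-1}x)$ provided $\bar T_n^{j-1}x$ and $\bar T_\infty^{j-1}x$ lie on the same side of $1/2$. The bad event requires $\bar T_\infty^{j-1}x$ to be within $d(\bar T_n^{j-1}x,\bar T_\infty^{j-1}x)$ of $1/2$; by the induction hypothesis together with strong statistical stability $\bar\mu_n\to\bar\mu_\infty$ in total variation (via Scheff\'e's theorem from $L^1$ convergence of densities), its $\bar\mu_n$-measure is bounded by $\bar\mu_\infty\{|\bar T_\infty^{j-1}x - 1/2|<\delta\}+o(1)$ for each fixed $\delta>0$. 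Letting $\delta\to 0$ concludes since $\{\bar T_\infty^{j-1}x = 1/2\}$ is a finite, hence $\bar\mu_\infty$-null, set by piecewise monotonicity.

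For the $z$-coordinate I would introduce the good event
\[ G_{n,j,\delta} = \bigcap_{i=0}^{j-1}\{x: d(\bar T_n^i x, \bar T_\infty^i x)<\delta,\ |\bar T_\infty^i x - 1/2|\ge \delta\}. \]
On $G_{n,j,\delta}$ the symbolic sequences agree, $a_i^{(n)}(x)=a_i^{(\infty)}(x)$ for $i<j$, so $h_{n,j}$ and $h_{\infty,j}$ are compositions of the same branch sequence from $\{g_{n,\cdot}\}$ vs $\{g_{\infty,\cdot}\}$; a telescoping estimate using $1$-Lipschitzness of compositions gives $\sup_z|h_{n,j}(x,z)-h_{\infty,j}(x,z)|\le j\max_a\norminf{g_{n,a}-g_{\infty,a}}\to 0$ uniformly. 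A union bound combined with the $x$-coordinate step and statistical stability yields $\bar\mu_n(G_{n,j,\delta}^c)\to 0$ after $n\to\infty$ and then $\delta\to 0$. The main obstacle is managing the discontinuity at $x=1/2$, where a small perturbation can flip the symbolic coding; it is handled by routing statistical stability through the fixed (not $n$-dependent) sets $\{|\bar T_\infty^i x - 1/2|<\delta\}$, whose $\bar\mu_\infty$-measure shrinks with $\delta$ by absolute continuity of $\bar\mu_\infty$.
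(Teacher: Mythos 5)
Your proof is correct and follows essentially the same strategy as the paper: avoid the preimages of the discontinuity point $\tfrac12$ up to time $j$, establish uniform convergence of $T_n^j$ away from that set, and control the measure near the bad set via absolute continuity of $\bar\mu_\infty$ together with strong statistical stability. The paper compresses all of your explicit Lipschitz and telescoping estimates into a single compactness claim — choosing a compact $K\subset I$ with $\tfrac12\notin\bar T_\infty^i(K)$ for $0\le i\le j$ and invoking uniform convergence of $T_n^j\to T_\infty^j$ on $K\times I$ — and then transfers from $\bar\mu_\infty$ to $\bar\mu_n$ in one step at the end, whereas you work with $\bar\mu_n$ directly throughout; both are valid and amount to the same argument.
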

\begin{proof}
	Let $ \eps>0 $. Choose $ K\subset I $ compact such that $ \frac{1}{2}\notin \bar T_\infty^i(K) $ for $ 0\le i\le j $ and $ \bar \mu_\infty(K)\ge 1-\eps $. Then $ T_n^j \to T_\infty^j $ uniformly on $ K\times I $ so for all $ a>0 $,
	\[ \limsup_{n\to\infty}\bar\mu_\infty\Big(x:\sup_{z\in I}d(T_n^j(x,z),T_\infty^j(x,z))>a \Big)\le \bar\mu_\infty(I\setminus K)<\eps. \]
	It follows that~\eqref{eq:mu_bar_n_conv_in_prob} holds with $ \bar\mu_\infty $ in place of $ \bar\mu_n $. The inequality~\eqref{eq:mu_bar_n_conv_in_prob} follows by strong statistical stability.
\end{proof}
Note that $ T_\infty $ is continuous on $ I^2\setminus (\{\frac{1}{2}\}\times I) $ so $ T_\infty $ is continuous $ \mu_\infty $-a.e. In the remainder of this subsection, we complete the verification of condition (A2) by showing that $ \mu_n $ is statistically stable. We closely follow the strategy that Alves \& Soufi~\cite{alvessoufi} used to prove statistical stability for the Poincar\'e maps of geometric Lorenz attractors.

First let us recall the standard procedure for constructing invariant measures for skew products with contracting fibres. Given a bounded, measurable function $ \phi:M\to \R $, define $ \phi^+:I\to \R $ by 
$ \phi^+(x)=\sup_{z\in I}\phi(x,z). $
\begin{prop}\label{prop:lifted_meas_skew}
	Let $ n\in \N\cup \{\infty\} $. There exists a unique probability measure $ \mu_n $ such that for any continuous function $ \phi:M\to \R $,
	\begin{equation}\label{eq:lifted_meas_def}
		\int_M \phi\, d\mu_n = \lim_{m\to \infty}\int_{I} (\phi\circ T_n^m)^{+}d\bar\mu_n.
	\end{equation}
	Moreover, the convergence is uniform in $ n $. Besides, $ \mu_n $ is the unique $ T_n $-invariant ergodic probability measure such that $ \pi_* \mu_n = \bar\mu_n $.
\end{prop}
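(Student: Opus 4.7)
The plan is to use the standard construction for skew products with contracting fibers: since $T_n$ contracts fibers over $\bar T_n$, a $T_n$-invariant probability measure projecting to $\bar\mu_n$ should be uniquely determined by pushing forward along the fiber contraction. I will first establish a fiber oscillation estimate, use it in a Cauchy argument to define $L_n(\phi):=\lim_m \int_I (\phi\circ T_n^m)^+\, d\bar\mu_n$, apply Riesz representation, and then verify invariance, projection, uniqueness, and ergodicity.

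For $\phi \in C(M)$ with modulus of continuity $\omega_\phi$, define the fiber oscillation
\[ \text{osc}_m^{(n)}(x) := (\phi\circ T_n^m)^+(x) - (\phi\circ T_n^m)^-(x), \]
where $(\phi\circ T_n^m)^-(x) = \inf_{z\in I}\phi(T_n^m(x,z))$. The fiber contraction estimate~\eqref{eq:intermittent_baker_stable_bd} gives $\text{osc}_m^{(n)}(x) \le \omega_\phi((1/2)^{\psi_{n,m}(x)})$. Combining the uniform density lower bound $\inf_n \bar\mu_n(\bar Y) \ge c_0 > 0$ from~\cite{lsvmaps} with a uniform variance bound $\Var_{\bar\mu_n}(\psi_{n,m}) = O(m)$ (obtained by approximating $\indicd{\bar Y}$ by \holder{} functions and applying the uniform FCB for the family $\bar T_n$) together with Chebyshev, one has $\bar\mu_n(\psi_{n,m} < N) = O(1/m)$ uniformly in $n$ for each fixed $N$. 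Hence $\int_I \text{osc}_m^{(n)}\, d\bar\mu_n \to 0$ uniformly in $n$.

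The nesting $T_n^{m+k}(\{x\}\times I) \subset T_n^k(\{\bar T_n^m x\} \times I)$ yields $0 \le (\phi\circ T_n^k)^+(\bar T_n^m x) - (\phi\circ T_n^{m+k})^+(x) \le \text{osc}_k^{(n)}(\bar T_n^m x)$. Integrating against $\bar\mu_n$ and using $\bar T_n$-invariance,
\[ 0 \le \int_I (\phi\circ T_n^k)^+\, d\bar\mu_n - \int_I (\phi\circ T_n^{m+k})^+\, d\bar\mu_n \le \int_I \text{osc}_k^{(n)}\, d\bar\mu_n, \]
making $a_m(\phi):=\int_I (\phi\circ T_n^m)^+\, d\bar\mu_n$ Cauchy uniformly in $n$, so $L_n(\phi):=\lim_m a_m(\phi)$ exists with uniform convergence. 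The functional $L_n$ is linear: from $(\phi+\psi)^+ \le \phi^+ + \psi^+$ and the reverse inequality for $(\cdot)^-$, together with $\lim_m \int_I (\phi\circ T_n^m)^-\, d\bar\mu_n = L_n(\phi)$ (since the oscillation vanishes in the limit), subadditivity collapses to equality; homogeneity with respect to negative scalars uses $(c\phi)^+ = c\phi^-$. Since $L_n(1)=1$ and $|L_n(\phi)|\le \norminf{\phi}$, Riesz representation produces a Borel probability measure $\mu_n$ on $M$ with $L_n(\phi) = \int_M \phi\, d\mu_n$.

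Finally, $T_n$-invariance follows from $\int \phi\circ T_n\, d\mu_n = \lim_m a_{m+1}(\phi) = L_n(\phi)$; the identity $\pi_*\mu_n = \bar\mu_n$ follows by applying $L_n$ to $\phi = \psi\circ\pi$, whence $(\phi\circ T_n^m)^+(x) = \psi(\bar T_n^m x)$. For uniqueness, any $T_n$-invariant $\mu'_n$ with $\pi_*\mu'_n = \bar\mu_n$ satisfies $\int \phi\, d\mu'_n = \int \phi\circ T_n^m\, d\mu'_n$; since $\text{osc}_m^{(n)}$ depends only on $x$, squeezing between $(\phi\circ T_n^m)^\pm$ and sending $m\to\infty$ gives $\int \phi\, d\mu'_n = L_n(\phi)$. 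Ergodicity then follows: a $T_n$-invariant set $A$ with $0<\mu_n(A)<1$ would give $\mu_n|_A/\mu_n(A)$ a distinct $T_n$-invariant probability measure whose projection is $\bar T_n$-invariant and absolutely continuous with respect to $\bar\mu_n$, hence equal to $\bar\mu_n$ by ergodicity, contradicting uniqueness. The main obstacle will be establishing uniform-in-$n$ convergence $\int \text{osc}_m^{(n)}\, d\bar\mu_n \to 0$, which rests on quantitative uniform control of the return frequency $\psi_{n,m}/m$ to $\bar Y$—this is where the uniform LSV structure (uniform density bounds and uniform decay of correlations) is essential.
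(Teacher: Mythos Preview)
Your argument is correct and follows the standard fiber-contraction strategy, but the paper takes a noticeably simpler route at the key step. You bound the fiber oscillation by $\omega_\phi\bigl((1/2)^{\psi_{n,m}(x)}\bigr)$ via~\eqref{eq:intermittent_baker_stable_bd}; this depends on the visit count to $\bar Y$ and is not uniform in $x$, so you then recover uniformity in $n$ only after integration, by a probabilistic concentration argument for $\psi_{n,m}$. The paper instead observes that $T_n^m(\{x\}\times I)=\{\bar T_n^m x\}\times H(I)$ for some inverse branch $H$ of $\bar T_n^m$, and invokes the estimate $\mathrm{diam}\,H(I)\le Cm^{-1/\sup_n\alpha_n}$ from~\cite{leppanen2017functional}, valid for \emph{every} inverse branch. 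This yields fiber contraction uniform in both $x$ and $n$, so the oscillation is bounded pointwise by $\omega_\phi(Cm^{-1/\alpha})$ and no appeal to return statistics, variance bounds, or decay of correlations for $\bar T_n$ is needed. The remaining steps (Cauchy argument, Riesz, invariance, uniqueness, ergodicity) the paper outsources to~\cite{alvessoufi,singularhyperbolicchaotic,melbourne_butterley}, whereas you spell them out; your treatment of those parts is fine and arguably more self-contained.

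One minor imprecision in your sketch: as written, ``$\Var_{\bar\mu_n}(\psi_{n,m})=O(m)$ by approximating $\indicd{\bar Y}$ by \holder{} functions and applying the FCB'' does not quite go through, since the $L^\infty$ approximation error does not vanish and contaminates the correlation sum. The clean fix is to take a fixed \holder{} minorant $\chi\le\indicd{\bar Y}$ with $\inf_n\int\chi\,d\bar\mu_n>0$ (possible by the uniform lower density bound); then $\psi_{n,m}\ge\sum_{j<m}\chi\circ\bar T_n^j$, and the latter genuinely has $O(m)$ variance and the desired Chebyshev concentration, which suffices for your purposes.
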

\begin{proof}
We first show that the maps $ T_n $ uniformly contract fibres in the sense that 
\begin{equation}\label{eq:unif_contraction}
	\mathrm{diam}\, T_n^m\pi^{-1}(x) \to 0 \text { as }m\to \infty, \text{ uniformly in $ x $ and $ n $.}
\end{equation}
Fix $ x $ and $ n $. By~\eqref{eq:skew_product_iterates} and the fact that $ g_{n,0} $ and $ g_{n,1} $ are inverse branches of $ \bar T_n $,
\[ T_n^m\pi^{-1}(x)=\{\bar T_\infty^m(x)\}\times H(I) \]
where $ H:I\to I $ is an inverse branch of $ \bar T^m_n $. By~\cite[equation (5)]{leppanen2017functional}, there exists $ C>0 $ such that for all $ m,n\ge 1 $, for any inverse branch $ H $ of $ \bar T_n^m $ we have
$ \mathrm{diam}\,H(I)\le Cm^{-1/\sup_n\alpha_n}. $
This proves~\eqref{eq:unif_contraction}. The rest of the proof that the limit~\eqref{eq:lifted_meas_def} exists and the convergence is uniform in $ n $ proceeds exactly as in~\cite[Proposition 3.3]{alvessoufi} (with $ P_n $ and $ f_n $ changed to $ T_n $ and $ \bar T_n $). In~\cite[Corollary 6.4]{singularhyperbolicchaotic} it is shown that $ \mu_n $ indeed defines a $ T_n $-invariant probability measure and that the ergodicity of $ \mu_n $ follows from the ergodicity of $ \bar\mu_n $. By~\cite[Remark 2(b)]{melbourne_butterley}, $ \mu_n $ is the unique $ T_n $-invariant ergodic probability measure such that $ \pi_* \mu_n =\bar\mu_n $.
\end{proof}
\begin{prop}\label{prop:swapped_lims}
	For all $ m\ge 1 $,
	\[ \lim_{n\to \infty}\int (\phi \circ T_n^m)^{+}d\bar \mu_n = \int (\phi \circ T_\infty^m)^{+}d\bar \mu_\infty. \]
\end{prop}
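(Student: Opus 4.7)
The plan is to decompose the difference as $A_n+B_n$, where
\[ A_n=\int_I\big[(\phi\circ T_n^m)^+-(\phi\circ T_\infty^m)^+\big]d\bar\mu_n,\qquad B_n=\int_I(\phi\circ T_\infty^m)^+d\bar\mu_n-\int_I(\phi\circ T_\infty^m)^+d\bar\mu_\infty, \]
and handle the two pieces using different tools: strong statistical stability (weak convergence of densities in $L^1(\leb)$) for $B_n$, and Proposition~\ref{prop:T_n_conv_prob} (fibrewise uniform convergence of iterates in $\bar\mu_n$-probability) together with uniform continuity of $\phi$ for $A_n$.

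For $B_n$, since $\phi$ is continuous on the compact space $M$ it is bounded, so $(\phi\circ T_\infty^m)^+$ is bounded (and measurable as the statement of Proposition~\ref{prop:lifted_meas_skew} uses it). By~\cite{korepanovlinearresponse,baladitodd}, $d\bar\mu_n/d\leb\to d\bar\mu_\infty/d\leb$ in $L^1(\leb)$, so
\[ |B_n|\le \norminf{(\phi\circ T_\infty^m)^+}\,\big\|d\bar\mu_n/d\leb-d\bar\mu_\infty/d\leb\big\|_{L^1(\leb)}\to 0. \]

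For $A_n$, the uniform continuity of $\phi$ on $M$ gives, for any $\eps>0$, a $\delta>0$ such that $d(p,q)<\delta$ implies $|\phi(p)-\phi(q)|<\eps$. Let $E_n=\{x\in I:\sup_{z\in I}d(T_n^m(x,z),T_\infty^m(x,z))\ge\delta\}$. For $x\notin E_n$ one has $|\phi(T_n^m(x,z))-\phi(T_\infty^m(x,z))|<\eps$ for every $z\in I$, and taking suprema in $z$ yields $|(\phi\circ T_n^m)^+(x)-(\phi\circ T_\infty^m)^+(x)|\le\eps$. Hence
\[ |A_n|\le \eps+2\norminf{\phi}\,\bar\mu_n(E_n). \]
By Proposition~\ref{prop:T_n_conv_prob}, $\bar\mu_n(E_n)\to 0$, so $\limsup_{n\to\infty}|A_n|\le\eps$; letting $\eps\to 0$ gives $A_n\to 0$.

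The only delicate point is that we need fibrewise convergence that is \emph{uniform in $z$} in order to pass from pointwise convergence of $T_n^m$ to convergence of the sup-over-fibre functions $(\phi\circ T_n^m)^+$; this is precisely what Proposition~\ref{prop:T_n_conv_prob} supplies, and it is why that proposition was stated in that form. Everything else reduces to a routine $\eps/2$-argument combining bounded-convergence-type control of $B_n$ with the uniform-continuity bound on $A_n$.
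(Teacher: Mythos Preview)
Your proof is correct and follows essentially the same approach as the paper: the same decomposition into the two pieces $A_n=I_n$ and $B_n=J_n$, the same use of strong statistical stability for $B_n$, and the same combination of uniform continuity of $\phi$ with Proposition~\ref{prop:T_n_conv_prob} (splitting $I$ into the bad set where the fibrewise distance is at least $\delta$ and its complement) for $A_n$.
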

\begin{proof}We proceed as in~\cite[Lemma 3.2]{alvessoufi}.
	
	Write 
	$ \int (\phi \circ T_n^m)^{+}d\bar \mu_n - \int (\phi \circ T_\infty^m)^{+}d\bar \mu_\infty=I_n+J_n $, 
	where 
	\begin{align*}
		I_n&= \int (\phi \circ T_n^m)^{+}-(\phi \circ T_\infty^m)^{+}\, d\bar \mu_n,\quad
		J_n= \int (\phi \circ T_\infty^m)^{+}d\bar \mu_n - \int (\phi \circ T_\infty^m)^{+}d\bar \mu_\infty.
	\end{align*}
Now for all $ x\in I $, 
\begin{equation}\label{eq:diff_phi_+_ineq}
	|(\phi \circ T_n^m)^{+}(x)-(\phi \circ T_\infty^m)^{+}(x)|\le \sup_{y\in I}|\phi \circ T_n^m(x,z)-\phi \circ T_\infty^m(x,z)|.
\end{equation}
Since $ M $ is compact, $ \phi $ is uniformly continuous on $ M $. Hence for any $ \eps>0 $ there exists $ \delta>0 $ such that $ |\phi(z)-\phi(z')|<\eps $ for all $ z,z'\in M $ with $ d(z,z')<\delta $. Let 
\[ S = \Big\{x\in I:\sup_{y\in I}d(T_n^m(x,y),T_\infty^m(x,y))\ge\delta\Big\}. \]
Then by~\eqref{eq:diff_phi_+_ineq},
\begin{align*}
	|I_n|&\le \int_{S} |(\phi \circ T_n^m)^{+}-(\phi \circ T_\infty^m)^{+}|\, d\bar \mu_n + \int_{I\setminus S} |(\phi \circ T_n^m)^{+}-(\phi \circ T_\infty^m)^{+}|\, d\bar \mu_n\\
	&\le 2\norminf{\phi}\bar\mu_n(S)+\eps.
\end{align*}
By Proposition~\ref{prop:T_n_conv_prob}, $ \bar \mu_n(S)\to 0 $ as $ n\to \infty $. Since $ \eps>0 $ is arbitrary, it follows that $ I_n\to 0 $.

Finally, note that
\begin{align*}
	|J_n|= \bigg|\int (\phi \circ T_\infty^m)^{+}\bigg(\frac{d\bar\mu_n}{d\leb} - \frac{d\bar\mu_\infty}{d\leb}\bigg) d\leb\bigg|\le \norminf{\phi}\bigg|\frac{d\bar\mu_n}{d\leb} - \frac{d\bar\mu_\infty}{d\leb}\bigg|_{L^1(\leb)}.
\end{align*}
Hence by strong statistical stability, $ J_n\to 0 $ as $ n\to \infty $.
\end{proof}
We can now complete the proof that $ \mu_n $ is statistically stable, i.e.\ $ \mu_n\wconv \mu_\infty $. 
	Let $ \phi:M\to \R $ be continuous. Then by Propositions~\ref{prop:lifted_meas_skew} and \ref{prop:swapped_lims},
	\[ \int_M \phi\, d\mu_\infty = \lim_{m\to \infty}\int_{I}(\phi\circ \bar T_\infty^m)^{+}d\bar\mu_\infty=\lim_{m\to \infty}\lim_{n\to\infty}\int_{I}(\phi\circ \bar T_n^m)^{+}d\bar\mu_n. \]
	Since $ \int_{I} (\phi\circ \bar T_n^m)d\bar\mu_n\to \int_M \phi\, d\mu_n$ as $ m\to\infty $ uniformly in $ n $, we can swap the limits as $ m\to \infty $ and $ n\to \infty $ in the above expression. Thus
	\[ \int_M \phi\,d\mu_\infty = \lim_{n\to \infty}\lim_{m\to\infty}\int_{I}(\phi\circ \bar T_n^m)^{+}d\bar\mu_n=\lim_{n\to \infty}\int_M \phi\, d\mu_n, \]
	as required.
\subsection{Externally forced dispersing billiards}\label{subsection:sinai_billiard_external}
A Sinai billiard table on the two-torus $ \mathbb{T}^2 $ is a set of the form $ Q=\mathbb{T}^2\setminus \cup_i B_i $ where $ \{B_i\} $ is a finite collection of open sets such that $\bar B_i \cap \bar B_j =\emptyset $ for $ i\ne j $. It is assumed that the sets $ B_i $ have $ C^3 $ boundaries with positive curvature. The billiard flow on $ Q\times S^1 $ is induced by the motion of a particle that moves in straight lines at unit speed on $ Q $ and collides elastically with the boundary $ \partial Q $. We say that the table has finite horizon if there exists a constant $ L>0 $ such that any line of length $ L $ in $ \mathbb{T}^2 $ intersects $ \partial Q $.

In~\cite{chernov_small_external_I,chernov_small_external_II} Chernov studied perturbations of the finite horizon Sinai billiard flow where a small stationary force $ F $ acts on the particle between its collisions with $ \partial Q $. We refer to~\cite[Section 2]{chernov_small_external_I} for the precise details of the model. In particular, it is assumed that the force preserves an additional integral of motion and that the phase space obtained by restricting to one of its level sets is a compact 3-dimensional manifold. 

Consider the flow obtained by restricting to one of these level sets. The assumptions then guarantee that the collision map $ T_F $ with the table can be parametrised on the same space $ M = \partial Q\times [-\pi/2,\pi/2] $ as the collision map of the unperturbed Sinai billiard flow. Let $ (F_n)_{n\in \N} $ be a sequence of admissible forces such that $ F_n\to F_{\infty}=0 $ in $ C^2 $ and define $ T_n=T_{F_n}:M\to M $. Let $ \mu_n $ denote the unique SRB measure for $ T_n $. 
\begin{prop}\label{prop:fcb_externally_forced}
	For all $ \gamma>1 $ the family $ T_n:M\to M $, $ n\in \N\cup\{\infty\} $, satisfies the Functional Correlation Bound uniformly with rate $ k^{-\gamma} $.
\end{prop}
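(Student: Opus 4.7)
The plan is to apply Theorem~\ref{thm:uniform_fcb}. By that result, it suffices to exhibit a Young tower structure (in the sense of Subsection~\ref{subsection:nuh_map_def}) for each $T_n$ such that the data $(Y_n,\alpha_n,\phi_n)$ fits into a uniform family with $O(k^{-\beta})$ tails for some $\beta>1$; since any $\gamma>1$ is allowed, it is in fact enough to produce \emph{exponential} tails uniformly in $n$.

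For the unperturbed case $F_\infty=0$, Young's construction~\cite{young1998statistical} gives a two-sided tower with hyperbolic product base $Y_\infty\subset M$, a countable Markov partition $\alpha_\infty$, a return time $\phi_\infty$ with $\mu_{Y_\infty}(\phi_\infty\ge k)\le C\lambda_0^k$, and constants $D_0,\theta\in(0,1)$ controlling the Gibbs--Markov and hyperbolicity estimates~\eqref{eq:two_sided_GM_contraction}--\eqref{eq:yt_intermediate_iterate_bd}. For the perturbed maps $T_n$ with $F_n\to 0$ in $C^2$, Chernov~\cite{chernov_small_external_I,chernov_small_external_II} carries out the same construction and proves exponential decay of correlations. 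The first step of my plan is therefore to extract from those papers a common choice of base $Y$ (or of nested bases $Y_n$ shrinking in a controlled way), adapted to both $T_\infty$ and $T_n$ for all sufficiently large $n$, on which the Young tower is built for each $T_n$. The $C^2$-smallness of $F_n$ means that the relevant singularity sets, unstable cones, expansion factors and distortion constants for $T_n$ converge to those of $T_\infty$, so $D_0$ and $\theta$ in~\eqref{eq:two_sided_GM_contraction} and~\eqref{eq:yt_intermediate_iterate_bd} can be chosen independently of $n$.

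Next I would check the uniform tail estimate. Chernov's tail bound for a fixed $F$ comes from a growth lemma for unstable curves together with a uniform escape rate from neighbourhoods of the singularity set; both estimates are obtained with constants that depend only on the $C^2$-size of $F$ and on the geometry of the table. Since the table is fixed and $\|F_n\|_{C^2}\to 0$, one obtains a uniform bound $\mu_{Y_n}(\phi_n\ge k)\le C\lambda^k$ for some $\lambda<1$ and $C>0$ independent of $n$. In particular, condition (ii) of Definition~\ref{def:nuh_family} holds for \emph{every} $\beta>1$. Condition (iii) (lower bound on $\mu_{Y_n}(\phi_n=k)$ for $k$ in a bounded set $I_n$ with bounded $\gcd$) is straightforward: for the unperturbed billiard one has a positive-measure cylinder on which $\phi_\infty$ takes some specific small value $k_0$, and the corresponding cylinder for $T_n$ persists with comparable $\mu_{Y_n}$-measure by statistical stability of the SRB measures under small $C^2$ perturbations, which is established in~\cite{chernov_small_external_II}.

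Once (i)--(iii) of Definition~\ref{def:nuh_family} are verified uniformly in $n\in\N\cup\{\infty\}$, Theorem~\ref{thm:uniform_fcb} gives the Functional Correlation Bound uniformly with rate $k^{-(\beta-1)}$, and taking $\beta$ arbitrarily large yields the claim for every $\gamma>1$. The main obstacle is the first step: the Young tower in~\cite{chernov_small_external_I,chernov_small_external_II} is constructed separately for each $F_n$, and one has to read the construction carefully to see that the base, partition, and all quantitative constants vary continuously with $F$ in the $C^2$-topology. This is a bookkeeping issue rather than a conceptual one, but it is where the real work lies; once it is done, the rest of the argument is a direct invocation of Theorem~\ref{thm:uniform_fcb}.
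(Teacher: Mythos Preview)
Your approach is precisely the one the paper considers and rejects. In the Remark immediately following the statement, the author notes that applying Theorem~\ref{thm:uniform_fcb} is in principle possible, but that ``the construction of the base of the tower in~\cite{chernov_small_external_I} is quite intricate so it seems difficult to check condition (iii) in Definition~\ref{def:nuh_family}.'' Your handling of (iii) is indeed the weak point: you assert that a cylinder with $\phi_\infty=k_0$ persists for $T_n$ with comparable $\mu_{Y_n}$-measure ``by statistical stability of the SRB measures.'' But $\mu_{Y_n}$ is not the SRB measure; it is a conditional measure on a base $Y_n$ that is itself constructed in an $n$-dependent way, and statistical stability of $\mu_n$ gives no direct control over the combinatorics of the tower or the measure of specific cylinders in the Markov partition $\alpha_n$. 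Likewise, asserting that one can choose ``a common choice of base $Y$'' for all $n$ goes well beyond what Chernov's papers provide. So as written this is a genuine gap, not mere bookkeeping.

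The paper's proof sidesteps the tower uniformity issue entirely by working instead with the standard-pair machinery. It invokes Lepp\"anen--Stenlund~\cite{leppanen2017billiards}, who prove a functional correlation bound for the Sinai billiard map directly from the growth lemma and equidistribution, and then checks (a) that separately $\eta$-H\"older functions lie in the dynamically H\"older classes $\mathcal{H}_\pm(c,\vartheta)$ with parameters uniform in $n$ (using the uniform hyperbolicity constants from~\cite{chernov_small_external_II}), and (b) that the growth lemma and equidistribution property hold with constants uniform in $n$ (citing~\cite[Proposition~5.3]{chernov_small_external_I} and~\cite[Proposition~2.2]{chernov_small_external_II}). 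This avoids any need to build or compare Young towers across the family.
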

\begin{remark}
	In principle, it should be possible to prove this proposition by verifying that the family $ T_n $ is a uniform family of nonuniformly hyperbolic maps and applying Theorem~\ref{thm:uniform_fcb}. Indeed, for each $ n $, the system $ T_n $ is modelled by a Young tower with exponential tails~\cite{chernov_small_external_I}. However, the construction of the base of the tower in~\cite{chernov_small_external_I} is quite intricate so it seems difficult to check condition (iii) in Definition~\ref{def:nuh_family}. 
\end{remark}
\begin{proof}
	In \cite{leppanen2017billiards}, Lepp\"{a}nen \& Stenlund considered the finite horizon Sinai billiard map and proved a functional correlation bound for separately dynamically H\"{o}lder functions. (Note that their definition of dynamical H\"{o}lder continuity differs from that in Section~\ref{subsection:explicit_fcb}.) Recall the definition of the past/future separation times $ s_{\pm} $ and the dynamically H\"{o}lder function classes $ \mathcal{H}_{\pm} $ from~\cite[Section 2]{leppanen2017billiards}.
	
	We first show that separately H\"{o}lder functions are separately dynamically H\"{o}lder with parameters independent of $ n $. By \cite[p.95]{chernov_small_external_II}, there exist constants $ C>0 $, $ \Lambda>1 $ independent of $ n $ such that $ d(x,y)\le C\Lambda^{-s_{+}(x,y)} $ whenever $ x,y\in M $ belong to the same local unstable manifold. Similarly, $ d(x,y)\le C\Lambda^{-s_{-}(x,y)} $ whenever $ x,y\in M $ belong to the same local stable manifold. Let $ v\in C^\eta(M) $. It follows that 
	\[ |v(x)-v(y)|\le [v]_{\eta}d(x,y)^{\eta}\le C^\eta[v]_{\eta}(\Lambda^{-\eta})^{s_{+}(x,y)} \]
	whenever $ x $ and $ y $ belong to the same local unstable manifold. Hence $ v\in \mathcal{H}_{+}(C^\eta[v]_{\eta},\Lambda^{-\eta}) $. Similarly, $ v\in \mathcal{H}_{-}(C^\eta[v]_\eta,\Lambda^{-\eta}) $. Let $ G:M^q\to\R $, $ q\ge 1 $ be separately $ \eta $-H\"{o}lder and set $ c=C^\eta \max_i [G]_{\eta,i} $, $ \vartheta =\Lambda^{-\eta} $. Then $ G(x_0,\dots,x_{i-1},\cdot,x_{i+1},\dots,x_{q-1})\in \mathcal{H}_{-}(c,\vartheta)\cap \mathcal{H}_{+}(c,\vartheta) $ for all $ x_0,\dots,x_{q-1}\in M $, $ 0\le i<q $.
	
	It remains to explain why the arguments used in the proof of~\cite[Theorem 2.4]{leppanen2017billiards} go through with system constants $ M_0, M_1 $ and $ \theta_0$,  $\theta_1$ uniform in $ n $. The result then follows by applying~\cite[Theorem 2.4]{leppanen2017billiards} with $ K=2 $, $ F=G $ and $ c,\vartheta $ as defined above.
	
	Note that~\cite[Lemma 4.1]{leppanen2017billiards} merely gives the usual decomposition of $ \mu $ into a standard family. Let $ \{(\xi_q,\nu_q):q\in \mathcal{Q}\} $ be as defined in that lemma. By~\cite[p.\ 96]{chernov_small_external_II}, $ \{(\xi_q,\nu_q):q\in\mathcal{Q}\} $ is a proper standard family. In particular, there exists a constant $ M_1>0 $ independent of $ n $ such that $ \lambda(\{q\in \mathcal{Q}:|\xi_q|\le \eps\})\le M_1\eps $ for all $ \eps>0 $. By~\cite[p.171]{chernov_markarian}, it follows that there exists a constant $ M'_1 $ independent of $ n $ such that $ \int_{\mathcal{Q}}|\xi_q|^{-1}d\lambda(q)\le M'_1 $, so~\cite[Lemma 4.3]{leppanen2017functional} goes through. Finally, it follows from the growth lemma (\cite[Proposition 5.3]{chernov_small_external_I}) and the equidistribution property (\cite[Proposition 2.2]{chernov_small_external_II}) that~\cite[Lemma 4.2]{leppanen2017billiards} goes through with constants $ a_0 $, $ M_0 $ and $ \theta_0 $ that are uniform in $ n $. The rest of the proof of~\cite[Theorem 2.4]{leppanen2017billiards} proceeds exactly as in~\cite{leppanen2017billiards}.
\end{proof}
We finish this subsection by showing that condition~\eqref{eq:conv_correlation_fns} is satisfied. It follows that Theorem~\ref{thm:homog_from_fcb_family} applies to the family $ T_n $. 
\begin{prop}\label{prop:conv_correlations_forced}
Condition~\eqref{eq:conv_correlation_fns} is satisfied.
\end{prop}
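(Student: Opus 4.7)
Rather than attempting to verify the sufficient conditions (A1)-(A2) from Remark~\ref{rk:sufficient_conditions_conv}---which would require a stronger form of statistical stability than Keller-Liverani directly provides---the plan is to prove~\eqref{eq:conv_correlation_fns} directly via a transfer-operator argument. Let $L_n$ denote the transfer operator of $T_n$ with respect to a common reference measure $m$ on $M$ (e.g.\ a Lebesgue volume), and write $h_n=d\mu_n/dm$ for the invariant density of $T_n$. Then for $v,w\in \holsp(M)$,
\[ \int_M v\,(w\circ T_n^j)\,d\mu_n=\int_M L_n^j(vh_n)\,w\,dm, \]
so it suffices to show that $L_n^j(vh_n)\to L_\infty^j(vh_\infty)$ in a sense compatible with pairing against $w$.

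The Keller-Liverani analysis carried out by Chernov in~\cite{chernov_small_external_I,chernov_small_external_II} is performed on a pair of anisotropic Banach spaces $\cB_s\hookrightarrow \cB_w$ of distributions on $M$, and it provides: (i) uniform bounds $\sup_{n,k}\norm{L_n^k}_{\cB_w\to \cB_w}<\infty$ together with uniform Lasota-Yorke inequalities on $\cB_s$; (ii) $\norm{L_n-L_\infty}_{\cB_s\to \cB_w}\to 0$; (iii) $\sup_n \norm{h_n}_{\cB_s}<\infty$ and $\norm{h_n-h_\infty}_{\cB_w}\to 0$; and (iv) H\"older functions act as bounded multipliers on both $\cB_s$ and $\cB_w$ and lie in the predual of $\cB_w$. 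I would then use the telescoping identity
\[ L_n^j(vh_n)-L_\infty^j(vh_\infty)=L_n^j\bigl(v(h_n-h_\infty)\bigr)+\sum_{k=0}^{j-1}L_n^{j-1-k}(L_n-L_\infty)L_\infty^k(vh_\infty) \]
to deduce that $L_n^j(vh_n)\to L_\infty^j(vh_\infty)$ in $\cB_w$: the first summand vanishes because $v(h_n-h_\infty)\to 0$ in $\cB_w$ by (iii)-(iv) and $L_n^j$ is uniformly bounded on $\cB_w$ by (i); each remaining summand vanishes because $L_\infty^k(vh_\infty)$ is bounded in $\cB_s$ uniformly for $k\le j$, so (ii) gives $(L_n-L_\infty)L_\infty^k(vh_\infty)\to 0$ in $\cB_w$, and applying $L_n^{j-1-k}$ preserves this. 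Pairing the resulting weak convergence against $w\in \holsp(M)$ completes the proof.

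The main obstacle is bookkeeping in the specific Banach spaces of~\cite{chernov_small_external_I,chernov_small_external_II}: one must unpack the definitions to verify (iv) with multiplier and pairing constants controlled by $\dhnorm{\cdot}$. These are the same technical facts that underpin the uniform \fcb{} established in Proposition~\ref{prop:fcb_externally_forced}, but they must be made explicit for the argument above. An alternative, working entirely within Chernov's standard-pair formalism, would use the uniform standard-family representation of $\mu_n$ (indicated in the proof of Proposition~\ref{prop:fcb_externally_forced}) together with Chernov's coupling lemma to compare $(T_n^j)_*\mu_n$ and $(T_\infty^j)_*\mu_\infty$ directly and thereby avoid anisotropic spaces entirely.
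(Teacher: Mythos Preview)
Your high-level strategy---rewrite the correlation as a transfer-operator pairing, telescope $L_n^j-L_\infty^j$, and invoke Keller--Liverani-type bounds $\norm{L_n-L_\infty}_{\cB_s\to\cB_w}\to 0$ together with uniform bounds on $\cB_w$ and multiplier/pairing properties of H\"older functions---is exactly the route the paper takes. The decomposition into $I_n=L_n^j(\nu_n-\nu_\infty)$ and $J_n=\sum L_n^{j-1-k}(L_n-L_\infty)L_\infty^k\nu_\infty$ is identical.

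Two genuine issues need fixing, though. First, the anisotropic Banach space framework you need is \emph{not} in Chernov's papers \cite{chernov_small_external_I,chernov_small_external_II}; those use standard pairs and coupling, and the proof of Proposition~\ref{prop:fcb_externally_forced} likewise stays in that formalism. The Keller--Liverani analysis on suitable spaces $\cB\hookrightarrow\cB_w\hookrightarrow (C^\eta)'$ for this class of billiards is due to Demers--Zhang \cite{demers_zhang_perturbations}, and the paper draws properties (i)--(v) directly from there. Second, your formulation with densities $h_n=d\mu_n/dm$ is problematic: the externally forced maps are generally dissipative, so the SRB measures $\mu_n$ need not be absolutely continuous with respect to any common reference measure. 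The correct object is $\nu_n=v\mu_n$ viewed as an element of $\cB$ (measures are elements of the anisotropic space, not densities), and the pairing $\mathcal L_n^k\nu_n(w)=\mu_n(v\,w\circ T_n^k)$ is what one passes to the limit in $\cB_w$. With these two corrections your argument becomes the paper's proof verbatim.
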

\begin{proof}
	For $ n\in\N\cup\{\infty\} $, Demers \& Zhang~\cite{demers_zhang_perturbations} considered the action of the transfer operator $ \mathcal{L}_n $ associated with $ T_n $ on certain spaces of distributions. In particular, if $ \nu $ is a finite signed measure, then $ \mathcal{L}_n \nu=(T_n)_{*}\nu $.
		
		 Fix $ \eta\in (0,1] $. The article~\cite{demers_zhang_perturbations} constructs Banach spaces $ (\mathcal{B},\norm{\cdot}_{\mathcal{B}}) $ and $ (\mathcal{B}_w,\norm{\cdot}_{\mathcal{B}_w}) $ with the following properties: 
		\begin{enumerate}
			\item There is a sequence of continuous embeddings $ \mathcal{B}\hookrightarrow \mathcal{B}_w \hookrightarrow (C^\eta(M))'$.
			\item For each $ n $, $ \mathcal{L}_n $ is a well-defined bounded linear operator on both $ \mathcal{B} $ and $ \mathcal{B}_w $. Moreover, $ \sup_n \norm{\mathcal L_n}_{\mathcal{B}_w}<\infty $.
			\item (\cite[Theorem 2.2]{demers_zhang_perturbations}) For each $ n $, we have $ \mu_n \in \mathcal{B} $ and $ \mu_n $ is the unique element of $ \mathcal{B} $ such that $ \mathcal{L}_n \mu_n = \mu_n $ and $ \mu_n(1)=1 $.
			\item (\cite[Theorem 2.11]{demers_zhang_perturbations}) $ \norm{\mathcal{L}_n - \mathcal{L}_\infty}_{\mathcal{B}\to \mathcal{B}_w}\to 0 $ as $ n\to\infty $. By~\cite[Theorem 2.1]{demers_zhang_perturbations}, it follows that $ \mu_n \to \mu_\infty $ in $ \mathcal{B}_w $.
			\item (\cite[Lemma 5.3]{demers_zhang_perturbations}) Let $ v\in C^\eta(M) $. Then $ v $ is a bounded multiplier on $ \mathcal{B} $ (that is, $ h\mapsto vh $ is a well-defined bounded operator on $ \mathcal{B} $). Moreover, $ v $ is a bounded multiplier on $ \mathcal{B}_w $.\footnote{Since the definitions of the weak norm and the strong stable norm are similar, this follows easily from the arguments used to bound the strong stable norm at the beginning of the proof of~\cite[Lemma~5.3]{demers_zhang_perturbations}. }
		\end{enumerate}
	Fix $ v\in C^\eta(M) $ and $ k\ge 0 $. For $ n\in\N\cup\{\infty\} $, define a signed probability measure $ \nu_n $ by $ \nu_n=v\mu_n $. Then $ \mathcal{L}_n^k \nu_n(w)=\nu_n(w\circ T_n^k)=\mu_n(vw\circ T_n^k) $ for all $ w\in C^\eta(M) $, so it sufficient to prove that $ \mathcal{L}_n^k \nu_n \to\mathcal{L}_\infty^k \nu_\infty $ in $ (C^\eta(M))' $. Now $ \mu_n\in\mathcal{B} $ so by properties (v) and (ii) we have $ \nu_n\in \mathcal{B} $ and $ \mathcal{L}_n^k \nu_n\in \mathcal{B} $. Hence by (i), it suffices to show that $ \mathcal{L}_n^k \nu_n\to \mathcal{L}_\infty^k \nu_\infty $ in $ \mathcal{B}_w $. 
	
	Write $\mathcal{L}_n^k \nu_n -\mathcal{L}_\infty^k \nu_\infty=I_n+J_n $, where $ I_n = \mathcal{L}^k_n (\nu_n - \nu_\infty) $ and 
\begin{align*}
	J_n = \mathcal{L}^k_n\nu_\infty-\mathcal{L}^k_\infty\nu_\infty= \sum_{j=0}^{k-1}\mathcal{L}_n^j(\mathcal{L}_n - \mathcal{L}_\infty)\mathcal{L}_\infty^{k-j-1} \nu_\infty.
\end{align*}
By properties (ii) and (v),
\begin{align*}
	\norm{I_n}_{\mathcal{B}_w}\le \norm{\mathcal{L}^k_n}_{\mathcal{B}_w}\norm{v\mu_n - v\mu_\infty}_{\mathcal{B}_w}\le C\norm{\mu_n - \mu_\infty}_{\mathcal{B}_w},
\end{align*}
where $ C $ depends only on $ v $. By (iv), it follows that $ \lim_{n\to \infty}\norm{I_n}_{\mathcal{B}_w}=0$. Now by property (ii), there exists a constant $ C'>0 $ such that
\begin{align*}
	\norm{J_n}_{\mathcal{B}_w}&\le \sum_{j=0}^{k-1} \norm{\mathcal{L}^j_n}_{\mathcal{B}_w}\norm{\mathcal{L}_n - \mathcal{L}_\infty}_{\mathcal{B}\to \mathcal{B}_w}\norm{\mathcal{L}_\infty^{k-j-1}}_{\mathcal{B}} \norm{\nu_\infty}_{\mathcal{B}}\\
	&\le C'\norm{\mathcal{L}_n - \mathcal{L}_\infty}_{\mathcal{B}\to \mathcal{B}_w}\!\norm{\nu_\infty}_{\mathcal{B}}.
\end{align*}
Hence by (iv), $ \lim_{n\to \infty}\norm{J_n}_{\mathcal{B}_w}=0 $, which completes the proof.
\end{proof}
\begin{remark}
	Note that we have not used any facts about the anisotropic Banach spaces in~\cite{demers_zhang_perturbations} apart from properties (i)-(v). These properties arise naturally in situations where statistical stability is proved by Keller-Liverani perturbation theory (see e.g.~\cite{gouezel_liverani_anisotropic,demers_liverani_pw_hyperbolic}). 
\end{remark}
%

\appendix
\section{A limit theorem for triangular arrays of random vectors}
In this appendix, we state and prove an iterated WIP for triangular arrays of random vectors. Our assumptions are similar to those of Lyapunov's classical central limit theorem.
\begin{lemma}\label{lemma:lindeberg_wip}
	Fix $d\ge 1$. Let $(\chi_{n,i})_{n\ge 1,1\le i\le k_n}$ be an array of mean zero $\R^d$-valued random vectors such that $(\chi_{n,i})_{1\le i\le k_n}$ are independent for each $n\ge 1$. Suppose that 
	\begin{enumerate}[label=(\roman*)]
		\item There exists a matrix $\Sigma\in \R^{d\times d}$ such that for all $ t\in [0,1], $
		\[\lim_{n\rightarrow \infty}\Ex{\sum_{i=1}^{[tk_n]}\chi_{n,i}\otimes \sum_{i=1}^{[tk_n]}\chi_{n,i}}=t\Sigma.\]
		\item There exists $p>1$ such that $\lim_n \sum_{i=1}^{k_n}\Ex{|\chi_{n,i}|^{2p}}=0.$
	\end{enumerate}
	Define \cadlag{} processes $\shat{W}_n\in D([0,1],\R^d) ,\shat{\bbW}_n\in D([0,1],\R^{d\times d})$ by
	\begin{equation*}
		\shat W_n(t)=\sum_{1\le i\le[tk_n]}\chi_{n,i}\quad \text{and}\quad\shat{\bbW}_n(t)=\sum_{1\le i<j\le  [tk_n]}\chi_{n,i}\otimes \chi_{n,j}.
	\end{equation*}
	Then $(\shat W_n,\shat\bbW_n)\wconv (W,\int W\otimes dW)$ in $D([0,1],\R^d \times \R^{d\times d})$ with sup-norm topology, where $W$ is a Brownian motion with covariance $\Sigma$.
\end{lemma}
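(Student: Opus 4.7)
The plan is to combine a Lindeberg--Feller vector-valued functional CLT for $\widehat W_n$ alone with a Kurtz--Protter-type continuity theorem for the stochastic integral $\widehat\bbW_n = \int \widehat W_n \otimes d\widehat W_n$. I would proceed in four steps.

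\textbf{Step 1 (Marginal FCLT).} First, I would establish $\widehat W_n \wconv W$ in $D([0,1],\R^d)$ with sup-norm topology, where $W$ is a Brownian motion with covariance $\Sigma$. By the Cram\'er--Wold device this reduces to the scalar FCLT for each linear combination $\langle \theta, \widehat W_n\rangle$: hypothesis (i) supplies the covariance profile $t\,\theta^T \Sigma \theta$, and hypothesis (ii) supplies the Lindeberg condition via the elementary interpolation
\[
\sum_{i=1}^{k_n} \Ex{|\chi_{n,i}|^2 \mathbf{1}_{\{|\chi_{n,i}| > \eps\}}} \le \eps^{-2(p-1)} \sum_{i=1}^{k_n} \Ex{|\chi_{n,i}|^{2p}} \to 0, \qquad \eps > 0,
\]
where $p > 1$ is used crucially. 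The same bound gives $\max_i |\chi_{n,i}| \pconv 0$, so the maximum jump of $\widehat W_n$ tends to $0$ in probability.

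\textbf{Step 2 (Identification as an It\^o integral).} Since $\widehat W_n$ is a pure-jump martingale with jump $\chi_{n,i}$ at time $i/k_n$, the It\^o integral of $\widehat W_n$ against itself is a discrete sum, giving
\[
\int_0^t \widehat W_n(s-) \otimes d\widehat W_n(s) = \sum_{1 \le j < i \le [tk_n]}\chi_{n,j} \otimes \chi_{n,i} = \widehat\bbW_n(t).
\]

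\textbf{Step 3 (Kurtz--Protter).} I would then invoke a standard continuity-of-stochastic-integration theorem of Kurtz--Protter type, with $\widehat W_n$ as both integrand and integrator. Its hypotheses are Skorohod convergence $\widehat W_n \wconv W$ (Step 1) and the uniformly-controlled-variations property, which for the martingales $\widehat W_n$ reduces to the uniform bound $\sup_n \Ex{[\widehat W_n,\widehat W_n]_1} = \sup_n \sum_i \Ex{|\chi_{n,i}|^2} < \infty$; this is immediate by taking the trace in hypothesis (i) with $t = 1$. The conclusion is the joint Skorohod convergence $(\widehat W_n, \widehat\bbW_n) \wconv (W, \int W \otimes dW)$ in $D([0,1], \R^d \times \R^{d \times d})$. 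Since the limit process has continuous sample paths almost surely, this upgrades automatically to weak convergence in the sup-norm topology.

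\textbf{Main obstacle and alternative.} The technically delicate point is packaging hypotheses (i)--(ii) into the precise hypotheses of Kurtz--Protter for tensor-valued integrands. A more self-contained alternative would decompose
\[
\widehat\bbW_n(t) = \tfrac{1}{2}\bigl(\widehat W_n(t) \widehat W_n(t)^T - D_n(t)\bigr) + \tfrac{1}{2}\bigl(\widehat\bbW_n(t) - \widehat\bbW_n(t)^T\bigr),
\]
where $D_n(t) = \sum_{i \le [tk_n]}\chi_{n,i}\chi_{n,i}^T$. The symmetric part is then handled by continuous mapping together with a law of large numbers $D_n(t) \pconv t\Sigma$ (for which one must extract $L^q$ control on $|\chi_{n,i}|^2$ from hypothesis (ii), the subtle case being $p$ close to $1$), while the antisymmetric part is a martingale (the discrete L\'evy area) to which the martingale FCLT applies directly, its quadratic variation being controlled again by hypothesis (i). Either route delivers the stated joint convergence.
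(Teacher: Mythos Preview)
Your approach is essentially the same as the paper's: establish the marginal FCLT $\widehat W_n \wconv W$, then invoke the Kurtz--Protter continuity theorem for stochastic integrals (using boundedness of the expected quadratic variation from hypothesis~(i)), and finally upgrade from Skorohod to sup-norm topology via continuity of the limit. The only minor difference is that the paper obtains the marginal FCLT by verifying the hypotheses of Whitt's martingale FCLT directly---in particular showing $[\widehat W_n^a,\widehat W_n^b](t)\to t\Sigma_{ab}$ in $L^p$ via a von Bahr--Esseen argument---whereas you go through Cram\'er--Wold and the scalar Lindeberg--Feller FCLT; both routes are valid and lead to the same conclusion.
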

\begin{proof}
	For $ n\ge 1 $, $ t\in [0,1] $ let $ \mathcal{F}_t^n $ be the $ \sigma $-algebra generated by $ \chi_{n,1},\ldots,\chi_{n,[tk_n]} $. Then $ (\mathcal{F}^n_t)_{t\in[0,1]} $ forms a filtration and $ \shat{W}_n $ is an $  (\mathcal{F}^n_t)_{t\in[0,1]} $-martingale.
	
	We first show that $ \shat W_n \wconv W $ by verifying the hypotheses of \cite[Theorem~2.1]{whittmartingalefclt}. 	Let $ \varepsilon>0 $. Note that
	\begin{align}\label{eq:lindeberg_condition}
		\sum_{i=1}^{k_n}\Ex{|\chi_{n,i}|^2\indic{|\chi_{n,i}|>\varepsilon}}\le \sum_{i=1}^{k_n}\frac{1}{\varepsilon^{2p-2}}\Ex{|\chi_{n,i}|^{2p}\indic{|\chi_{n,i}|>\varepsilon}}\longrightarrow 0
	\end{align}
	by Lemma~\ref{lemma:lindeberg_wip}(ii). Now for all $ n\ge 1 $, $ 1\le i\le k_n $ we have
	\begin{align*}
		|\chi_{n,i}|^2\le \varepsilon^2 +|\chi_{n,i}|^2\indic{|\chi_{n,i}|^2>\varepsilon}.
	\end{align*}
	Hence 
	\begin{align*}
		\bigg(\mathbb{E}\bigg[\max_{1\le i\le k_n} |\chi_{n,i}|\bigg]\bigg)^2\le\Ex{\max_{1\le i\le k_n} |\chi_{n,i}|^2}&\le \varepsilon^2 + \Ex{\max_{1\le i\le k_n} |\chi_{n,i}|^2\indic{|\chi_{n,i}|^2>\varepsilon}}\\
		&\le \varepsilon^2 + \Ex{\sum_{i=1}^{k_n}|\chi_{n,i}|^2\indic{|\chi_{n,i}|>\varepsilon}}.
	\end{align*}
	Hence by \eqref{eq:lindeberg_condition},
	$ \limsup_n \Ex{\max_{1\le i\le k_n} |\chi_{n,i}|}\le \varepsilon. $ Since $ \varepsilon>0 $ is arbitrary, it follows that $ \limsup_n \Ex{\max_{1\le i\le k_n} |\chi_{n,i}|}=0$.
	
	Fix $ t\in [0,1] $ and $ 1\le a,b\le d $. Let $ [\shat{W}^a_n,\shat W_n^b](t) $ denote the quadratic covariation of $ \shat W^a_n $ and $ \shat W^b_n $ at time $ t $. By condition (i) of Lemma~\ref{lemma:lindeberg_wip},
	\begin{align}\label{eq:limiting_ex_chi_dot_chi_sum}
		\lim_{n\rightarrow \infty}\Ex{[\shat{W}^a_n,\shat W_n^b](t)}=\lim_{n\rightarrow \infty}\mathbb{E}\Biggl[\sum_{i=1}^{[tk_n]}\chi^a_{n,i} \chi^b_{n,i}\Biggr]=t\Sigma_{ab}.
	\end{align}
	Let $ 1<p<2 $ be as in Lemma~\ref{lemma:lindeberg_wip}. By von Bahr-Esseen's inequality, 
	\begin{align*}
		\Ex{\Biggl|\sum_{i=1}^{[tk_n]}(\chi^a_{n,i}\, \chi^b_{n,i}-\Ex{\chi^a_{n,i}\, \chi^b_{n,i}}) \Biggr|^{p}}&\ll \sum_{i=1}^{[tk_n]}\Ex{\big|\chi^a_{n,i}\, \chi^b_{n,i}-\Ex{\chi^a_{n,i}\, \chi^b_{n,i}}\! \big|^{p}}.
	\end{align*}
	Now, 
	\begin{align*}
		\Ex{\big|\chi^a_{n,i}\, \chi^b_{n,i}-\Ex{\chi^a_{n,i}\, \chi^b_{n,i}}\! \big|^{p}}&\le 2^{p}\Ex{|\chi^a_{n,i}\, \chi^b_{n,i}|^{p}}\le 2^{p}\Ex{|\chi_{n,i}|^{2p}}.
	\end{align*}
	By condition (ii) of Lemma~\ref{lemma:lindeberg_wip}, it follows that $ \sum_{i=1}^{[tk_n]}(\chi^a_{n,i}\, \chi^b_{n,i}-\Ex{\chi^a_{n,i}\, \chi^b_{n,i}})\rightarrow 0 $ in $ L^p $. Hence by \eqref{eq:limiting_ex_chi_dot_chi_sum}, 
	\begin{align}\label{eq:quadratic_covar_conv_lp}
		[\shat{W}^a_n,\shat W_n^b](t)=\sum_{i=1}^{[tk_n]}\chi^a_{n,i}\, \chi^b_{n,i}\longrightarrow t\Sigma_{ab}\text{ in }L^p.
	\end{align}
	Hence we have verified the hypotheses of \cite[Theorem~2.1(i)]{whittmartingalefclt}, it follows that $ \shat W_n\wconv W $ in $ D([0,1],\R^d). $
	
	We complete the proof of this lemma by verifying the hypotheses of~\cite[Theorem 2.2]{kurtzprotter} (with $ \delta=\infty $ and $ A_n\equiv 0 $). By the continuous mapping theorem, $ (\shat W_n,\shat W_n)\wconv (W,W) $ in $ D([0,1],\R^d\times \R^d)$. By \eqref{eq:quadratic_covar_conv_lp}, $ \sup_n \Ex{[\shat W_n^a,\shat W_n^a](t)}<\infty $ for all $ t\in[0,1] $ and $ 1\le a\le d $, so condition C2.2(i) in~\cite[Theorem~2.2]{kurtzprotter} is satisfied. Hence, by \cite[Theorem~2.2]{kurtzprotter} it follows that $ (\shat W_n,\shat \bbW_n)\wconv {(W,\int W\otimes dW)} $ in $ D([0,1],\R^d\times \R^{d\times d}) $ with Skorohod topology. Since $ (W, \int W\otimes dW) $ has continuous sample paths, by \cite[Section 15]{billingsley_conv_prob} we also have weak convergence in the sup-norm topology.
\end{proof}
\bibliographystyle{alpha}
\bibliography{fcb_homogenisation_families}
\end{document}